\newtheorem{theorem}{Theorem}
\newtheorem{lemma}{Lemma}
\theoremstyle{definition}
\DeclareMathOperator{\tr}{tr}
\def\beqn{\begin{eqnarray*}}
\def\eeqn{\end{eqnarray*}}
\def\beq{\begin{eqnarray}}
\def\eeq{\end{eqnarray}}
\def\bm#1{\mbox{\boldmath{$#1$}}}
\newcommand{\diag}{{\text{diag}}}
\begin{document}

\title{\Large\bfseries Adaptive empirical Bayesian smoothing splines}
\author{\bfseries Paulo Serra and Tatyana Krivobokova\\ \vspace{-0.75em} {\rm Georg-August-Universit\"at G\"ottingen}}
\date{\today}

\maketitle

\begin{abstract}
In this paper we develop and study adaptive empirical Bayesian smoothing splines.
These are smoothing splines with both smoothing parameter and penalty order determined via the empirical Bayes method from the marginal likelihood of the model.
The selected order and smoothing parameter are used to construct adaptive credible sets with good frequentist coverage for the underlying regression function.
We use these credible sets as a proxy to show the superior performance of
adaptive empirical Bayesian smoothing splines compared to frequentist smoothing splines.
\end{abstract}

{{\bf Keywords:}
Adaptive estimation;
Unbiased risk minimiser;
Maximum likelihood;
Oracle parameters
}

\section[Introduction]{Introduction}\label{sec:introduction}%

Consider $n$ observations from the non-parametric regression model
\begin{equation}\label{eq:model}
Y_i = f(x_i) + \sigma \epsilon_i, \quad i=1,\dots, n.
\end{equation}
The function $f$ is assumed to belong to a Sobolev class $\mathcal{W}_\beta(M)$, a collection of continuous functions $f\in L_2$ such that $f^{(\beta-1)}$ is absolutely continuous and $\|f^{(\beta)}\|^2<M^2$, where $\|\cdot\|$ is the $\ell_2$-norm.
The design points $\bm{x}=(x_1,\dots,x_n)\in[0,1]^n$ are $x_i=(2i-1)/(2n)$, the observation errors $\epsilon_1, \dots, \epsilon_n$ are assumed to be i.i.d.\ standard Gaussian random variables and $\sigma^2>0$.
Parameters $f$, $\beta$, and $\sigma^2$ are unknown and of interest.

In this paper we study a smoothing spline estimator for $f$, which is
the unique minimiser in $\mathcal{W}_q$ of the penalised least squares criterion
\begin{equation}\label{eq:criterion}
\frac1n\sum_{i=1}^n\big\{Y_i - f(x_i)\big\}^2 + \lambda\int_0^1 \big\{f^{(q)}(t)\big\}^2\,dt,\;\;\lambda>0,\;\;q\in\mathbb{N}
\end{equation}
and is well known to be a natural polynomial smoothing spline of
degree $2q-1$ with knots at the observation points; see \cite{Wahba:1990}.

The performance of smoothing splines as data-smoothers crucially
depends on the choice of the smoothing parameter $\lambda$, which
balances fidelity to the data and smoothness of the estimator.  
Criteria to select such a smoothing parameter can be obtained under two paradigms, which correspond to making different assumptions on the data-generating mechanism.
One possibility is to assume that the regression function $f$ is some fixed function from a certain class (frequentist model).
In this case $\lambda$ is estimated by minimising an unbiased
estimator of the mean integrated squared error (unbiased risk estimator).
Generalised cross validation (GCV), Mallow's $C_p$ and Akaike's
information criterion are all asymptotically equivalent criteria of
this type. In the following $\hat\lambda_f$ denotes a minimiser of
one of these criteria.

Another possibility is to assume that the regression function $f$ is a realisation of some stochastic process (Bayesian model).
Here a conjugate prior is put on the regression function,  
 such that the
resulting posterior mean coincides with the smoothing spline estimator.
The smoothing parameter $\lambda$ is then a so-called
\emph{hyper-parameter} of the prior. Its estimator is set to a maximiser of the
resulting marginal likelihood (empirical Bayes method) and will be
denoted by $\hat\lambda_q$. 

The prior we use is a 
Gaussian prior.
For these, conjugacy properties are often explored to directly study the posterior and specific Bayes estimators.
Properties of Gaussian process priors (not necessarily conjugate) can be found in~\cite{Vaart:2008a};
modifications to obtain adaptive priors were proposed in~\cite{Vaart:2009}.
A (by no means extensive) list of results on adaptation using Gaussian priors in regression and the closely related Gaussian white noise model include~\cite{Belitser:2003,Belitser:2010,Knapik:2011,Knapik:2013,deJonge:2010,deJonge:2012,Szabo:2013}.

Bayesian smoothing splines with Gaussian priors have been first considered in
\cite{Kimeldorf:1970}. Extensions and modifications of these splines
have been discussed e.g. in \cite{Kohn:1987}, \cite{Speckman:2003} or \cite{Yue:2014}.

The asymptotic distributions of $\hat\lambda_f$ and $\hat\lambda_q$ can be computed under the assumption that the data come from the frequentist model with $f$ as a fixed, ``true'' regression function of interest.
This allows a direct comparison of these two estimators obtained under different paradigms.
\cite{Krivobokova:2013} shows that $\hat\lambda_f$ and $\hat\lambda_q$
are consistent for certain oracles and that 
the asymptotic variance of $\hat\lambda_f$ can be several times larger than that of $\hat\lambda_q$.

The literature on adaptive Bayesian nonparametric estimation in non-parametric regression, and their frequentist performance is already quite extensive.
For general priors, sufficient conditions for so called posterior contraction were proposed in~\cite{Ghosal:2000,Shen:2001,Ghosal:2007} -- 
posterior contraction at a given rate ensures the existence of frequentist estimators with the same rate.
Adaptation is usually achieved by considering a family of priors indexed by a hyper-parameter (like $\lambda$ and $q$ above).
If the regression function $f$ belongs to a given smoothness class and
there is some value of the hyper-parameter such that the resulting posterior contracts at the minimax rate for $f$ in that class, then either endowing the hyper-parameter with a prior (hierarchical Bayes), or picking the hyper-parameter in a data-driven way (empirical Bayes), can lead to posteriors that contract adaptively.
This approach has be used in~\cite{Belitser:2003b,Zhang:2005,Johnstone:2005,McAuliffe:2006,Ghosal:2008,Knapik:2013,Shen:2015}, among others.
Here we consider the empirical Bayes approach.

Smoothing parameters that minimise an unbiased risk
estimator (e.g., GCV) are predominant in practice and known to have good theoretical properties.
In particular, if there is a mismatch between the order of the spline ($2q-1$) and
the smoothness of the regression function ($f$ admits more than just
$q$ square integrable derivatives), then $\hat\lambda_f$ adapts to
this extra smoothness, but only up to $2q$.
In contrast, $\hat\lambda_q$ does not adapt and its rate is determined
by $q$ only.
The main question that we address in this paper is whether one can
obtain a \emph{selector} $\hat q$, such that the resulting
$\hat\lambda_{\hat q}$ not only adapts to the underlying smoothness of $f$,
but also outperforms $\hat\lambda_f$ due to a much smaller variance.

To derive such a selection criterion for $q$ we use the fact that the
prior distribution depends on $q$, albeit in an implicit way, and apply the empirical Bayes approach.
Contrary to the selection of the smoothing parameter $\lambda$, the
selection of the order of smoothing splines has received very little attention in the literature.

Since $\hat\lambda_f$ and $\hat\lambda_{\hat q}$ are associated with splines of different order, direct comparison between the two smoothing parameters is not adequate.
Instead, we construct credible $\ell_2$ balls with good frequentist
coverage, obtained from a high probability region of the posterior
corresponding to hyper-parameters $\hat q$ and $\hat\lambda_{\hat q}$
selected via empirical Bayes.
Subsequently, we show that if the center of this ball is replaced by a
smoothing spline with the smoothing parameter $\hat\lambda_f$, then the
coverage property is lost, proving superiority of adaptive
empirical Bayesian smoothing splines.

This paper is structured as follows.
In Section~\ref{sec:empirical_bayes} we describe the empirical Bayes approach and define some estimators.
The asymptotic behaviour of the estimators is described in Section~\ref{sec:asymptotics}.
In Section~\ref{sec:coverage} we establish frequentist properties of a specific type of Bayesian credible set.
In Section~\ref{sec:GCV} we compare our approach for the selection of the smoothing parameter with generalised cross validation.
Some numerical experiments can be found in Section~\ref{sec:simulations}.
Section~\ref{sec:conclusions} contains some conclusions.
We refer the reader to Section~\ref{appendix:technical_results} for technical details and proofs.\\

\section[Empirical Bayesian smoothing splines]{Empirical Bayesian smoothing splines}\label{sec:empirical_bayes}%

Let us denote the minimiser of (\ref{eq:criterion}) by
$\hat{f}_{\lambda,q}$, which is a natural smoothing spline of degree
$2q-1$ with knots at the observation points. This smoothing spline estimator is a linear
estimator that satisfies for each $\lambda$ and $q$ 
\begin{equation}\label{eq:smoothing_spline_at_design}
\bm{\hat{f}}_{\lambda,q} = \hat{f}_{\lambda,q}(\bm{x}) = \bm{S}_{\lambda,q}\bm{Y}.
\end{equation}
The positive-definite smoother matrix
$\bm{S}_{\lambda,q}\in\mathbb{R}^{n\times n}$ is known explicitly and
the vector $\bm{Y} = (Y_1,\dots,Y_n)^T$ comprises of the observations from model~\eqref{eq:model}.

The $L_2$-risk of the smoothing
spline estimator~\eqref{eq:smoothing_spline_at_design} of
$f\in\mathcal{W}_\beta$ is a function of $\lambda$ and $q$ known asymptotically; see~\cite{Wahba:1990}.
In particular, with a suitable $\lambda$ that minimises that risk, it
holds for each $q$
\begin{equation}\label{eq:risk}
\mathbb{E}\|\bm{\hat f}_{\lambda,q} - \bm{f}\|^2 \asymp n^{-\frac{2\min(\beta,q)}{2\min(\beta,q)+1}}.
\end{equation}
This is of the order of the minimax risk for estimating $f\in\mathcal{W}_{\min(\beta,q)}$ in model~\eqref{eq:model}.
Apparently, to minimise this risk $q$ should be larger
than the maximum smoothness of $f$. This fact is known in an wider context
of Tikhonov regularisation, see \cite{Lukas:1988} and references
therein. However, we are not aware of any practical methods to select
the optimal penalisation order $q$.  
In this work, estimates for both $\lambda$ and $q$ are obtained via the empirical Bayes method.
We start by specifying a prior on the regression function $f$ and on $\sigma^2$.

Given $(\bm{x},\lambda,q)$ we endow $\sigma^2$ with an inverse-gamma prior with shape parameter $a$, and scale parameter $b$ (both left unspecified for now), and given $(\sigma^2,\bm{x},\lambda,q)$ we endow $\bm{f}$ with a so called partially informative Gaussian prior with mean vector $\bm{0}$ and precision matrix $(\bm{S}_{\lambda,q}^{-1}-\bm{I}_n)/\sigma^2$, which we denote $PN\{\bm{0}, (\bm{S}_{\lambda,q}^{-1}-\bm{I}_n)/\sigma^2\}$ and admits a density which is proportional to
\begin{equation}\label{eq:PIN_dens}
\Big|\frac{\bm{S}_{\lambda,q}^{-1}-\bm{I}_n}{\sigma^2}\Big|^{1/2}_+
\exp\bigg\{-\frac1{2\sigma^2}\bm{f}^T(\bm{S}_{\lambda,q}^{-1}-\bm{I}_n)\bm{f}\bigg\},
\end{equation}
where $|\cdot|_+$ represents the product of the non-zero eigenvalues 
(the smoother matrix $\bm{S}_{\lambda,q}$ has exactly $q$ eigenvalues equal to $1$; cf.~\citealt{Speckman:1985} and~\eqref{eq:diagonalisation} in Section~\ref{appendix:technical_results}).
This prior has two parts, a constant, non-informative prior on the null space of $\bm{S}_{\lambda,q}^{-1}-\bm{I}_n$, and a proper, degenerate Gaussian prior on the range of $\bm{S}_{\lambda,q}^{-1}-\bm{I}_n$; cf.~\citealp{Speckman:2003}.

We say that the prior on $(\bm{f},\sigma^2)$ is a partially-informative-Gaussian-inverse-gamma distribution, and denote it by
\begin{equation}\label{eq:prior}
\Pi_{\lambda,q}(\,\cdot\mid \bm{x}) = PNIG\Big\{\bm{0},\; \bm{S}_{\lambda,q}^{-1}-\bm{I}_n,\; a,\; b\Big\}.
\end{equation}
This prior is conjugate for model~\eqref{eq:model} and the corresponding posterior distribution is
\begin{equation}\label{eq:posterior}
\Pi_{\lambda,q}(\,\cdot\mid \bm{Y},\bm{x}) =
PNIG\Big\{\bm{S}_{\lambda,q}\bm{Y},\; \bm{S}_{\lambda,q}^{-1},\; a+\frac n2,\; b+\frac12\bm{Y}^T(\bm{I}_n-\bm{S}_{\lambda,q})\bm{Y}\Big\}.
\end{equation}

The posterior mean of~\eqref{eq:posterior} is $\bm{\hat{f}}_{\lambda,q}$, and the mean of the predictive posterior distribution can be shown to be the smoothing spline $\hat{f}_{\lambda,q}$.
The prior~\eqref{eq:prior} is improper but the corresponding posterior~\eqref{eq:posterior} is proper.
By definition, the marginal posterior for $\sigma^2$ is an inverse-gamma distribution
\begin{equation}\label{eq:marginal_posterior_sigma}
\Pi_{\lambda,q}^{\sigma^2}(\,\cdot\,|\bm{Y},\bm{x}) =
\int\Pi_{\lambda,q}(d\bm{f},\,\cdot\,|\bm{Y},\bm{x}) =
IG\Big\{a+\frac n2,\; b+\frac12\bm{Y}^T(\bm{I}_n-\bm{S}_{\lambda,q})\bm{Y}\Big\},
\end{equation}
and the marginal posterior for $\bm{f}$ is a non-central, $n$-variate
t-distribution (cf.~\citealp{Kotz:2004})
\begin{equation}\label{eq:marginal_posterior_f}
\begin{aligned}
\Pi_{\lambda,q}^{\bm{f}}(\,\cdot\,|\bm{Y},\bm{x}) =
\int\Pi_{\lambda,q}(\,\cdot\,,d\sigma^2|\bm{Y},\bm{x}) =
t_{2a+n-q}\Big\{\bm{S}_{\lambda,q}\bm{Y},\, \frac{2b+\bm{Y}^T(\bm{I}_n-\bm{S}_{\lambda,q})\bm{Y}}{2a+n-q}\bm{S}_{\lambda,q}\Big\}.
\end{aligned}
\end{equation}

The posterior distribution
$\Pi_{\lambda,q}\big(\,\cdot\mid\bm{Y},\bm{x}\big)$ depends on
$\lambda$ and $q$, and on the (hyper-) parameters $a$ and $b$.
We select the unknown parameters $\lambda$ and $q$ in a data-driven way via estimators $\hat\lambda$ and $\hat q$ and plug them into the posterior~\eqref{eq:posterior} resulting in a new random measure, the empirical Bayes posterior, which is defined as
\begin{equation}\label{eq:empirical_Bayes_posterior}
\Pi_{\hat\lambda,\hat q}\big(\,\cdot\mid\bm{Y},\bm{x}\big) =
\Pi_{\lambda,q}\big(\,\cdot\mid\bm{Y},\bm{x}\big)\big|_{(\lambda,q)=(\hat\lambda,\hat q)}.
\end{equation}
Following~\cite{Robbins:1955}, the empirical Bayes method consists of setting $\lambda$ and $q$ to maximisers of the marginal likelihood of model~\eqref{eq:model} under the Bayesian paradigm.
We designate the mean of the empirical Bayes marginal
posterior~\eqref{eq:empirical_Bayes_posterior} as the \emph{adaptive empirical Bayesian smoothing spline}.

Since the data $\bm{Y}|(f,\sigma^2,\bm{x})$ are distributed like a $N(\bm{f},\; \sigma^2\bm{I}_n)$ random vector, and we endow $\bm{f}|(\sigma^2,\bm{x},\lambda,q)$ with a $PN\big\{\bm{0},\,(\bm{S}_{\lambda,q}^{-1}-\bm{I}_n)/\sigma^2\big\}$ prior, then $\bm{Y}|(\sigma^2,\bm{x},\lambda,q)$ is distributed like a $PN\big\{\bm{0},\; (\bm{I}_n-\bm{S}_{\lambda,q})/\sigma^2\big\}$ random vector.
The variance $\sigma^2|(\bm{x},\lambda,q)$ is endowed with an $IG\big(a,b\big)$ prior, such that $(\bm{Y},\sigma^2)|(\bm{x},\lambda,q)$ is by definition jointly distributed like a $PNIG\big\{ \bm{0},\, \bm{I}_n-\bm{S}_{\lambda,q},\, a,\, b\big\}$ random vector.
By integrating out $\sigma^2$, $\bm{Y}|(\bm{x},\lambda,q)$ is distributed like a $t_{2a-q}\big\{\bm{0},\, 2b(\bm{I}_n-\bm{S}_{\lambda,q})^{-}/(2a-q)\big\}$ random vector, where the superscript ``--'' indicates the pseudo-inverse.
It can be shown that this distribution admits a density with respect to an appropriate dominating measure, resulting, up to some constant $h_n\big(a,b\big)$, in the following marginal log-likelihood for $(\lambda,q)$,
\begin{gather*}
\ell_n(\lambda,q\mid a,b) =
-\Big(a+\frac {n-q}2\Big) \log\Big\{ \bm{Y}^T\big(\bm{I}_n - \bm{S}_{\lambda,q}\big)\bm{Y}  +2b\Big\} + \frac12\log|\bm{I}_n - \bm{S}_{\lambda,q}|_+.
\end{gather*}

The hyper-parameters $a$ and $b$ do not play an important role in our approach so we set $a=q/2$, $b=0$ with the convention that $0^0=1$ (this corresponds to placing an improper prior on $\sigma^2$).
This does simplify the expressions that follow, but $a$ and $b$ can be set to any non-negative value that is $o(n)$ and does not depend on $\lambda$ or $q$, without affecting our results.
We obtain, up to a constant, the marginal log-likelihood
\begin{equation}\label{eq:log_likelihood}
\ell_n(\lambda,q) =
\ell_n(\lambda,q\mid q/2,0) =
-\frac n2 \log\Big\{ \bm{Y}^T\big(\bm{I}_n - \bm{S}_{\lambda,q}\big)\bm{Y}\Big\} + \frac12\log|\bm{I}_n - \bm{S}_{\lambda,q}|_+,
\end{equation}
where $(\lambda,q)$ lives on $(0,\infty)\times\mathbb{N}$; note that $h_n(q/2,0)$ does not depend on $\lambda$ or $q$.

The dependence of \eqref{eq:log_likelihood} on $q$ is
rather implicit, so that it is convenient to represent the smoother
matrix as $\bm{S}_{\lambda,q}=\bm{\Phi}\left\{\bm{I}_n+\lambda
n\diag(\bm{\eta}_{q})\right\}^{-1}\bm{\Phi}^T$. Here
$\bm\Phi$ is the Demmler-Reinsch basis
matrix, such that
$\bm\Phi^T\bm\Phi=\bm\Phi\bm\Phi^T=\bm{I}_n$, and 
$\bm{\eta}_q=(\eta_{q,1},\ldots,\eta_{q,n})^T$, see
 Section~\ref{appendix:DR_basis} 
 for details. In particular, \eqref{eq:diagonalisation} in Section~\ref{appendix:technical_results} gives an
 approximation of $\eta_{q,i}$ as a function of $q$. With
 this, we can re-express~\eqref{eq:log_likelihood} as
\begin{equation}\label{eq:log_likelihood_eigen_representation}
\ell_n(\lambda,q) =
-\frac n2 \log\bigg( \sum_{i=q+1}^n \frac{X_i^2 \lambda n\eta_{q,i}}{1+\lambda n\eta_{q,i}}\bigg) +
\frac12\sum_{i=q+1}^n\log \frac{ \lambda n\eta_{q,i}}{1+\lambda n\eta_{q,i}},
\end{equation}
where $\bm{X} = (X_1,\ldots,X_n)=\bm{\Phi}^T\bm{Y}$. 
Further, based on the approximations from~\eqref{eq:diagonalisation}, $\ell_n(\lambda,q)$ is continuously differentiable.

A maximiser $(\hat\lambda,\hat q)$ of $\ell_n(\lambda,q)$ is found by means of estimating equations, as zeroes of appropriately rescaled partial derivatives of $\ell_n(\lambda,q)$.
Our estimating equations for $\lambda$ and $q$ are respectively
\begin{equation}\label{eq:criteria}
\begin{aligned}
T_\lambda(\lambda,q) &=
-\frac{2\lambda}{n^2}\Big\{\bm{Y}^T(\bm{I}_n-\bm{S}_{\lambda,q})\bm{Y}\Big\}
\frac{\partial\ell_n(q, \lambda)}{\partial\lambda}\\&= 
\frac{1}{n}\sum_{i=q+1}^n \frac{X_i^2 \lambda n\eta_{q,i}}{(1+\lambda n\eta_{q,i})^2}-\frac{1}{n^2} \sum_{i=q+1}^n \frac{ X_i^2\lambda n\eta_{q,i}}{1+\lambda n\eta_{q,i}}\sum_{i=q+1}^n \frac{1}{1+\lambda n\eta_{q,i}}, \quad\text{and}\\
T_q(\lambda,q)	&=
-\frac{2q}{n^2}\Big\{\bm{Y}^T(\bm{I}_n-\bm{S}_{\lambda,q})\bm{Y}\Big\}
\frac{\partial\ell_n(q, \lambda)}{\partial q}\\&=
\frac{1}{n}\sum_{i=q+1}^n\frac{X_i^2 \lambda
  n\eta_{q,i}\log( n\eta_{q,i})}{(1+\lambda n\eta_{q,i})^2}-\frac{1}{n^2} \sum_{i=q+1}^n
\frac{ X_i^2\lambda n\eta_{q,i}}{1+\lambda n\eta_{q,i}}\sum_{i=q+1}^n
\frac{\log( n\eta_{q,i})}{(1+\lambda n\eta_{q,i})}
\\&+
\frac{1}{n}\sum_{i=q+1}^n\frac{\partial X_i^2}{\partial q}\frac{\lambda
   n\eta_i}{1+\lambda n \eta_i},
\end{aligned}
\end{equation}
See Section~\ref{appendix:DR_basis} for details on the
derivation of these expressions. In particular, it is shown that the
contribution of the last term in $T_q(\lambda,q)$ with $\partial
X_i^2/\partial q$ is negligible. 
Note that since $\ell_n(\lambda,q)$ is continuously differentiable, if $\hat\lambda_q$ solves $T_\lambda(\lambda,q)=0$ for each $q$ and $\hat q$ solves $T_q(\hat\lambda_q,q)=0$, then $(\hat\lambda,\hat q)=(\hat\lambda_{\hat q}, \hat q)$.
Note that for each $q$, $\hat\lambda_q$ is essentially the generalised maximum likelihood estimator from~\cite{Wahba:1985}; cf.~(1.5) in~\cite{Wahba:1985} and the criterion $T_\lambda(\lambda,q)$ above.\\

We briefly address some practical issues involving the optimisation of
the criteria in~\eqref{eq:criteria}.
The estimate $\hat\lambda$ is taken on $[1/n,1]$. Parameter $q$ enters the 
eigenvalues $\eta_{q,i}$ according to \eqref{eq:diagonalisation} and $X_i$ as the degree
$2q-1$ of basis
$\bm{\Phi}$. While values $\eta_{q,i}$ are defined for each $q$, the
degree of a spline is, in practice, typically an integer. One practical way to
proceed in minimisation of \eqref{eq:criteria} would be to restrict
$q\in\mathbb{N}$. Alternatively, one could generalise splines to a
fractional order (cf.~\citealp{Unser:2000}, for a representation of
fractional splines in terms of fractional B-splines), which we do not
pursue. Instead, we can use the fact that the contribution of the term
with $\partial
X_i^2/\partial q$  is negligible
(see Section~\ref{appendix:DR_basis} for details). Therefore, we suggest to relax
$q\in(1/2,\log(n)]$ to be real-valued and in practice set
$X_{q,i}=X_{\lfloor q\rfloor,i}$, which allows to estimate 
non-integer $q$'s. Additionally, we also have to define
${\cal{W}}_\beta(M)$ for real-valued $\beta>1/2$, which we do in
Section~\ref{appendix:technical_results}, equation \eqref{eq:genSobolev}. Hence, throughout the paper
both $q$ and $\beta$ are understood as real-valued numbers. In particular,
all results and proofs hold also for $q,\beta\in\mathbb{N}$.

In practice, finding $(\hat\lambda,\hat q)$ that optimises the criteria in~\eqref{eq:criteria} consists of finding $\hat\lambda_q$ that solves, for each $q$ in some fine grid $\mathbb{Q}_n$, the criterion $T_\lambda(\lambda,q)$ up to an $o(1/n)$ factor, then finding $\hat q\in \mathbb{Q}_n$ that solves $T_q(\hat\lambda_q,q)$ and setting $(\hat\lambda,\hat q)=(\hat\lambda_{\hat q}, \hat q)$.
The grid $\mathbb{Q}_n=\{q_{1},\cdots,q_{N_n}\}\in(1/2,\log(n)]^{N_n}$ must be such that $|q_{i-1}-q_{i}|=o\{1/\log(n)\}$, $i=1,\dots,N_n$, with $q_{0}=1/2$.
This ensures that
\[
n^{-\frac{2q_{i-1}}{2q_{i-1}+1}} = n^{-\frac{2q_{i}}{2q_{i}+1}}\{1+o(1)\}, \quad i=1,\dots, N_n,
\]
which means that the discretisation is sufficiently fine.

\section[Asymptotics]{Asymptotics of the solutions of the estimating equations}\label{sec:asymptotics}%

Fix some continuous regression function $f\in L_2$, and denote $\bm{B}=(B_1,\ldots,B_n)^T=\bm{\Phi}^T\bm{f}$ such that $\mathbb{E}\bm{X}=\mathbb{E}\bm{\Phi}^T\bm{Y}=\bm{B}$.
The oracle smoothing parameters will be defined as a solution to the system of equations
\begin{align}
0&=\mathbb{E}T_\lambda(\lambda, q) =
\frac{1}{n}\bigg\{\sum_{i=q+1}^n\frac{B_i^2 \lambda n\eta_{q,i}}{(1+\lambda n\eta_{q,i})^2}-
\sum_{i=q+1}^n\frac{\sigma^2}{(1+\lambda n\eta_{q,i})^2}+o(1)\bigg\},\label{eq:EElambda}\\
0&=\mathbb{E}T_q(\lambda,q)=
\frac{1}{n}\sum_{i=q+1}^n\frac{B_i^2 \lambda n\eta_{q,i}\log(\lambda n\eta_{q,i})}{(1+\lambda n\eta_{q,i})^2} +
 \log(1/\lambda)\mathbb{E}T_\lambda(\lambda,q),\label{eq:EEq}
\end{align}
where the expectation is taken under model (\ref{eq:model}).
These expressions follow by several applications of
Lemmas~\ref{lemma:trace} and~\ref{lemma:trace_2}, similar to
derivations in \cite{Krivobokova:2013}.
To solve this system assume that for each $q>1/2$,
equation~\eqref{eq:EElambda} has a unique solution $\lambda_q$.
Then equation~\eqref{eq:EEq} at $\lambda=\lambda_q$ becomes
\begin{equation}\label{eq:EElambdaq}
0 = \mathbb{E}T_q(\lambda_q,q) =
\frac{1}{n}\sum_{i=q+1}^n\frac{B_i^2 \lambda_q n\eta_{q,i}\log(\lambda_q n\eta_{q,i})}{(1+\lambda_q n\eta_{q,i})^2}.
\end{equation}
If this equation has a unique solution ${\bar\beta}$, then the
solution to the system (\ref{eq:EElambda}), (\ref{eq:EEq}) on
$[1/n,1]\times(1/2,\log(n)]$ will be called the
oracle parameter $(\lambda_{\bar\beta},{\bar\beta})$.

Apparently, the risk \eqref{eq:risk} depends on the relationship between
$q$ and $\beta$, whereby $q$ should be chosen, while $\beta$ is
unknown. Therefore, we analyse both oracle parameters under two
scenarios: a {\it low order penalty} scenario where $q\le\max\{\beta>1/2:f\in\mathcal{W}_\beta(M)\}$, and
a {\it high order penalty} scenario where
$q>\max\{\beta>1/2:f\in\mathcal{W}_\beta(M)\}$. Here
$\max\{\beta>1/2:f\in\mathcal{W}_\beta(M)\}$ is considered, since 
 $f\in\mathcal{W}_\beta(M)$ does not preclude $f\in\mathcal{W}_{\beta'}(M)$ for some $\beta'>\beta$.
Additionally to $f\in\mathcal{W}_\beta(M)$, we also discuss the case
when $f$ is an analytic signal. $\mathcal{P}_\infty$ will denote the
space of all analytic functions on $[0,1]$ such that
$\mathcal{P}_\infty\subset\mathcal{W}_\infty(M)$, while the space of all polynomials of degree $d-1$ is denoted by $\mathcal{P}_d$, $d\in\mathbb{N}$.

\subsection[EBE for $\lambda$]{Empirical Bayes estimate for $\lambda$}\label{sec:asymptotics:empirical_bayes_lambda}

First we consider the solution to \eqref{eq:EElambda} for each $q>1/2$.
\begin{theorem}\label{th:lambda}
Let $f\in\mathcal{W}_\beta(M)$, and assume that $\|f^{(\beta)}\|^2>0$. \\
If
$1/2< q\leq \max\{\beta>1/2:f\in\mathcal{W}_\beta(M)\}$, then
\begin{equation}\label{eq:Bayes_oracle}
\lambda_q=\left[n\frac{\|f^{(q)}\|^2}{\sigma^2\kappa_q(0,2)}\{1+o(1)\}\right]^{-2q/(2q+1)},
\end{equation}
where the constants $\kappa_q(m,l)$ are defined in Section~\ref{appendix:technical_results}.\\
If $q>\max\{\beta>1/2:f\in\mathcal{W}_\beta(M)\}$, then 
\begin{equation}\label{eq:lambda_q_upper}
\lambda_q\ge\left[n\frac{\|f^{({\beta})}\|^2}{\sigma^2\kappa_q(0,2)}\{1+o(1)\}\right]^{-2q/(2{\beta}+1)}.
\end{equation}
Moreover, for any $q>1/2$, $\hat\lambda_q$ is consistent for
$\lambda_q$ and
\[
\lambda_q^{-1/(4q)}\big(\hat\lambda_q/\lambda_q - 1\big) \stackrel{d}{\longrightarrow}
\mathcal{N}\left[0,\; \frac{2 \kappa_q(2,2)}{\big\{3\kappa_q(0,2)-2\kappa_q(0,3)\big\}^2}\right],
\quad\text{ as } n\to\infty.
\]
\end{theorem}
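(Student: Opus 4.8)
The plan is to analyze the estimating equation $T_\lambda(\lambda,q)=0$ as a perturbation of its expectation $\mathbb{E}T_\lambda(\lambda,q)=0$, whose unique root is $\lambda_q$. First I would establish the asymptotic formulas \eqref{eq:Bayes_oracle} and \eqref{eq:lambda_q_upper} for the oracle $\lambda_q$ itself. This is a deterministic computation: using the eigenvalue approximations from \eqref{eq:diagonalisation} (i.e. $\eta_{q,i}\asymp(i/n)^{2q}$ up to constants), I would replace the sums $\frac1n\sum_{i=q+1}^n B_i^2 \lambda n\eta_{q,i}/(1+\lambda n\eta_{q,i})^2$ and $\frac1n\sum \sigma^2/(1+\lambda n\eta_{q,i})^2$ by integrals via Lemmas~\ref{lemma:trace} and~\ref{lemma:trace_2}. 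The variance-type term yields a contribution of order $\lambda^{-1/(2q)}/n$ times $\kappa_q(0,2)$, while the bias-type term involves $\|f^{(q)}\|^2$ (low order penalty scenario, where $B_i^2$ decays slowly enough that the Sobolev norm appears as the limit) or is bounded below using $\|f^{(\beta)}\|^2$ (high order penalty scenario); balancing the two gives the stated rate $n^{-2q/(2q+1)}$, respectively the lower bound $n^{-2q/(2\beta+1)}$. The key subtlety here is controlling the $o(1)$ terms uniformly and verifying that the replacement of discrete sums by $\kappa_q$-integrals is legitimate for $\lambda$ in the relevant range; this is where most of the bookkeeping lies, but it parallels \cite{Krivobokova:2013}.

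Next, for the limit distribution, I would center and rescale. Write $T_\lambda(\lambda,q) = \mathbb{E}T_\lambda(\lambda,q) + \{T_\lambda(\lambda,q)-\mathbb{E}T_\lambda(\lambda,q)\}$, and note the fluctuation term is a quadratic form in $\bm{X}$ (equivalently in the Gaussian errors, since $X_i = B_i + \sigma Z_i$ with $Z_i$ i.i.d.\ standard normal after the orthogonal transformation $\bm\Phi^T$). A one-step Taylor expansion of the estimating equation around $\lambda_q$ gives
\[
0 = T_\lambda(\hat\lambda_q,q) = T_\lambda(\lambda_q,q) + (\hat\lambda_q-\lambda_q)\,\partial_\lambda T_\lambda(\tilde\lambda,q)
\]
for some intermediate $\tilde\lambda$, so that
\[
\frac{\hat\lambda_q}{\lambda_q}-1 = -\frac{T_\lambda(\lambda_q,q)}{\lambda_q\,\partial_\lambda T_\lambda(\tilde\lambda,q)}.
\]
The numerator $T_\lambda(\lambda_q,q)$ has mean of order $o(1/n)$ by definition of the oracle, and its variance I would compute explicitly: the dominant stochastic part is $\frac1n\sum_{i=q+1}^n (\sigma Z_i)^2 \lambda_q n\eta_{q,i}/(1+\lambda_q n\eta_{q,i})^2$ minus cross terms, a sum of independent (centered) variables whose variance scales like $\lambda_q^{1/(2q)}/n^2$, with constant $2\sigma^4\kappa_q(2,2)$ after evaluating the relevant $\kappa_q$-integral. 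A Lyapunov (or martingale) CLT then gives asymptotic normality of $n\lambda_q^{-1/(4q)}T_\lambda(\lambda_q,q)/\sqrt{2\sigma^4\kappa_q(2,2)}$. For the denominator, I would show $\lambda_q\,\partial_\lambda T_\lambda(\lambda_q,q) \to c_q$ in probability, where $c_q$ is a deterministic constant obtained by differentiating the integral approximation of $\mathbb{E}T_\lambda$; a careful computation yields $c_q$ proportional to $3\kappa_q(0,2)-2\kappa_q(0,3)$ (up to the same $\lambda_q^{-1/(2q)}/n$ scaling that cancels against the numerator), and one must check continuity/stability to replace $\tilde\lambda$ by $\lambda_q$. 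Combining via Slutsky gives the stated variance $2\kappa_q(2,2)/\{3\kappa_q(0,2)-2\kappa_q(0,3)\}^2$.

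Consistency of $\hat\lambda_q$ would be handled first, before the Taylor expansion, by a standard argument: $T_\lambda(\cdot,q)$ converges uniformly (on compacts in the log-scale, or on $[1/n,1]$ suitably reparametrized) to $\mathbb{E}T_\lambda(\cdot,q)$, which is continuous with a unique, well-separated zero at $\lambda_q$ by the assumption $\|f^{(\beta)}\|^2>0$; an argmax/argzero continuity argument then yields $\hat\lambda_q/\lambda_q\to1$ in probability. The main obstacle, I expect, is the uniform control of the remainder in the Taylor expansion — i.e.\ showing that $\partial_\lambda T_\lambda(\tilde\lambda,q)$ is close to $\partial_\lambda T_\lambda(\lambda_q,q)$ with high probability, which requires a modulus-of-continuity bound on the second derivative of the log-likelihood near the oracle, combined with the rate at which $\hat\lambda_q$ approaches $\lambda_q$ (a chicken-and-egg situation resolved by first getting a crude rate, then bootstrapping). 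A secondary obstacle is tracking the exact constants through the sum-to-integral approximations so that the $\kappa_q(m,\ell)$ appear with the correct arguments; this is tedious but follows the template already set by Lemmas~\ref{lemma:trace} and~\ref{lemma:trace_2} and the analogous computation in \cite{Krivobokova:2013}.
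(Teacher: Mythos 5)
Your proposal takes essentially the same route as the paper: the oracle formulas \eqref{eq:Bayes_oracle} and \eqref{eq:lambda_q_upper} come from the sum-to-integral approximations with the $\kappa_q$ constants (Lemmas~\ref{lemma:trace} and~\ref{lemma:quadratic_forms}) by balancing the bias-type and variance-type terms in $\mathbb{E}T_\lambda(\lambda,q)=0$, and the consistency and limit law follow from the estimating-equation Taylor expansion around $\lambda_q$, a CLT for the Gaussian quadratic form $T_\lambda(\lambda_q,q)$, and Slutsky -- which is exactly the argument the paper imports from \cite{Krivobokova:2013}. Two bookkeeping slips to fix: in the high-order scenario it is an \emph{upper} bound on the bias term, via $\frac1n\sum_i B_i^2(n\eta_{q,i})^{\beta/q}=\|f^{(\beta)}\|^2\{1+o(1)\}$, that forces the lower bound \eqref{eq:lambda_q_upper} on $\lambda_q$; and $\mathbb{V}\{T_\lambda(\lambda_q,q)\}$ is of order $2\sigma^4\kappa_q(2,2)\lambda_q^{-1/(2q)}/n^2$ (not $\lambda_q^{1/(2q)}/n^2$), which, against $\lambda_q\,\partial_\lambda\mathbb{E}T_\lambda(\lambda_q,q)\asymp \sigma^2\lambda_q^{-1/(2q)}\{3\kappa_q(0,2)-2\kappa_q(0,3)\}/n$, is what produces the $\lambda_q^{-1/(4q)}$ normalisation and the stated limiting variance.
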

Proof of equations \eqref{eq:Bayes_oracle} and
\eqref{eq:lambda_q_upper} follows from Lemmas \ref{lemma:trace}
and~\ref{lemma:quadratic_forms} in Section~\ref{appendix:technical_results}.
The consistency of $\hat\lambda_q$ and its asymptotic distribution in the case of $f\in\mathcal{W}_q(M)$ has been studied in \cite{Krivobokova:2013}.
Inspection of the  proofs in~\cite{Krivobokova:2013} shows that they hold with no changes for the case $q>\max\{\beta>1/2:f\in\mathcal{W}_\beta(M)\}$.\\
Note that if $f\in\mathcal{P}_{q}$ such that $\|f^{(q)}\|=0$, then $\lambda_q=\infty$.

\subsection[EBE for $q$]{Empirical Bayes estimate for $q$}\label{sec:asymptotics:empirical_bayes_q}

First, consider the low penalty scenario where
$1/2<q\le\max\{\beta>1/2:f\in\mathcal{W}_\beta(M)\}$ holds, so that in particular $f\in\mathcal{W}_q(M)$.
By Lemma~\ref{lemma:quadratic_forms} (cf.~Section~\ref{appendix:technical_results}),
\begin{equation}\label{eq:EQforBeta}
\mathbb{E}T_q(\lambda_q,q)=-\lambda_q\log(1/\lambda_q)\|f^{(q)}\|^2\{1+o(1)\}.
\end{equation}
Hence, for all $1/2<q\le\max\{\beta>1/2:f\in\mathcal{W}_\beta(M)\}$ the estimating
equation 
$\mathbb{E}T_q(\lambda_q,q)$ remains strictly negative for
$f\in\mathcal{W}_q(M)\backslash\mathcal{P}_q$ (that is as long as
$\|f^{(q)}\|\neq 0$). If $f\in\mathcal{P}_q$,
then $\mathbb{E}T_q(\lambda_q,q)=0$.

Consider now the high order penalty scenario where $q>\max\{\beta>1/2:f\in\mathcal{W}_\beta(M)\}$.
Contrary to the low penalty scenario, the sign of~\eqref{eq:EElambdaq}
is not characterised by just the assumption
$f\in\mathcal{W}_\beta(M)$, which implies $f\not\in\mathcal{W}_q(M)$.
It turns out, that not every signal $f$ that belongs to $\mathcal{W}_\beta(M)$ but not to $\mathcal{W}_{\beta+\delta}(M)$ for any $\delta>0$ will be such that~\eqref{eq:EElambdaq} is positive for $q>\max\{\beta>1/2:f\in\mathcal{W}_\beta(M)\}$.
Such a mismatch between smoothness as ``measured'' by $\max\{\beta>1/2:f\in\mathcal{W}_\beta(M)\}$, and smoothness as ``measured'' by a change in the sign of the sum in~\eqref{eq:EElambdaq} (which we can estimate), seems unavoidable (cf.~\citealp{Belitser:2008} for a similar issue in the context of hypothesis testing for smoothness in the Gaussian white noise model, and~\citealp{Gine:2010} in the context of H\"older smoothness in the construction of adaptive $L_\infty$ credible bands in density estimation).
From such issues stems, for example, the inability to construct adaptive credible sets in certain models with good coverage probability for $f\in{\bigcup}_{\beta\in{B}}\mathcal{W}_\beta$ if the range of smoothnesses ${B}$ is large (cf.~\citealp{Low:1997} and Section~\ref{sec:coverage}).

To ``estimate'' $\max\{\beta>1/2:f\in\mathcal{W}_\beta(M)\}$ for as large a family of models as possible, it is customary to remove from each model the functions for which the mismatch occurs, and thus consider the estimation problem over a smaller family.
Possible functions to remove are those which are not \emph{self-similar} (cf.~\citealp{Picard:2000,Gine:2010}), or that do not satisfy a \emph{polished-tail} condition (cf.~\citealp{Szabo:2013}).

In our context, the set of functions with polished tails, call it $\mathcal{M}=\mathcal{M}(L,N,\rho)$, $L>0$, $N\in\mathbb{N}$, $\rho\ge2$, corresponds the set of all square integrable sequences such that
\begin{equation}\label{eq:self_similar}
\frac1n\sum_{i=j}^n B_i^2 \le \frac Ln \sum_{i=j}^{\rho j} B_i^2, \quad N\le j \le n/\rho.
\end{equation}
Such a condition has the role of excluding ``irregular'' signals, and unlike self-similarity conditions, it is not associated with any specific smoothness class.
The condition ensures that the energy contained in the blocks $(B_j,\dots,B_{\rho j})$ does not surge over contributions of earlier blocks -- the signal can only get more ``polished'' as one runs along the sequence $B_i$.
This effectively excludes irregular signals that contain artefacts in their tails that influence the smoothness of the signal but are not detectable due to the noise.

For signals with polished tails, the criterium $T_q$ can actually pick up on the smoothness of the signal.
For fixed parameters $L,N,\rho$, there is a well defined notion of smoothness which we denote
\begin{equation}\label{eq:smoothness}
\bar\beta = \max\big\{\beta>1/2:f\in\mathcal{W}_\beta(M)\cap\mathcal{M}\big\}, \quad\quad
\end{equation}
If $f$ satisfies the polished tail conditions and is not in $\mathcal{W}_{\beta}(M)$, then by Lemma~\ref{lemma:quadratic_forms}, for some $c>0$,
\begin{equation}\label{eq:EQforBeta3}
\mathbb{E}T_q(\lambda_q,q) \ge c \lambda_q^{\beta/q} > 0,
\end{equation}
for all large enough $n$.
The conclusion is that for all sufficiently large $n$, $\mathbb{E}T_q(\lambda_q,q)$ changes signs at $\bar\beta$.\\

We conclude that the behaviour of $\mathbb{E}T_q(\lambda_q,q)$ can be described as follows.
If for some $\beta>1/2$, $f\in\mathcal{W}_\beta(M)\cap\mathcal{M}$ 
then the (continuous) criterion $\mathbb{E}T_q(\lambda_q,q)$ has a zero at $\min\{q\in \mathbb{Q}_n: q>\bar\beta\}$ 
since it is negative for $q\le\bar\beta$, and positive for $q>\bar\beta$.
If $f\in\mathcal{P}_\infty$ then $\mathbb{E}T_q(\lambda_q,q)\le 0$, $q\in \mathbb{Q}_n$;
if for some $d\in\mathbb{N}$,
$f\in\mathcal{P}_d\backslash\mathcal{P}_{d-1}$ (such that $f$ is a
polynomial of degree exactly $d-1$), then $\mathbb{E}T_q(\lambda_q,q)$
is negative for $1/2<q\leq d-1$ and is zero for $q\geq d$.

The proof of the following theorem is in Section~\ref{appendix:proof_consistency_q}.
\begin{theorem}\label{theorem:consistency_q}
Assume that for some $\beta>1/2$, $f\in\mathcal{W}_\beta(M)$, then
\[
\mathbb{P}\big\{ \beta< \hat q \le \log(n) \big\} \to 1, \quad n\to\infty.
\]
If furthermore $\beta=\bar\beta$ as defined
in~\eqref{eq:smoothness}, and $f\in\mathcal{M}$, then
\[
\hat q \stackrel{P}{\longrightarrow} \bar\beta, \quad n\to\infty,
\]
If for some $d\in\mathbb{N}$, $f\in\mathcal{P}_d$ then
\[
\mathbb{P}\big\{ d \le \hat q \le \log(n) \big\} \to 1, \quad n\to\infty.
\]
\end{theorem}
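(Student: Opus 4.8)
The plan is to treat this as a $Z$-estimation argument. By construction $\hat q$ is the point of $\mathbb{Q}_n$ at which the continuous random map $q\mapsto T_q(\hat\lambda_q,q)$ from \eqref{eq:criteria} crosses zero, while the discussion preceding the theorem has already determined the sign of the deterministic map $q\mapsto\mathbb{E}T_q(\lambda_q,q)$ on $\mathbb{Q}_n$ in each regime. It therefore suffices to show that $T_q(\hat\lambda_q,q)$ is, uniformly over $q\in\mathbb{Q}_n$, close to $\mathbb{E}T_q(\lambda_q,q)$, with an error negligible compared with $|\mathbb{E}T_q(\lambda_q,q)|$ wherever the latter is bounded away from zero, and then to read off the three conclusions.

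First I would reduce $T_q(\hat\lambda_q,q)$ to the deterministic criterion. By Theorem~\ref{th:lambda}, $\hat\lambda_q/\lambda_q\to1$ in probability; running the same concentration bound for $T_\lambda(\cdot,q)$ around its mean (whose unique zero is $\lambda_q$), a union bound over the $N_n$ grid points, and the smoothness of $q\mapsto\lambda_q$ and $q\mapsto T_\lambda(\cdot,q)$ afforded by \eqref{eq:diagonalisation} upgrade this to $\sup_{q\in\mathbb{Q}_n}|\hat\lambda_q/\lambda_q-1|\gaatp0$. Since $T_\lambda(\hat\lambda_q,q)=0$ exactly, the component of $T_q$ proportional to $T_\lambda$ in the decomposition underlying \eqref{eq:EEq} drops out, so, up to the negligible $\partial X_i^2/\partial q$ term (Section~\ref{appendix:DR_basis}), $T_q(\hat\lambda_q,q)$ reduces to $\frac1n\sum_{i=q+1}^n X_i^2\,\hat\lambda_q n\eta_{q,i}\log(\hat\lambda_q n\eta_{q,i})/(1+\hat\lambda_q n\eta_{q,i})^2$, exactly as in \eqref{eq:EElambdaq}, which by the uniform control of $\hat\lambda_q$ equals the same quantity at $\lambda_q$ up to a $\{1+o_{\mathbb{P}}(1)\}$ factor. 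That quantity is a linear-plus-quadratic form in the rescaled Gaussian vector $\bm{X}-\bm{B}=\sigma\bm{\Phi}^T\bm{\epsilon}$ with deterministic bounded coefficients, has mean $\mathbb{E}T_q(\lambda_q,q)$ by \eqref{eq:EElambdaq}, and has variance controlled by Lemmas~\ref{lemma:trace} and~\ref{lemma:trace_2}; sub-exponential (Hanson--Wright) tail bounds together with a union bound over $\mathbb{Q}_n$ -- only polynomially many points in $\log n$, by the $o(1/\log n)$ grid spacing -- then give $\sup_{q\in\mathbb{Q}_n}|T_q(\hat\lambda_q,q)-\mathbb{E}T_q(\lambda_q,q)|=o_{\mathbb{P}}(r_n)$ at the rate required above.

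On the resulting high-probability event the three claims follow. For $1/2<q\le\beta$, \eqref{eq:EQforBeta} gives $\mathbb{E}T_q(\lambda_q,q)=-\lambda_q\log(1/\lambda_q)\|f^{(q)}\|^2\{1+o(1)\}<0$, bounded away from zero uniformly because $\|f^{(\beta)}\|^2>0$ forces $\inf_{1/2<q\le\beta}\|f^{(q)}\|^2>0$; hence $T_q(\hat\lambda_q,q)<0$ on all such grid points, the zero lies beyond $\beta$, and so $\hat q>\beta$, while $\hat q\le\log n$ by construction of $\mathbb{Q}_n$. If moreover $\beta=\bar\beta$ and $f\in\mathcal{M}$, then for $q>\bar\beta$ equation~\eqref{eq:EQforBeta3} gives $\mathbb{E}T_q(\lambda_q,q)\ge c\lambda_q^{\bar\beta/q}>0$, again dominating the fluctuation, so $T_q(\hat\lambda_q,q)>0$ there while it is negative for $q\le\bar\beta$; thus $\hat q$ equals $\min\{q_i\in\mathbb{Q}_n:q_i>\bar\beta\}$ with probability tending to one, and as the grid has spacing $o(1/\log n)$ this point tends to $\bar\beta$, giving $\hat q\gaatp\bar\beta$. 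Finally, if $f\in\mathcal{P}_d$ then $\|f^{(q)}\|=0$ and $\lambda_q=\infty$ for $q\ge d$, so $\mathbb{E}T_q(\lambda_q,q)=0$ for $q\ge d$ whereas it is negative and bounded away from zero for $q<d$ by the sign analysis preceding the theorem, and the first argument again yields $\hat q\ge d$.

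The main obstacle is the uniform-in-$q$ concentration at a rate $r_n$ strictly below the deterministic gap, which is itself vanishing: that gap is of order $\lambda_q\log(1/\lambda_q)\asymp n^{-2q/(2q+1)}\log n$ in the low-order regime and of order $\lambda_q^{\bar\beta/q}\asymp n^{-2\bar\beta/(2\bar\beta+1)}$ in the high-order regime, while the weights $\log(\lambda_q n\eta_{q,i})\asymp\log n$ inflate the variance. Checking that the variance estimates from Lemmas~\ref{lemma:trace} and~\ref{lemma:trace_2} still beat this gap after the union bound over the grid, and carrying the uniform-in-$q$ error of $\hat\lambda_q$ through into $T_q$ -- for which a uniform-in-$q$ strengthening of the consistency statement in Theorem~\ref{th:lambda} is needed -- is where the real work lies; the near-integer relaxation $X_{q,i}=X_{\lfloor q\rfloor,i}$ together with the negligibility of $\partial X_i^2/\partial q$ (Section~\ref{appendix:DR_basis}) is exactly what makes $q\mapsto T_q(\hat\lambda_q,q)$ smooth enough for this grid reduction to be legitimate.
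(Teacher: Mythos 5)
Your overall architecture is the same as the paper's: establish that the stochastic criterion $T_q(\hat\lambda_q,q)$ is, uniformly over the grid $\mathbb{Q}_n$, within $o(|\mathbb{E}T_q(\lambda_q,q)|)$ of the deterministic criterion, and then read off the three statements from the sign pattern of $\mathbb{E}T_q(\lambda_q,q)$ (negative below $\bar\beta$, positive above under the polished-tail condition, zero beyond the degree for polynomials). However, there are two concrete problems with your execution. First, the reduction step is not an identity as you state it. Writing $\log(n\eta_{q,i})=\log(\lambda n\eta_{q,i})+\log(1/\lambda)$ in \eqref{eq:criteria} gives
\begin{align*}
T_q(\lambda,q) &= \frac1n\sum_{i=q+1}^n\frac{X_i^2\,\lambda n\eta_{q,i}\log(\lambda n\eta_{q,i})}{(1+\lambda n\eta_{q,i})^2}
-\frac1{n^2}\sum_{i=q+1}^n\frac{X_i^2\,\lambda n\eta_{q,i}}{1+\lambda n\eta_{q,i}}\sum_{i=q+1}^n\frac{\log(\lambda n\eta_{q,i})}{1+\lambda n\eta_{q,i}}\\
&\quad+\log(1/\lambda)\,T_\lambda(\lambda,q)+\text{(negligible $\partial X_i^2/\partial q$ term)},
\end{align*}
so setting $\lambda=\hat\lambda_q$ removes only the $\log(1/\lambda)T_\lambda$ part; the cross term remains, and the first sum retains its noise ($\sigma^2$) component. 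Each of these two noise-bearing pieces has expectation of the same order as the deterministic gap (order $\lambda^{-1/(2q)}\log(1/\lambda)/n$, i.e.\ order $\lambda_q\log(1/\lambda_q)$ at $\lambda_q$), and it is only their \emph{combined} expectation that cancels down to \eqref{eq:EElambdaq} -- that cancellation is exactly the content of \eqref{eq:EEq}. Hence your claim that the pure log-weighted $X_i^2$ sum ``has mean $\mathbb{E}T_q(\lambda_q,q)$ by \eqref{eq:EElambdaq}'' is false as stated; you must either keep both noise-bearing terms and concentrate their sum, or not invoke the reduction at all.

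Second, the decisive quantitative step -- that the stochastic fluctuation is $o$ of the gap, uniformly over $\mathbb{Q}_n$, despite the $\log$-inflated weights -- is only announced in your last paragraph (``where the real work lies'') and never carried out; but this computation \emph{is} the paper's proof. The paper shows directly that $\mathbb{V}T_q(\lambda_q,q)=O\{\lambda_q^{-1/(2q)}\log(1/\lambda_q)^2 n^{-2}\}$, which is $o[\{\mathbb{E}T_q(\lambda_q,q)\}^2]$ in both regimes, applies Chebyshev with a union bound over $\mathbb{Q}_n$, and then handles the substitution of $\hat\lambda_q$ for $\lambda_q$ not through the exact zero of $T_\lambda$ but through the expansion \eqref{eq:variance_q}, $T_q(\hat\lambda_q,q)=T_q(\lambda_q,q)+(\hat\lambda_q-\lambda_q)\,\partial_\lambda T_q(\tilde\lambda,q)$, with the derivative term controlled by the arguments of Lemma~\ref{lemma:variance}; this also dispenses with the uniform-in-$q$ strengthening of Theorem~\ref{th:lambda} that your route requires but does not prove. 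Your sign analysis and the grid/interpolation argument for the three conclusions do match the paper, and your side remark that the first statement needs $\|f^{(\beta)}\|>0$ (to keep $\mathbb{E}T_q(\lambda_q,q)$ strictly negative for $q\le\beta$) is consistent with the discussion in Section~\ref{sec:asymptotics:empirical_bayes_q}; but without the variance-versus-gap verification and with the incorrect mean identity, the proposal as written does not yet constitute a proof.
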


As a side-note, the oracle $\lambda_q$ can also be made more explicit for $f\in\mathcal{W}_{\beta}(M)\cap\mathcal{M}$.
By~\eqref{eq:lambda_q_upper} and \eqref{eq:self_similar}, one finds that for each $f\in\mathcal{M}_{\bar\beta}(M)\cap\mathcal{M}$,
\begin{equation}\label{eq:lambda_q_self_similar}
\lambda_q(f) = \left[n\frac{c_f M^2}{\sigma^2\kappa_q(0,2)}\{1+o(1)\}\right]^{-2q/(2\bar\beta+1)},
\end{equation}
for some constant $0<c_f\le1$, which depends on $f$ and is bounded away from zero uniformly over $f\in\mathcal{W}_\beta(M)\cap\mathcal{M}$.

The exclusion of certain signals (so that the behaviour above holds) can be argued to be innocuous since the set of all signals that do not satisfy the polished-tail condition (or that are not self-similar) is ``small''.
This can be justified following several arguments:
removing such signals leaves the minimax rate (almost) unchanged so that the statistical problem does not become simpler;
the probability that a function sampled from the prior does not satisfy such conditions is zero;
there are also topological arguments for this.
For a more extensive discussion cf.~\citealp{Gine:2010,Szabo:2013} and the references therein.
However, from the practical perspective, exactly which signals are removed is not relevant since one cannot check if the data come from a regression function  satisfying a polished tail condition or not.
In this sense one might as well implicitly exclude all signals for which~\eqref{eq:EQforBeta3} or~\eqref{eq:lambda_q_self_similar} do not hold.\\

\section[Confidence sets]{Bayesian credible sets as adaptive confidence sets}\label{sec:coverage}%

In Section~\ref{sec:empirical_bayes} we propose a method for selecting the penalty order $q$ of smoothing splines and the corresponding smoothing parameter $\lambda_q$.
An immediate application of the consistency results in Section~\ref{sec:asymptotics} is that $\hat q$ and $\hat\lambda_{\hat q}$ can be directly plugged into the smoothing spline $\hat{f}_{\lambda,q}$ to obtain adaptive estimates for any continuous regression function $f\in L_2$.
(This follows immediately from the consistency of the parameters, and standard arguments for smoothing spline estimators; cf.~\citealp{Wahba:1990}.)
In this section we present another application: the construction of rate adaptive confidence sets based on the empirical Bayes posterior~\eqref{eq:empirical_Bayes_posterior}.

One of the often mentioned advantages of the Bayesian approach is that a posterior distribution provides statisticians with more than just point estimates.
For appropriate priors, if the data are distributed according to a fixed distribution in the model, then with probability going to one, the posterior concentrates around this distribution.
If this is the case, then for appropriate $q$ and $\lambda$ 
a small $\ell^2$-ball centred at the posterior mean $\bm{\hat{f}}_{\lambda,q}$, can capture most of the mass of the marginal posterior for $\bm{f}$.
Since for each $\lambda$ and $q$ the posterior is known explicitly, simulating a high-probability region of the posterior (a credible set) and using it as a \emph{frequentist} confidence set is of great appeal.
However, it is known that such credible sets do not always have good frequentist coverage properties.
This is more so the case when dealing with posteriors that adapt to the smoothness of the underlying signal to be estimated.
In this section we adapt a technique developed by~\cite{Szabo:2013} for the Gaussian white noise model, to study the behaviour of a specific Bayesian credible set for our regression model~\eqref{eq:model}.
Complicating factors in our setup are that the variance of the noise is not assumed to be known, and that we work with two empirically chosen parameters ($\lambda$ and $q$) simultaneously.
We outline the technique in some detail since it is of independent interest.

We remind that the marginal posterior $\Pi_{\lambda,q}^{\bm{f}}\big(\,\cdot\big|\bm{Y},\bm{x}\big)$ equals
\[
t_n\Big(
\bm{\hat{f}}_{\lambda,q},\, \hat\sigma_{\lambda,q}^2 \bm{S}_{\lambda,q}\Big), \quad\text{where}\quad
\hat\sigma_{\lambda,q}^2 = 
\frac1n\bm{Y}^T\big(\bm{I}-\bm{S}_{\lambda,q}\big)\bm{Y}.
\]
Representation properties of the multivariate $t$-distribution state
that if $\bm{f}$ is distributed like the marginal posterior above,
then
$\|\bm{f}-\bm{\hat{f}}_{\lambda,q}\|^2/\hat\sigma_{\lambda,q}^2$
 is distributed like $\bm{Z}^T\bm{S}_{\lambda,q}\bm{Z}/N$, where $\bm{Z}\sim N(\bm{0},\bm{I}_n)$, $N\sim \mathcal{X}_n^2$, and $N$ is independent of $\bm{Z}$.
Conclude that for any $\alpha\in(0,1)$ there exists a (known, deterministic) sequence $r_n(\lambda,q)$ such that for every $n,\lambda,q$,
\[
\Pi_{\lambda,q}^{\bm{f}}\Big( \|\bm{f}-\bm{\hat{f}}_{\lambda,q}\| \le \hat{\sigma}_{\lambda,q}\, r_n(\lambda,q) \Big|\bm{Y},\bm{x}\Big) = 1-\alpha.
\]
The level $\alpha$ is fixed for the remainder of this section.
It is therefore natural to consider, for any $L\ge1$, the empirical credible ball
\begin{equation}\label{eq:credible_set}
\hat{\mathcal{C}}_n(L) = \Big\{ \bm{f}\in\ell^2: \|\bm{f}-\bm{\hat{f}}_{\hat\lambda_{\hat q},\hat q}\| \le \hat{\sigma}_{\hat\lambda_{\hat q},\hat q}\, L\, r_n(\hat\lambda_{\hat q}, \hat q)  \Big\}.
\end{equation}
By definition of the sequence $r_n(\lambda,q)$, for any $L\ge1$,
\[
\Pi_{\hat\lambda_{\hat q},\hat q}^{\bm{f}}\big(\, \hat{\mathcal{C}}_n(L)\, \big|\bm{Y},\bm{x}\big) \ge 1-\alpha,
\]
such that we can sample functions in $\hat{\mathcal{C}}_n(L)$ by sampling functions from the posterior and then keeping those that satisfy the inequality in~\eqref{eq:credible_set}.
Such functions give a visual impression of the uncertainty in the
point estimate $\bm{\hat{f}}_{\hat\lambda_{\hat q}, \hat q}$ -- the
adaptive empirical Bayesian smoothing spline.
Note that since for each $\lambda$ and $q$ the posterior is known explicitly, simulating $\hat{\mathcal{C}}_n(L)$ is straightforward.

The following theorem is proved in Section~\ref{appendix:coverage} of Section~\ref{appendix:technical_results}.
\begin{theorem}\label{theorem:coverage}
Consider an interval $B=[\underline b,\bar b]$, where $1/2<\underline b \le \bar b < \infty$, and define $\mathcal{W}=\bigcup_{\beta\in B}\mathcal{W}_\beta(M)\cap\mathcal{M}(L,N,\rho)$.
Then, for all large enough $L$,
\begin{align}
\inf_{\bm{f}\in\mathcal{W}}
\mathbb{P}_f \big\{ \bm{f} \in \hat{\mathcal{C}}_n(L) \big\}				&= 1+ o(1),
\quad\text{and}\quad	\label{eq:honest_coverage}\\
\inf_{\bm{f}\in\mathcal{W}_\beta(M)}
\mathbb{P}_f \big\{r_n(\hat\lambda_{\hat q},\hat q) \le K n^{-\beta/(2\beta+1)} \big\}	&=1+o(1),	\quad \beta\in B,\label{eq:optimal_radius}
\end{align}
for some large enough constant $K>0$, depending on $L$, $M$, $\rho$, $\sigma^2$, $\underline b$, and $\bar b$.
\end{theorem}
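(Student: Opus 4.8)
The plan is to adapt the empirical-Bayes credible-set technique of~\cite{Szabo:2013} to model~\eqref{eq:model}, the new features being the unknown noise variance and the two simultaneously estimated hyper-parameters. The first step is to make the deterministic radius explicit. Since $N/n\to1$ almost surely and the Gaussian quadratic form $\bm{Z}^T\bm{S}_{\lambda,q}\bm{Z}$ has mean $\tr(\bm{S}_{\lambda,q})$ and variance $2\tr(\bm{S}_{\lambda,q}^2)\le2\tr(\bm{S}_{\lambda,q})$, a concentration bound for weighted sums of independent chi-squares gives, whenever $\tr(\bm{S}_{\lambda,q})\to\infty$ and up to a bounded factor otherwise,
\[
r_n(\lambda,q)^2 \;=\; \frac{\tr(\bm{S}_{\lambda,q})}{n}\{1+o(1)\}, \qquad
\tr(\bm{S}_{\lambda,q}) \;=\; q+\sum_{i>q}\frac{1}{1+\lambda n\eta_{q,i}} \;\asymp\; \lambda^{-1/(2q)},
\]
where the last equivalence uses the eigenvalue approximations of~\eqref{eq:diagonalisation} together with Lemma~\ref{lemma:trace}. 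Because $\tr(\bm{S}_{\lambda,q})$ is a power-type function of $\lambda$, $r_n$ changes only by a factor $1+o(1)$ under $o(1)$-relative perturbations of $\lambda$ and small perturbations of $q$ along the grid $\mathbb{Q}_n$, which lets us replace $(\hat\lambda_{\hat q},\hat q)$ by oracle values wherever convenient.

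For the radius bound~\eqref{eq:optimal_radius}, fix $f\in\mathcal{W}_\beta(M)$. Theorem~\ref{theorem:consistency_q} gives $\mathbb{P}_f\{\beta<\hat q\le\log n\}\to1$, and Theorem~\ref{th:lambda}, made uniform over the finite grid $\mathbb{Q}_n$ by a union bound over the deviation events underlying the stated asymptotic normality, gives $\hat\lambda_{\hat q}=\lambda_{\hat q}\{1+o_{\mathbb{P}_f}(1)\}$. On that event the lower bound~\eqref{eq:lambda_q_upper} yields $\lambda_{\hat q}^{-1/(2\hat q)}\le\{n\|f^{(\beta)}\|^2/(\sigma^2\kappa_{\hat q}(0,2))\}^{1/(2\beta+1)}\{1+o(1)\}$, and since $\|f^{(\beta)}\|^2\le M^2$ while $\kappa_q(0,2)$ is continuous and bounded away from zero for $q\in(1/2,\bar b]$, the right-hand side is at most $Cn^{1/(2\beta+1)}$ for a constant $C=C(M,\sigma^2,\underline b,\bar b)$, \emph{uniformly in} $\hat q>\beta$. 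Feeding this into the first step gives $r_n(\hat\lambda_{\hat q},\hat q)^2\le K^2 n^{-2\beta/(2\beta+1)}$ with probability tending to one, which is~\eqref{eq:optimal_radius}.

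For the coverage statement~\eqref{eq:honest_coverage}, decompose $\bm{\hat f}_{\lambda,q}-\bm{f}=(\bm{S}_{\lambda,q}-\bm{I}_n)\bm{f}+\sigma\bm{S}_{\lambda,q}\bm{\Phi}^T\bm{\epsilon}$; since $\bm{\Phi}^T\bm{\epsilon}\sim N(\bm{0},\bm{I}_n)$, the normalised squared distance splits into a squared bias $n^{-1}\sum_{i>q}B_i^2\{\lambda n\eta_{q,i}/(1+\lambda n\eta_{q,i})\}^2$ and a Gaussian quadratic form with mean $n^{-1}\sigma^2\tr(\bm{S}_{\lambda,q}^2)$. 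At $\lambda=\lambda_q$ the oracle equation~\eqref{eq:EElambda} forces (via Lemma~\ref{lemma:quadratic_forms}) the squared bias to be of the same order as $n^{-1}\sigma^2\tr(\bm{S}_{\lambda_q,q}^2)\asymp n^{-1}\tr(\bm{S}_{\lambda_q,q})\asymp r_n(\lambda_q,q)^2$, and the polished-tail condition~\eqref{eq:self_similar}, equivalently~\eqref{eq:lambda_q_self_similar} with $c_f$ bounded away from zero, makes this comparison uniform over $f\in\mathcal{W}$. Using Theorem~\ref{theorem:consistency_q} (so that with probability tending to one $\hat q=\min\{q\in\mathbb{Q}_n:q>\bar\beta\}$, or $\hat q$ grows with $n$ when $\bar\beta=\infty$), the grid-fineness condition $|q_{i-1}-q_i|=o\{1/\log n\}$, and the consistency of $\hat\lambda_{\hat q}$, I transfer this to the data-driven pair: the bias at $(\hat\lambda_{\hat q},\hat q)$ is at most a fixed constant times $r_n(\hat\lambda_{\hat q},\hat q)$, uniformly over $\mathcal{W}$. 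A Gaussian quadratic-form concentration bound makes the stochastic term at most $\{L'\,r_n(\hat\lambda_{\hat q},\hat q)\}^2$ with probability tending to one once $L'$ exceeds a constant, and $\hat\sigma_{\hat\lambda_{\hat q},\hat q}^2=n^{-1}\bm{Y}^T(\bm{I}_n-\bm{S}_{\hat\lambda_{\hat q},\hat q})\bm{Y}$ is bounded below by $\sigma^2/2$ with probability tending to one uniformly over $\mathcal{W}$ (its expectation is $n^{-1}\bm{f}^T(\bm{I}_n-\bm{S})\bm{f}+\sigma^2\{1-\tr(\bm{S})/n\}\ge\sigma^2\{1-o(1)\}$, and it concentrates). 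Combining by the triangle inequality gives $\|\bm{f}-\bm{\hat f}_{\hat\lambda_{\hat q},\hat q}\|\le\hat\sigma_{\hat\lambda_{\hat q},\hat q}\,L\,r_n(\hat\lambda_{\hat q},\hat q)$ with probability tending to one for $L$ large enough, uniformly in $f\in\mathcal{W}$, which is~\eqref{eq:honest_coverage}.

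The main obstacle is uniformity on two fronts at once: over the class $\mathcal{W}=\bigcup_{\beta\in B}\mathcal{W}_\beta(M)\cap\mathcal{M}(L,N,\rho)$, and over the random pair $(\hat\lambda_{\hat q},\hat q)$. The first is exactly what the polished-tail condition is for — it keeps the oracle bias-to-standard-deviation ratio bounded uniformly, so a single $L$ works for every $f\in\mathcal{W}$. The second requires upgrading the fixed-$q$ conclusions of Theorems~\ref{th:lambda} and~\ref{theorem:consistency_q} to hold simultaneously over $\mathbb{Q}_n$; this is affordable because $|\mathbb{Q}_n|=N_n$ is finite and the governing events are deviations of Gaussian quadratic forms, so a union bound costs little, while the fineness requirement ensures the oracle rate does not jump between consecutive grid points, so that landing on the first grid value exceeding $\bar\beta$ rather than on $\bar\beta$ itself is harmless.
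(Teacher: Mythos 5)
Your proposal is correct and takes essentially the same route as the paper's proof: the Szab\'o-style reduction to coverage at deterministic oracle values via Theorems~\ref{th:lambda} and~\ref{theorem:consistency_q} plus a union bound over the grid $\mathbb{Q}_n$, the bias--stochastic-term--$\hat\sigma$ decomposition combined by the triangle inequality with a large constant $L$, control of the bias at the oracle through the balancing implied by the estimating equation (which the paper formalises via Lemma~\ref{lemma:quad_trace_at_oracle} rather than Lemma~\ref{lemma:quadratic_forms}), and Chebyshev-type bounds giving $r_n^2(\lambda,q)\asymp\lambda^{-1/(2q)}/n$ for the radius statement. The remaining differences are matters of execution only (e.g.\ the paper restricts $\hat q$ to $\mathbb{Q}_n\cap(s_n,\bar\beta+1)$ instead of arguing uniformity up to $\log n$), not of approach.
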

Statement~\eqref{eq:honest_coverage} is usually referred to as~\emph{honest coverage},
while~\eqref{eq:optimal_radius} means that the credible ball
$\hat{\mathcal{C}}_n(L)$ has a radius of the optimal order.

Ideally one would like to take $\mathcal{W}=\ell^2$.
However, as mentioned in Section~\ref{sec:asymptotics:empirical_bayes_q}, it is known (cf.~\citealp{Low:1997}) that it is in general not possible to fulfil conditions~\eqref{eq:honest_coverage} and~\eqref{eq:optimal_radius} simultaneously if $\bar b/\underline b>2$ and $\mathcal{W}=\ell^2$.
To allow~\eqref{eq:honest_coverage} and~\eqref{eq:optimal_radius} to hold simultaneously for a wide (but bounded) range of smoothnesses one usually identifies 
``problematic'' functions which are either removed from the model 
(cf.~\citealp{Gine:2010,Szabo:2013}) or replaced with a collection 
of so called surrogates, ``non-problematic'' replacement functions that retain the main features of interest of the functions that were removed from the model 
(cf.~\citealp{Genovese:2008}).
Imposing an upper bound $\bar b$ on the smoothness is also necessary if we are to establish~\eqref{eq:honest_coverage} and~\eqref{eq:optimal_radius}.
Such a bound can also be justified from a computational standpoint.

The constant $L$, which is the multiplicative factor for the radius of the credible set $\hat{\mathcal{C}}_n$, must be taken appropriately large for~\eqref{eq:honest_coverage} and~\eqref{eq:optimal_radius} to hold.
It is possible to provide an explicit lower-bound $L$ by inspecting the constants in the proof of Theorem~\ref{theorem:coverage}: 
for all sufficiently large $n$ we may take
\[
L \ge
1 + \big\{\kappa_q(0,2)/\kappa_q(0,1)\big\}^{1/2} = 1 + \big\{(2q-1)/(2q)\big\}^{1/2},
\]
so that uniformly over $q$, $L\ge2$. 
Inspection of the computations in Section~\ref{appendix:technical_results} shows that the level $\alpha$ appears associated with lower order terms so that $L$ does not depend on $\alpha$, even if we were to allow $L$ to depend on $q$.
Because of this one does not get exactly coverage $1-\alpha$ and the credible sets $\hat{\mathcal{C}}_n$ are always conservative (in that the asymptotic probability of coverage is $1>1-\alpha$).\\

It follows, in fact, from the inequalities established in the proof of Theorem 3 that the posterior distribution contracts (with respect to $\|\cdot\|$) around the true regression function at the optimal rate for $f\in\mathcal{W}_\beta$, when $f$ is indeed in this space.
To establish this, condition~\eqref{eq:self_similar} is not needed.

\section[Comparison with GCV]{Comparison with frequentist smoothing splines}\label{sec:GCV}%

In the frequentist framework there are several competing ways of selecting the smoothing parameter $\lambda$ (for a fixed $q$).
Typically, $\lambda$ is selected as a minimiser of some asymptotically unbiased estimator of the risk $\mathbb{E}\|\bm{\hat f}_{\lambda,q}-\bm{f}\|^2$.
Generalised cross validation, Mallow's $C_p$ and Akaike's information
criterion are particular examples of such estimates; let
$\hat\lambda_f$ denote a minimiser of one of such criteria. 
If the regression function $f$ belongs to $\mathcal{W}_{\beta}(M)$, and we set $q\in[\beta/2,\beta]$ such that $\beta/q\in[1,2]$, then $\hat\lambda_f$ adapts.
This means that generalised cross-validated smoothing splines adapt to the unknown smoothness $\beta$ in the sense that the estimator $\hat\lambda_f$ is consistent for the oracle
\begin{equation}\label{eq:GCV_oracle}
\lambda_f \ge \left[n\frac{\|f^{(\beta)}\|^2}{\sigma^2\kappa_{q}(1,2)}\{1+o(1)\}\right]^{-2q/(2\beta+1)}.
\end{equation}
Furthermore, Theorem 3 of~\cite{Krivobokova:2013} states that
\[
\lambda_f^{-1/(4q)}\big(\hat\lambda_f/\lambda_f - 1\big) \stackrel{d}{\longrightarrow}
\mathcal{N}\bigg[0,\; \frac{2\kappa_q(4,2)}{\big\{4\kappa_q(1,2)-3\kappa_q(1,3)\big\}^2}\bigg],
\quad\text{ as } n\to\infty.
\]
so that the asymptotic variance above can be much larger than that
associated with the empirical Bayes estimate $\hat\lambda_{\beta}$ for
the range of values of $q$ for which the GCV $\hat\lambda_f$ adapts,
see \cite{Krivobokova:2013} for more discussion and simulations.

In this section we investigate how the asymptotic variance of $\hat\lambda_f$ compares to that of $\hat\lambda_{\hat q}$;
or, more specifically, we compare the distances of $\hat{f}_{\hat\lambda_f,q}$ and of $\hat{f}_{\hat\lambda_{\hat q}, \hat q}$ to the true regression funtion.
We use the credible sets from the previous section as a proxy for this comparison:
we bound the probability that the regression function belongs to a ball centered at $\hat{f}_{\hat\lambda_f,q}$ with a radius that assures coverage for the Bayesian credible set $\hat{\mathcal{C}}_n(L)$ -- by construction, that is the radius of $\hat{\mathcal{C}}_n(1)$, since we are only interested here in coverage, and not honest coverage.

The proof of the following theorem can be found in Section~\ref{appendix:GCV}.
\begin{theorem}\label{theorem:GCV}
Assume that the regression function $f$ belongs to $\mathcal{W}_\beta(M)$, $\beta>1/2$, such that with probability going to 1 the radius of the credible set $\hat{\mathcal{C}}_n(1)$ is $\sigma\, r_n(\lambda_\beta,\beta)$.
Define
\[
\hat{\mathcal{D}}_n = \hat{\mathcal{D}}_n\big(\bm{\hat f}_{\hat\lambda_f,q}\big) =
\Big\{ \bm{f}\in\ell^2: \|\bm{f}-\bm{\hat f}_{\hat\lambda_f,q}\|\le \sigma\, r_n(\lambda_\beta, \beta) \Big\}.
\]
Then, for any $q$ such that $\hat\lambda_f$ adapts to the smoothness $\beta$, i.e., for any $q\in[\beta/2,\beta]$,
\[
\mathbb{P}_f\Big\{ \bm{f}\in \hat{\mathcal{D}}_n\big(\bm{\hat f}_{\hat\lambda_f,q}\big) \Big\} = o(1), \quad n\to\infty.
\]
\end{theorem}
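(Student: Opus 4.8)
The plan is to show that, with $\mathbb{P}_f$-probability tending to one, $\|\bm{f}-\bm{\hat f}_{\hat\lambda_f,q}\|>\sigma\,r_n(\lambda_\beta,\beta)$, which is exactly the complement of the event $\{\bm f\in\hat{\mathcal D}_n\}$. The first step is to strip off the randomness of the GCV smoothing parameter. Expanding $\bm{\hat f}_{\hat\lambda_f,q}-\bm{\hat f}_{\lambda_f,q}$ in the Demmler--Reinsch basis and using that $\hat\lambda_f/\lambda_f-1=O_{\mathbb{P}_f}(\lambda_f^{1/(4q)})$ (the consistency and asymptotic normality of $\hat\lambda_f$ from Theorem~3 of~\cite{Krivobokova:2013}), together with Lemma~\ref{lemma:trace} to bound $\|\partial_\lambda\bm{\hat f}_{\lambda_f,q}\|$ and the cross term with $\bm{\hat f}_{\lambda_f,q}-\bm f$, one gets $\|\bm f-\bm{\hat f}_{\hat\lambda_f,q}\|^2=\|\bm f-\bm{\hat f}_{\lambda_f,q}\|^2+o_{\mathbb{P}_f}\big(r_n(\lambda_\beta,\beta)^2\big)$, so it suffices to compare $\|\bm f-\bm{\hat f}_{\lambda_f,q}\|$ with $\sigma\,r_n(\lambda_\beta,\beta)$ for the deterministic oracle $\lambda_f$.

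Next I would pin down both quantities at leading order. Decomposing $\|\bm f-\bm{\hat f}_{\lambda_f,q}\|^2$ into squared bias $\|(\bm I_n-\bm S_{\lambda_f,q})\bm f\|^2$, a stochastic term $\sigma^2 n^{-1}\bm\epsilon^{T}\bm S_{\lambda_f,q}^{2}\bm\epsilon$, and a mean-zero cross term, Lemmas~\ref{lemma:trace}--\ref{lemma:quadratic_forms} show the cross term is negligible and the quadratic form concentrates around $\sigma^2 n^{-1}\tr\bm S_{\lambda_f,q}^{2}$; using the first-order condition behind~\eqref{eq:GCV_oracle}, which couples the bias of $\bm{\hat f}_{\lambda_f,q}$ to its variance, one obtains $\|\bm f-\bm{\hat f}_{\lambda_f,q}\|^{2}=c_1\,n^{-2\beta/(2\beta+1)}\{1+o_{\mathbb{P}_f}(1)\}$ for an explicit $c_1=c_1(\beta,q,f)$. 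For the radius, since $\bm Z^{T}\bm S_{\lambda_\beta,\beta}\bm Z/N$ concentrates (numerator and denominator both do, and $\tr\bm S_{\lambda_\beta,\beta}\to\infty$), its $(1-\alpha)$-quantile satisfies $r_n(\lambda_\beta,\beta)^{2}=n^{-1}\tr\bm S_{\lambda_\beta,\beta}\{1+o(1)\}$, which by~\eqref{eq:Bayes_oracle} and Lemma~\ref{lemma:trace} equals $c_2\,n^{-2\beta/(2\beta+1)}\{1+o(1)\}$ for an explicit $c_2=c_2(\beta,f)$.

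The decisive step is then the strict inequality $c_1>c_2$, uniformly over $f\in\mathcal{W}_\beta(M)$ consistent with the hypothesis and over $q\in[\beta/2,\beta]$. Two effects should drive it: $\hat\lambda_f$ minimises an (asymptotically) unbiased risk estimate, so $\bm{\hat f}_{\lambda_f,q}$ balances a bias term against its variance and is charged for both, whereas the empirical-Bayes credible radius reflects only the posterior spread of the Bayes-efficient order-$\beta$ smoother; moreover $\lambda_f$ is calibrated through $\kappa_q(1,2)$ rather than $\kappa_\beta(0,2)$, which corresponds to undersmoothing relative to $\lambda_\beta$ and inflates the variance contribution, and for $q<\beta$ there is an additional loss in the bias constant from the mismatch between the penalty order and the smoothness of $f$. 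The dependence on $f$ enters $c_1$ and $c_2$ only through the same energy functional — $\|f^{(\beta)}\|$, or its polished-tail analogue as in~\eqref{eq:lambda_q_self_similar} — so it cancels at leading order and the conclusion is uniform. I expect this last step to be the main obstacle: it is a careful bookkeeping of the constants $\kappa_q(\cdot,\cdot)$, and for non-self-similar $f$ it relies on the sharp lower bound on the bias of $\bm{\hat f}_{\lambda_f,q}$ supplied by Lemma~\ref{lemma:quadratic_forms} together with the polished-tail condition~\eqref{eq:self_similar}.
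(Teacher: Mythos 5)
Your plan follows the same route as the paper's proof: reduce to the oracle $\lambda_f$ using $\hat\lambda_f/\lambda_f-1=O_{\mathbb{P}}\big(\lambda_f^{1/(4q)}\big)$ (the paper handles this with an auxiliary independence device, a Taylor expansion of $\bm S_{\hat\lambda_f,q}$ around $\lambda_f$, and Lemma~\ref{lemma:variance}), show that $\|\bm f-\bm{\hat f}_{\hat\lambda_f,q}\|^2$ concentrates around its expectation (variance of smaller order than the squared mean, then Chebyshev), identify $r_n(\lambda_\beta,\beta)^2=\kappa_\beta(0,1)\lambda_\beta^{-1/(2\beta)}n^{-1}\{1+o(1)\}$, and reduce the statement to a comparison of constants after the $\|f^{(\beta)}\|^2$-dependence cancels. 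Up to that point your outline is sound and coincides with the paper's argument.

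The genuine gap is the decisive inequality $c_1>c_2$, which you only motivate heuristically and defer as ``careful bookkeeping'': this is the actual content of the theorem and the proof is not complete without it. The paper closes it in two concrete steps. First, a lower bound for the loss at the oracle obtained from the $C_p$/GCV first-order condition via Lemma~\ref{lemma:quad_trace_at_oracle},
\[
\mathbb{E}\big(\|\bm{\hat f}_{\hat\lambda_f,q}-\bm f\|^2\big)\ \ge\ \frac{\sigma^2}{n}\big\{2\kappa_q(0,2)-\kappa_q(0,3)\big\}\lambda_f^{-1/(2q)}\{1+o(1)\},
\]
and second, after relating $\lambda_f^{-1/(2q)}$ to $\lambda_\beta^{-1/(2\beta)}$ through \eqref{eq:Bayes_oracle} and \eqref{eq:GCV_oracle} (the factor $\{n\|f^{(\beta)}\|^2/\sigma^2\}^{1/(2\beta+1)}$ cancels, exactly as you anticipate), the purely numerical inequality
\[
2\kappa_q(0,2)-\kappa_q(0,3)\ >\ \kappa_\beta(0,1)\big\{\kappa_q(1,2)\big/\kappa_\beta(0,2)\big\}^{1/(2\beta+1)},
\qquad q\in[\beta/2,\beta],
\]
which follows from properties of the gamma function. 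Without establishing these two displays, the claimed strict separation between the loss of $\bm{\hat f}_{\hat\lambda_f,q}$ and the credible radius remains a conjecture. Two smaller corrections: the polished-tail condition plays no role in this proof (the hypothesis that the radius is $\sigma\,r_n(\lambda_\beta,\beta)$ already encodes what is needed, and the lower bound on the oracle loss comes from the variance/trace term coupled to the bias by the first-order condition, not from a polished-tail bias bound), and no uniformity over $f\in\mathcal{W}_\beta(M)$ is required since the statement is pointwise in $f$.
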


The conclusion is that $\hat\lambda_f$ can somewhat adapt to the smoothness of the regression function, but at the cost of a high asymptotic variance.
Using the fact that for each fixed $q$, the empirical Bayes selected $\hat\lambda_q$ has much lower asymptotic variance, we show that the smoothing parameter $\hat\lambda_{\hat q}$ outperforms $\hat\lambda_f$:
if the centre of the empirical Bayes credible ball (the adaptive
empirical Bayesian smoothing spline) is replaced by a frequentist smoothing
spline with the smoothing parameter $\hat\lambda_f$, then the coverage property is lost.
Note that even if this were not the case, the empirical Bayes smoothing spline would still adapt to a wider range of smoothnesses than the risk-based smoothing spline (which adapts only within a $[q,2q]$ range).

\section[Numerical simulations]{Numerical
  simulations}\label{sec:simulations}
The following simulation study aims to verify our theoretical findings
in finite samples and compare frequentist and
proposed adaptive empirical Bayesian smoothing splines in terms of the
average mean squared error.

In all settings the Monte-Carlo sample $M=1000$, the sample size is $n=1000$, the design points are
fixed and equidistant $x=i/n$, $i=1,\ldots,n$ and $\sigma=0.1^2$. The results for other sample
sizes and higher $\sigma$s were found conceptually similar and are not
reported. We consider two mean functions $f_j$, $j=1,2$ that are
scaled by its range. Function $f_1$, shown on the top left plot of
Figure \ref{figure:f1}, has a known decay of its
Demmler-Reinsch coefficients, namely we set
\[
f_1(x)=\sum_{i=q+1}^n\psi_{\beta,i}(x)(i+1)^{-\beta}\cos(2i),\;\;\beta=3,
\]
where $\psi_{\beta,i}$ is the $i$th basis function of the Demmler-Reinsch
basis of degree $\beta=3$, defined in
(\ref{eq:def_demmler_basis_functions1}). The second function is
analytical $f_2(x)=\cos(5\pi x)$ (Figure \ref{figure:f2}) with an exponential decay of its
Demmler-Reinsch coefficients.

We estimated both functions by smoothing splines with the empirical
Bayesian smoothing parameter $\hat\lambda_{\hat{q}}$ and with the
smoothing parameter $\hat\lambda_f$.
To get $\hat\lambda_{\hat{q}}$, first $\hat\lambda_q$ is obtained for
$q=1,2,\ldots,6$ as a solution to $T_\lambda(\lambda, q)=0$ and then
$\hat{q}$ is set to the nearest integer
to $q^*$ such that
$T_q(\hat\lambda_q,q^*)=0$. 
\begin{table}[b!]
\begin{tabular}{l|cccccc}
&&GCV&&&&\\
&EB& $q=2$& $q=3$& $q=4$& $q=5$&
                                                                  $q=6$\\\hline
 $f_1$& &&&&&\\
$\mbox{mean}(\hat\lambda)$&$5.7\cdot10^{-12}$& $1.8\cdot10^{-08}$&
                                                                  $7.1\cdot10^{-12}$& $1.8\cdot10^{-15}$& $4.5\cdot10^{-19}$& $1.2\cdot10^{-22}$\\
$\mbox{var}(\hat\lambda)$&$5.0\cdot10^{-25}$& $2.3\cdot10^{-17}$&
                                                                   $1.2\cdot10^{-23}$&
                                                                                       $1.7\cdot10^{-30}$&
                                                                                                           $2.0\cdot10^{-37}$&$ 2.4\cdot10^{-44}$ \\\hline
$A(\hat\lambda)$&$2.5\cdot10^{-06}$&$ 2.9\cdot10^{-06}$& $2.6\cdot10^{-06}$& $2.7\cdot10^{-06}$&$ 2.9\cdot10^{-06}$&$ 2.9\cdot10^{-06}$\\ 
$R$&
                                                                   $-$&$1.147912$
       &$1.032582$& $1.075522$ &$1.119951$& $1.139055$ 
\\\hline
$f_2$& &&&&&\\
$\mbox{mean}(\hat\lambda)$&$1.7\cdot10^{-19}$& $5.9\cdot10^{-09}$&
                                                                  $1.6\cdot10^{-11}$& $3.4\cdot10^{-14}$& $3.6\cdot10^{-17}$& $2.8\cdot10^{-19}$\\
$\mbox{var}(\hat\lambda)$&$1.1\cdot10^{-36}$& $2.0\cdot10^{-18}$&
                                                                   $2.4\cdot10^{-23}$&
                                                                                       $4.8\cdot10^{-28}$&
                                                                                                           $4.0\cdot10^{-34}$&$ 2.4\cdot10^{-38}$ \\\hline
$A(\hat\lambda)$&$1.5\cdot10^{-06}$& $3.8\cdot10^{-06}$& $2.1\cdot10^{-06}$& $1.9\cdot10^{-06}$& $1.8\cdot10^{-06}$& $1.6\cdot10^{-06}$\\ 
$R$&
                                                                   $-$  &$2.483676$& $1.406107$ &$1.266803$& $1.164454$ &$1.040263$
\end{tabular}
\caption{Simulation results for functions $f_1$ and $f_2$.}
\label{table:sim}
\end{table}
The smoothing parameter
$\hat\lambda_f$ has been calculated by
generalized cross-validation for different values of $q=2,\ldots,6$.

We compare the resulting smoothing parameter estimators
$\hat\lambda_{\hat{q}}$ and $\hat\lambda_f$ for $q=2,\ldots,6$ in
terms of the sample mean and sample variance in the Monte Carlo sample
(see 
$\mbox{mean}(\hat\lambda)$ and $\mbox{var}(\hat\lambda)$ in Table
\ref{table:sim}). 
Further, we compare the empirical average mean
squared error (AMSE)
  of the resulting estimators, that is 
\[
A(\hat\lambda)=\frac{1}{Mn}\sum_{i=1}^M\sum_{j=1}^n\left\{\hat{f}_i(x_j,\hat\lambda)-f(x_j)\right\}^2,
\]
where $\hat{f}_i$ denotes a smoothing spline estimator in the $i$th Monte
Carlo simulation. The values of $A(\hat\lambda)$ are
given in Table \ref{table:sim} together with the AMSE ratio,
which is defined as $R=A(\hat\lambda_f)/A(\hat\lambda_{\hat{q}})$, so
that values of $R>1$ imply superiority of the adaptive empirical
Bayesian estimator.

Let us first consider the simulation results for $f_1$. 
According to the theoretical
results on $\hat\lambda_f$ discussed in Section \ref{sec:GCV}, as long
as $q\geq\beta/2$, its oracle $\lambda_f\geq
\mbox{const}\;n^{-2q/(2\beta+1)}$, leading to $\mathbb{E}\|\bm{\hat
  f}(\hat\lambda_f) - \bm{f}\|^2 \asymp
n^{-2\beta/(2\beta+1)}$. For $\beta=3$ we expect
to observe that $\lambda_f$ for $q=2$ is larger than for $q=3$ and
goes very fast to zero for $q>\beta=3$. Moreover, $\hat{f}(\hat\lambda)$
for $\hat\lambda_{\hat{q}}$ and $\hat\lambda_f$ for $q=2,\ldots,6$ should
all have the same convergence rates and differ only in constants. Results
of the simulations given in Table \ref{table:sim} confirm these
theoretical findings. We observe also that the means of $\hat\lambda_{\hat{q}}$ and $\hat\lambda_f$ for
$q=3$ are of the same rate, but the variance of $\hat\lambda_f$ is
much higher, which is also visibile in the boxplots, given on the
bottom left plot of Figure \ref{figure:f1}. The differences between
$A(\hat\lambda)$, shown on the bottom right plot of Figure
\ref{figure:f1}, are less pronounced, but from the AMSE ratio given in
Table \ref{table:sim} we find that the empirical Bayesian smoothing spline estimator
outperforms the frequentist smoothing spline uniformly in
$q$. The smallest difference is observed between
$A(\hat\lambda_{\hat{q}})$ and $A(\hat\lambda_f)$ for the true
$q=\beta=3$, which is, of course, unknown in practice and is not
estimated in the frequentist framework. 
\begin{figure}[t!]
\begin{tabular}{cc}
\vspace{0.3cm}\hspace{-6cm}(a) &\hspace{-3.5cm}(b) \\
\includegraphics[width=0.45\textwidth]{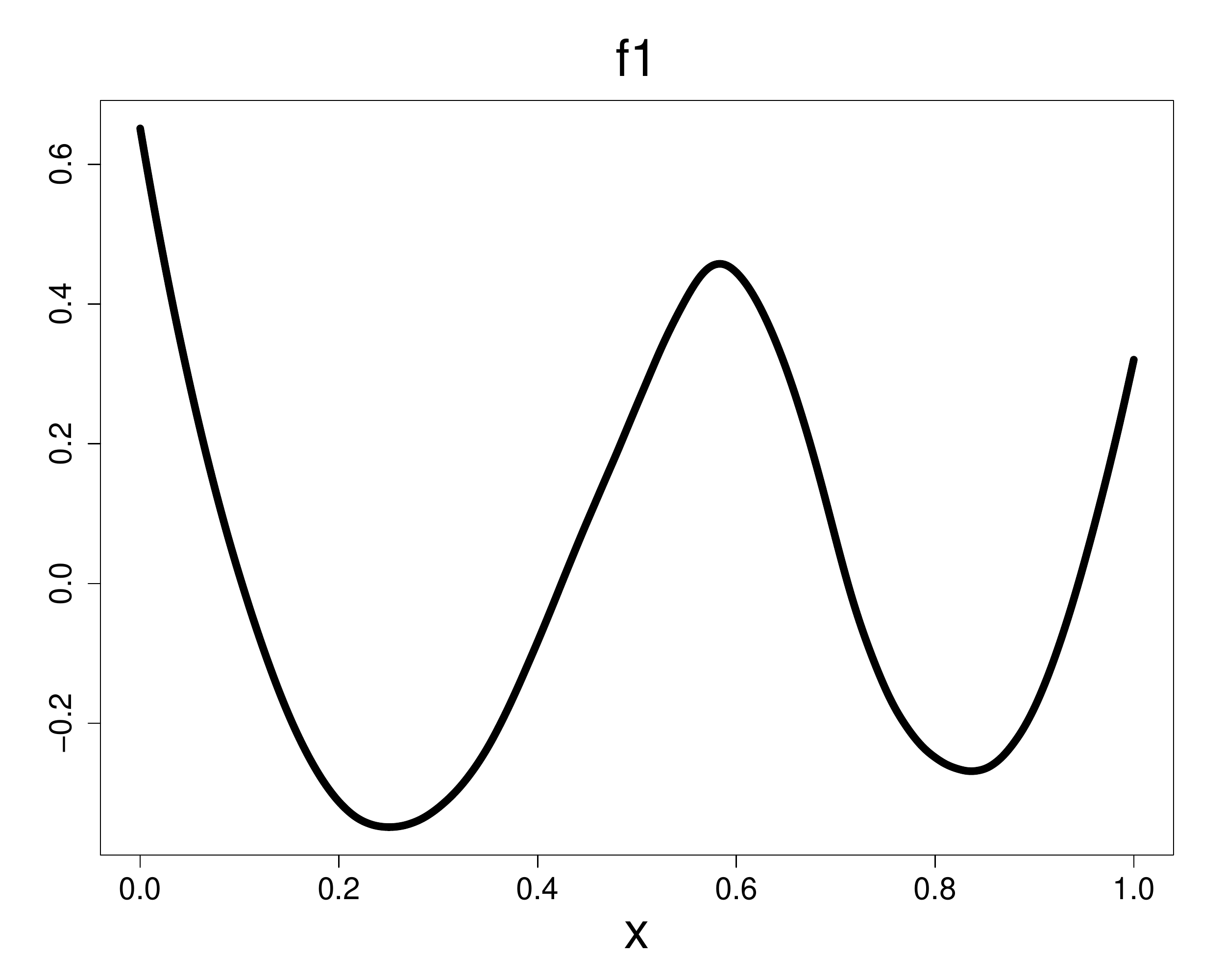}
\includegraphics[width=0.45\textwidth]{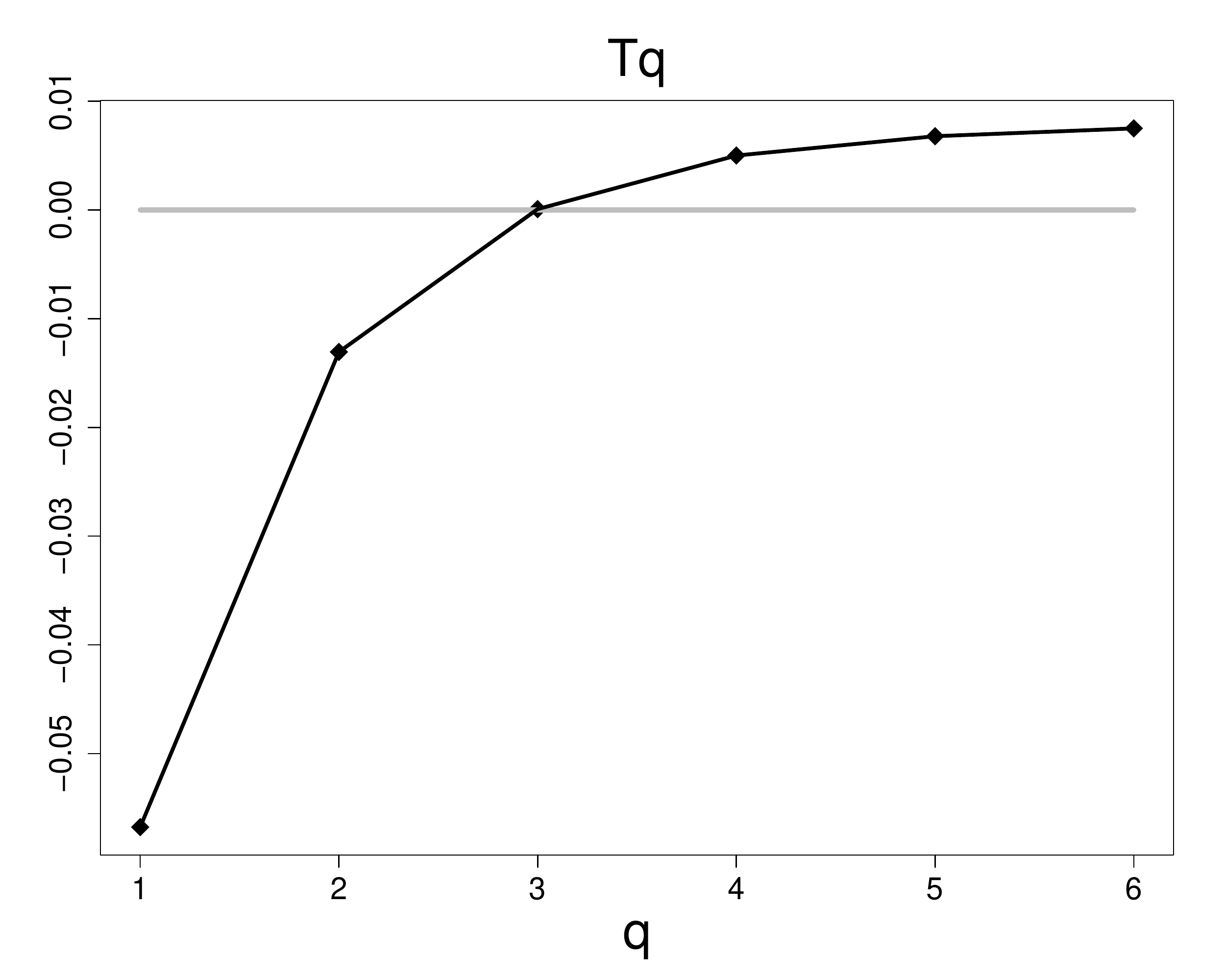}\\
\vspace{0.3cm}\hspace{-6cm}(c) &\hspace{-3.5cm}(d) \\
\includegraphics[width=0.45\textwidth]{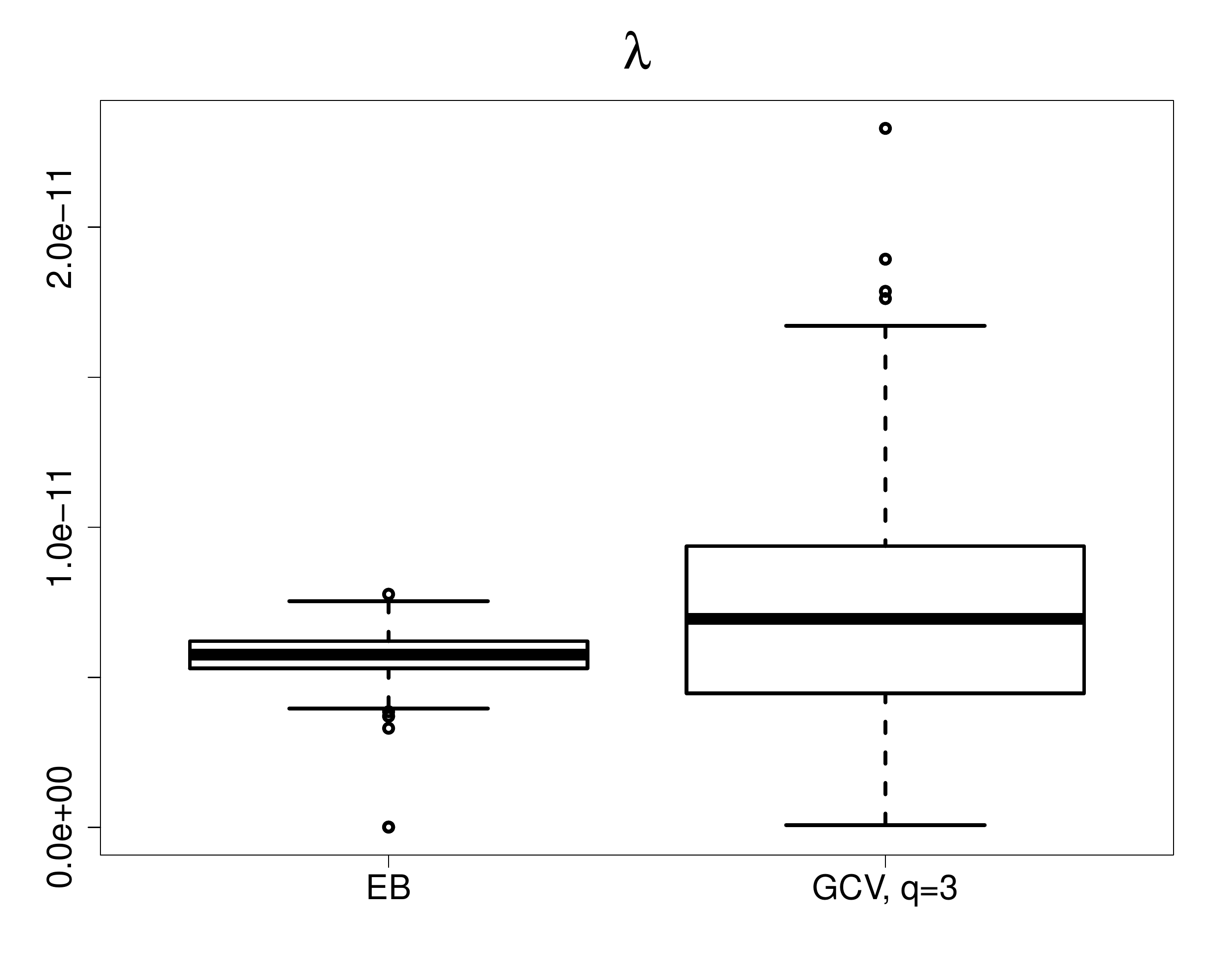}
\includegraphics[width=0.45\textwidth]{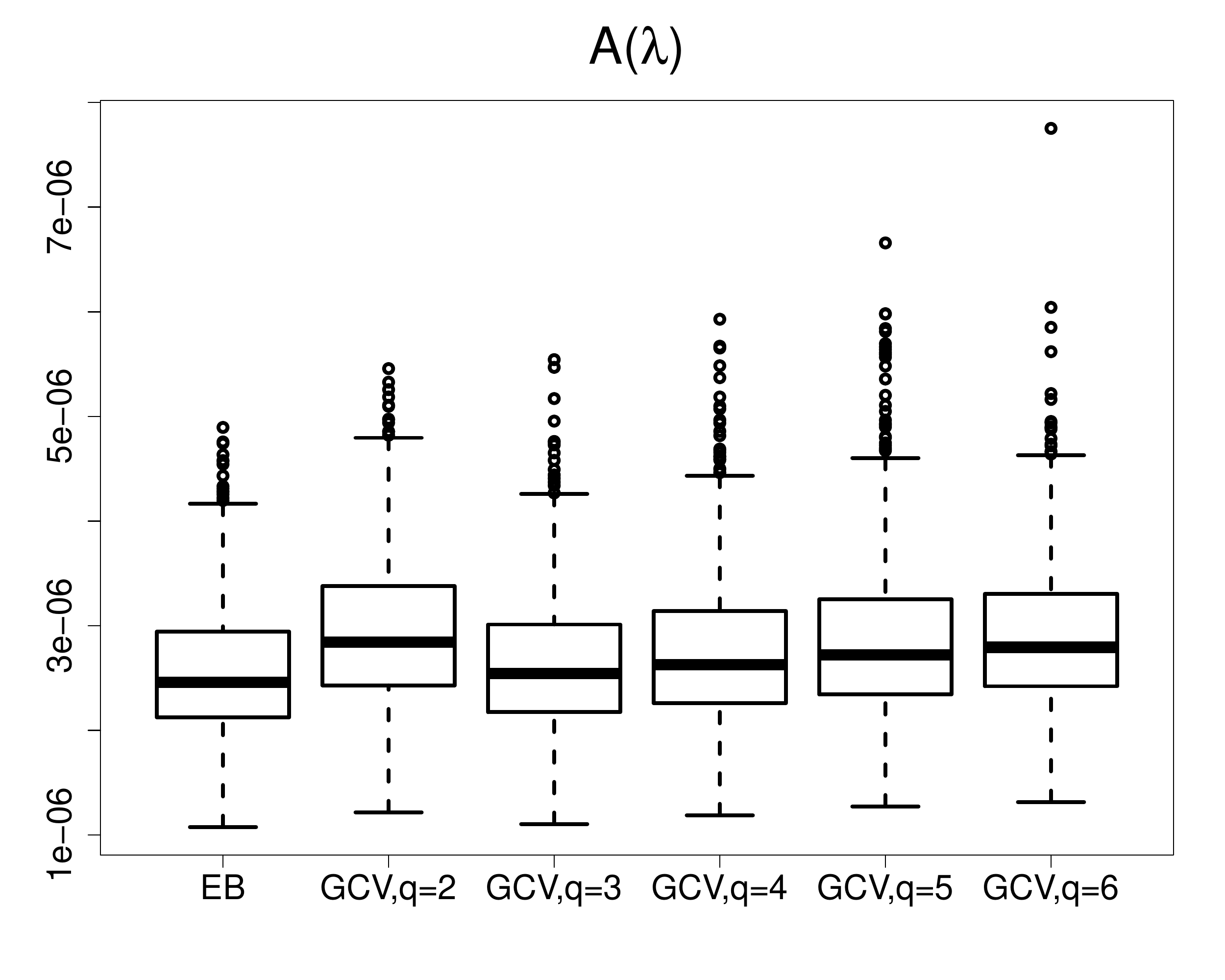}
\end{tabular}
\caption{(a) Function $f_1$, (b) Criterion $T_q(\hat\lambda_q,q)$ for
  $f_1$, (c) Boxplots of $\hat\lambda_{\hat{q}}$ and $\hat\lambda_f$ obtained with GCV and
  $q=\beta=3$, (d) Boxplots of $A(\hat\lambda_{\hat{q}})$ (EB) and
  $A(\hat\lambda_f)$}
\label{figure:f1}
\end{figure}
Finally, we
remark that the empirical
Bayesian estimator of $q$ appeared to be very reliable for this value
of $\beta$: out of $M=1000$ samples in $998$ cases $\hat{q}=3$ has
been obtained and in two cases $\hat{q}=4$. The estimating equation
$T_q(\hat\lambda_q,q)$ is shown on the top right plot of
Figure \ref{figure:f1}. 

\begin{figure}[t!]
\begin{tabular}{cc}
\vspace{0.3cm}\hspace{-6cm}(a) &\hspace{-3.5cm}(b) \\
\includegraphics[width=0.45\textwidth]{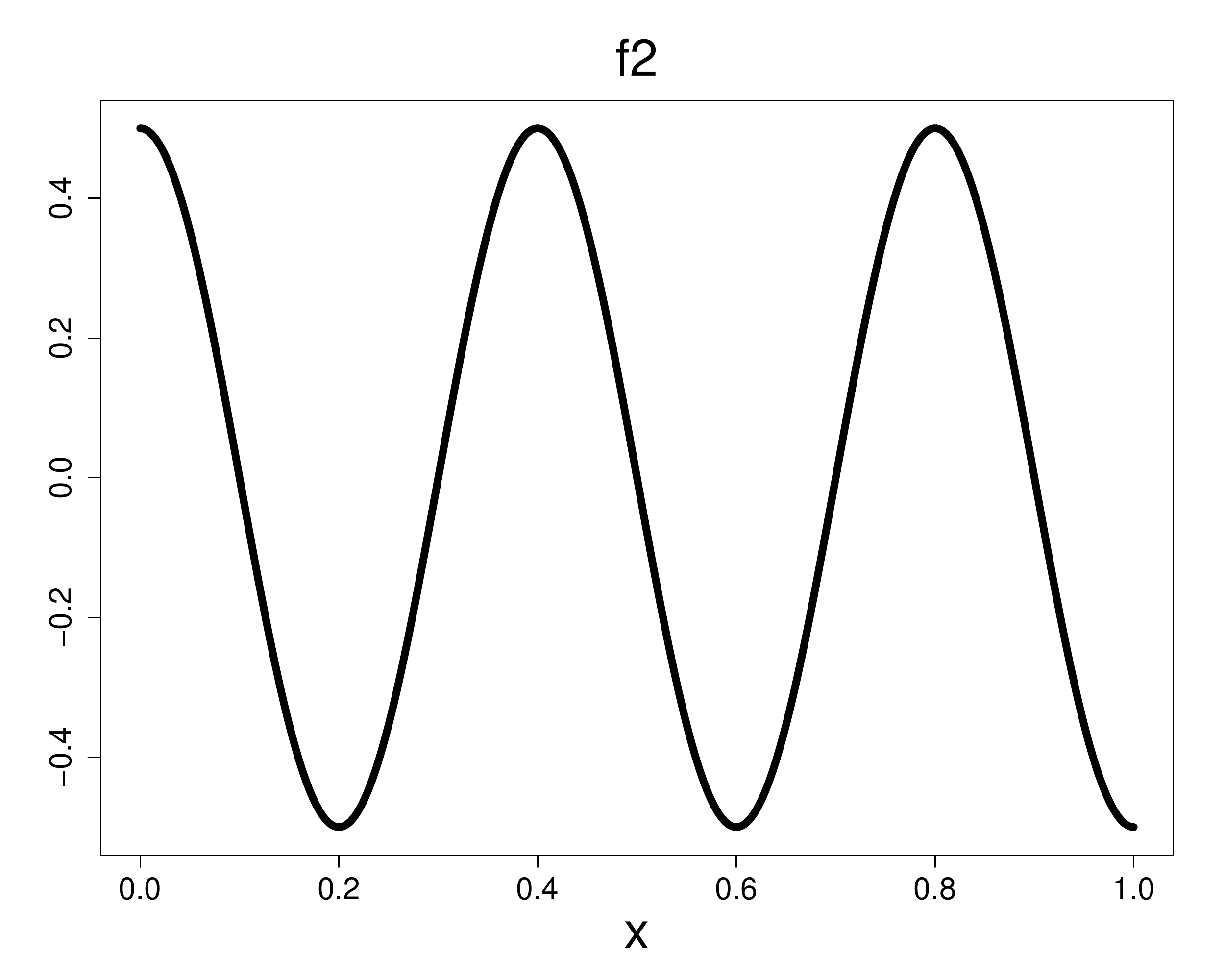}
\includegraphics[width=0.45\textwidth]{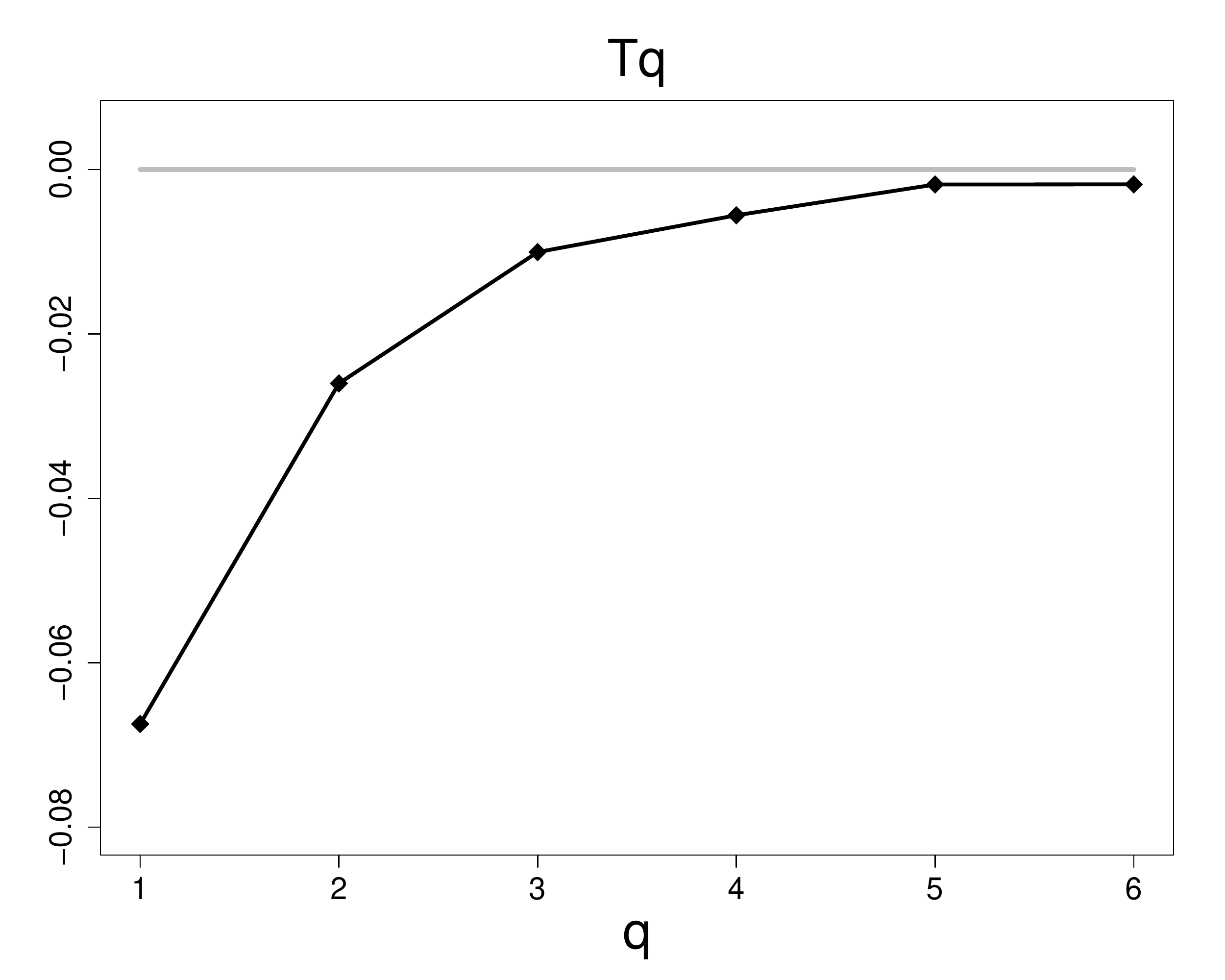}\\
\vspace{0.3cm}\hspace{-6cm}(c) &\hspace{-3.5cm}(d) \\
\includegraphics[width=0.45\textwidth]{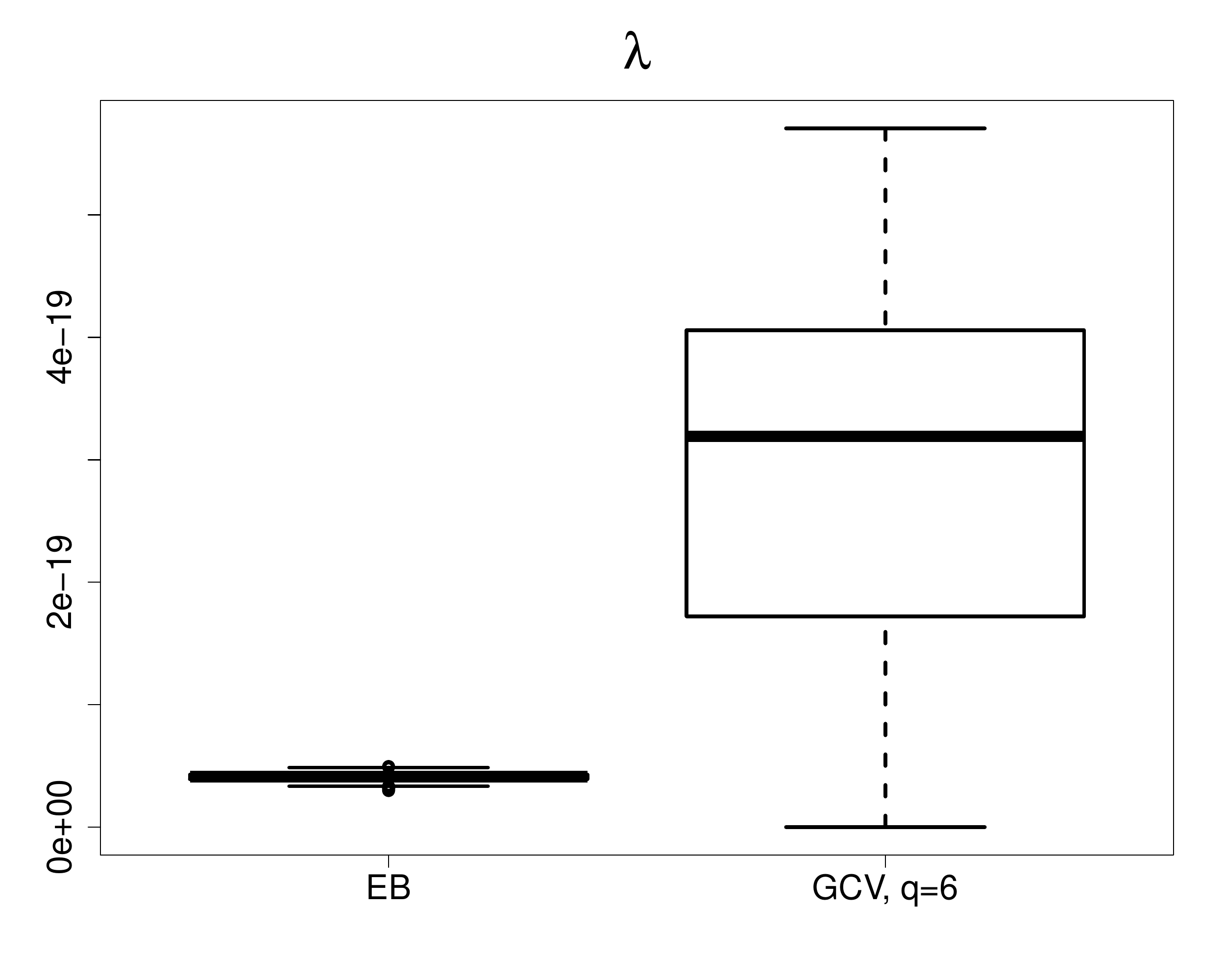}
\includegraphics[width=0.45\textwidth]{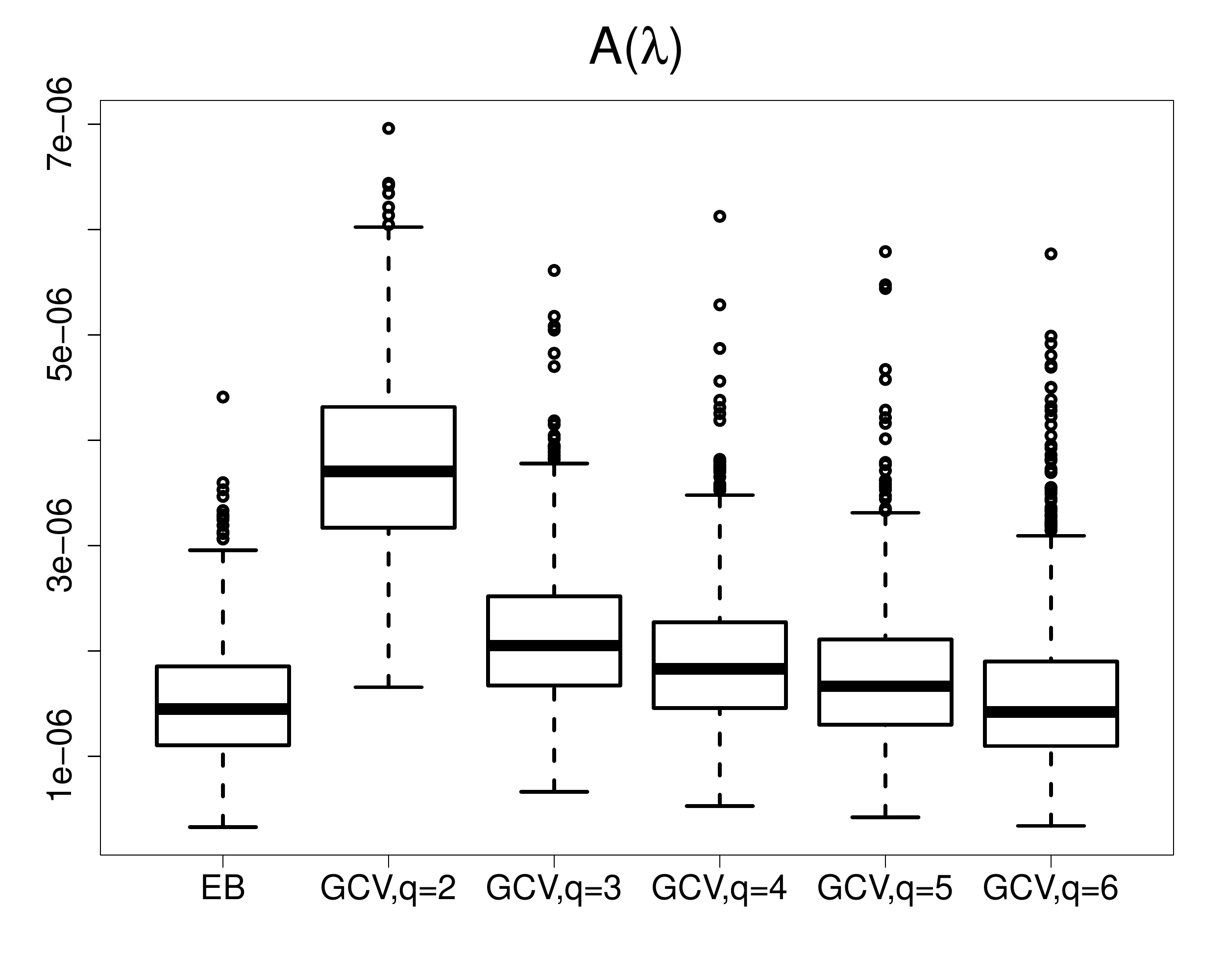}
\end{tabular}
\caption{(a) Function $f_2$, (b) Criterion $T_q(\hat\lambda_q,q)$ for
  $f_2$, (c) Boxplots of $\hat\lambda_{\hat{q}}$ and $\hat\lambda_f$ obtained with GCV and
  $q=6$, (d) Boxplots of $A(\hat\lambda_{\hat{q}})$ (EB) and
  $A(\hat\lambda_f)$}
\label{figure:f2}
\end{figure}

Now we consider the simulation results for the analytical function
$f_2(x)=\cos(5\pi x)$ which is known to have exponentially decaying
Demmler-Reinsch coefficients. In this setting, criterion
$T_q(\hat\lambda_q,q)$ should remain negative, asymptotically
approaching zero from below, which is visible in the top right plot of
Figure \ref{figure:f2}. We estimated $\hat\lambda_{\hat{q}}$ setting $\hat{q}=6$ if
$T_q(\hat\lambda_q,q)<0$ for $q=6$. For function $f_2$ estimator
$\hat{q}$ is slightly more variable, resulting in $\hat{q}=6$ in $986$
cases and in $\hat{q}=5$ in $14$ cases. This can be due to the fact that in small samples it is difficult
to distinguish between an exponential decay $\exp(-\pi i)$ and a decay
with $(\pi i)^{-q}$, $q>5$, $i=1,\ldots,n$. Also, $\hat\lambda_f$ for
$q=2,\ldots,6$ with generalized cross-validation have been calculated. It appears that
$\hat\lambda_{\hat{q}}$ and $\hat\lambda_f$ with $q=6$ have the same rate, but
again, $\hat\lambda_f$ is much more variable. For these smoothing
parameters we also observe, that the corresponding AMSE ratio is
closest to one. In general, for this function $f_2$ the adaptive empirical
Bayesian smoothing spline estimator again outperforms frequentist splines
uniformly in $q$ with the largest AMSE ratio of about $2.5$ for $q=2$.

Finally, we remark on the implementation of the procedure. It
is well-known that the spline based basis with knots at observations becomes
numerically unstable for higher $q$s. In fact, it seems impossible to get numerically
stable Demmler-Reinsch basis for the natural spline space for
$q>3$ with usual approaches. Instead, we relied on an approximation
based on the Demmler-Reinsch basis (\ref{eq:def_demmler_basis_functions1}) for
${\cal{W}}_q$. The details of this approach will be
reported elsewhere, but the implementation in R is available
from the authors on request.
\section[Conclusions]{Conclusions}\label{sec:conclusions}%

The selection of the order of smoothing splines in non-parametric regression is a topic mostly absent from the literature.
The empirical Bayes method is shown to provide an adequate framework to produce data driven choices for this parameter.
Although the dependence of the prior on the parameter $q$ -- which
controls the order of the smoothing spline -- is rather implicit, if
the regression function has a well defined smoothness (determined with
the help of so the so called polished-tail condition), then $\hat q$ is consistent
and we identify the smallest Sobolev space containing the regression function.
Hence, our adaptive empirical Bayesian smoothing spline estimator (which is the mean of the empirical Bayes posterior) adapts to the underlying smoothness of the signal.

High probability regions of the empirical posterior are shown to have good frequentist coverage properties.
For a large class of functions the size of these regions is shown to adapt to the underlying smoothness of the signal, effectively quantifying the amount of uncertainty of the empirical Bayesian smoothing spline estimator.
These results are used to show that frequentist smoothing splines are outperformed by empirical Bayesian smoothing splines.


\section*{Acknowledgements}
We would like to thank the editor, the associate editor and both
referees for the extremely useful remarks, that helped to improve the
paper substantially. We express also our gratitude to Francisco
Rosales for his implementation of the Demmler-Reinsch basis and
helpful discussions.

\section[Technical details]{Technical results and proofs}\label{appendix:technical_results}

\subsection{Demmler-Reinsch basis and estimating equations}\label{appendix:DR_basis}

Let $\{\psi_i\}_{i=1}^\infty$ denote the Demmler-Reinsch basis of
${\cal{W}}_\beta(M)$, such that it holds
\begin{equation}\label{eq:def_demmler_basis_functions1}
\int_0^1\psi_{\beta,i}(x)\psi_{\beta,j}(x)dx=\delta_{ij}=\nu_{\beta,i}^{-1}\int_0^1\psi_{\beta,i}^{(\beta)}(x)\psi_{\beta,j}^{(\beta)}(x)dx.
\end{equation}
Hence, any $f\in{\cal{W}}_\beta(M)$ can be represented as 
$f=\sum_{i=1}^\infty \theta_{\beta,i}\psi_{\beta,i}$ for
$\theta_{\beta,i}=\int_0^1f(x)\psi_{\beta,i}(x)dx$, and
$\|f^{(\beta)}\|^2=\sum_{i=1}^\infty \theta_{\beta,i}^2\nu_{\beta,i}<M^2$. We also
define Sobolev spaces for real-valued $\beta$s via
\beq
\label{eq:genSobolev}
{\cal{W}}_\beta(M)=\{f:\;f\in {\cal{C}}^{\lfloor\beta\rfloor-1}[0,1],\;\|f^{(\beta)}\|^2=\sum_{i=1}^\infty \theta_{\beta,i}^2\nu_{\beta,i}^{\beta/\lfloor\beta\rfloor}<M^2\}.
\eeq

The Demmler-Reinsch basis of the natural spline space of degree $2q-1$
with knots at
observations ${\cal{S}}_{2q-1}(\bm{x})$ is defined via
\begin{equation}\label{eq:def_demmler_basis_functions2}
\sum_{k=1}^n\phi_{q,i}(x_k)\phi_{q,j}(x_k)=\delta_{ij}=\eta_{q,i}^{-1}\int_0^1\phi_{q,i}^{(q)}(x)\phi_{q,j}^{(q)}(x)dx.
\end{equation}
With the Demmler-Reinsch basis the smoother matrix can be represented as
$
\bm{S}_{\lambda,q} = \bm{\Phi}_q\big\{\bm{I}_n+\lambda
n\diag(\bm{\eta}_q)\big\}^{-1} \bm{\Phi}_q^T,
$
where $\bm{\Phi}_q=\bm{\Phi}_q(\bm{x}) = [\phi_{q,1}(\bm{x}), \dots,
\phi_{q,n}(\bm{x})] =
[\phi_{q,j}(x_i)]_{i,j=1}^n$. For the eigenvalues $\bm{\eta}_q$, Theorem 2.2. in \cite{Speckman:1985} establishes the following
approximation
\begin{equation}\label{eq:diagonalisation}
\eta_{q,1}=0,\; \dots,\; \eta_{q,q}=0,\quad n\eta_{q,i}=\pi^{2q}
(i-q)^{2q}\{1+o(1)\},\; i=q+1,\dots,n,
\end{equation}
where the $o(1)$ term is uniform over $i=o\{n^{2/(2q+1)}\}$, as $n\rightarrow\infty$.

For $q=\beta$ 
in \cite{Speckman:1985} is shown that
$\sqrt{n}\phi_{\beta,i}=\psi_{\beta,i}\{1+o(1)\}$, $n\eta_{\beta,i}=\nu_{\beta,i}\{1+o(1)\}$,
$n^{-1}B_{\beta,i}^2=\theta_{\beta,i}^2\{1+o(1)\}$, as well as $n^{-1}\sum_{i=\beta+1}^nB_{\beta,i}^2n\eta_{\beta,i}=
\|f^{(\beta)}\|^2\{1+o(1)\}$, where
$B_{\beta,i}=\sum_{j=1}^n\phi_{\beta,j}(x_i)f(x_i)$. 
In case $\beta\neq q$, that is
$f\in{\cal{W}}_\beta(M)$, but the Demmler-Reinsch basis of degree $q$
is used, we find in a similar fashion
\beqn
\frac{1}{n} \sum_{i=q+1}^n {B_{q,i}^2 (n \eta_{q,i})^{\min(\beta,q)/q}
}=\|f^{(\min(\beta,q))}\|^2\{1+o(1)\}.
\eeqn

The estimating equation $T_\lambda(\lambda,q)$ defined in Section~\ref{sec:empirical_bayes} is derived in the same way as those in \cite{Krivobokova:2013} using the fact that $d\bm{S}_{\lambda,q}/d\lambda=-\lambda^{-1}(\bm{S}_{\lambda,q}-\bm{S}_{\lambda,q}^2)$; $T_q(\lambda, q)$ follows in the same way with a few extra remarks.
Firstly, to determine the derivatives of eigenvalues $\eta_{q,i}$ with
respect to $q$ we employ
representation~\eqref{eq:diagonalisation}. Secondly, we use that
$\sum_{i=1}^nX_{q,i}^2=\bm{Y}^t\bm{\Phi}_q\bm{\Phi}_q^t\bm{Y}=\sum_{i=1}^nY_i^2$,
which is independent of $q$ and implies that the contribution of
$\partial X_i^2/\partial q$ is negligible. This can be seen as follows. 
For $j=j(n)\propto
\lambda^{-\alpha/(2q)}$ with any $\alpha\in(0,1)$, so that $\lambda n\eta_{j}=o(1)$, similar to the
proof of Lemma 2 in \cite{Krivobokova:2013} follows
\beqn
\frac{1}{n}\sum_{i=1}^n\frac{\partial X_{q,i}^2}{\partial q}\frac{\lambda
  n\eta_{q,i}}{1+\lambda n\eta_{q,i}}&=&\frac{1}{n}\sum_{i=1}^n\frac{\partial
  X_{q,i}^2}{\partial q}-\frac{1}{n}\sum_{i=1}^n\frac{\partial
  X_{q,i}^2}{\partial q}\frac{1}{1+\lambda n\eta_{q,i}}\\
&=&\frac{1}{n}\sum_{i=j+1}^n\frac{\partial
  X_{q,i}^2}{\partial q}\frac{\lambda n\eta_{q,i}}{1+\lambda n\eta_{q,i}}\{1+o(1)\},
\eeqn
that is, $\partial X_{q,i}^2/\partial q$ contributes only to the tail
part of the sum. Next, observe that
$X_{q,i}/\sqrt{n}=B_{q,i}/\sqrt{n}+O_p(\sigma/\sqrt{n})$, so that
\beqn
\frac{1}{n}\sum_{i=j+1}^n\frac{\partial
  X_{q,i}^2}{\partial q}\frac{\lambda n\eta_{q,i}}{1+\lambda
  n\eta_{q,i}}&=&\frac{2\sigma}{\sqrt{n}}\sum_{i=j+1}^n\frac{\partial
  B_{q,i}/\sqrt{n}}{\partial q}\left\{O_p(1)+ B_{q,i}/\sigma\right\}\frac{\lambda n\eta_{q,i}}{1+\lambda
  n\eta_{q,i}}.
\eeqn
Since $n^{-1}\sum_{i=1}^nB_{q,i}^2n\eta_{q,i}<\infty$, we can bound
$|B_{q,i}|/\sqrt{n}\leq
{\mbox{const}}(i-q)^{-(2q+1+\epsilon)/2}$ for some $\epsilon>0$ and for
$i>j$ it follows that $B_{q,i}=O(1)$ for
$\lambda=O\left(n^{-2q/\{\alpha(2q+1+\epsilon)\}}\right)$, which
includes $\lambda_q$ for a suitable choice of
$\alpha\in(0,1)$. Since $(\lambda n\eta_{q,i})^{\delta}/(1+\lambda
n\eta_{q,i})\leq 1$, for some $\delta\in[0,1]$, 
showing that $|\partial B_{q,i}/\partial q|=O\{\log(n)\}|B_{q,i}|$
will imply that 
\beqn
\left|\frac{1}{n}\sum_{i=j+1}^n\frac{\partial
  X_{q,i}^2}{\partial q}\frac{\lambda n\eta_{q,i}}{1+\lambda
  n\eta_{q,i}}\right|&\leq&\frac{2\sigma\log(n)}{\sqrt{
  n}}\sum_{i=j+1}^n\left|\frac{B_{q,i}}{\sqrt{n}}\right|(\lambda n\eta_{q,i})^{{\min(\beta,q)}/{2q}}O_p(1)\\
&=&\frac{\lambda^{\min(\beta,q)/q}\log(n)}{\sqrt{\lambda^{\min(\beta,q)/q} n}}o_p(1).
\eeqn
To see that $|\partial B_{q,i}/\partial q|=O\{\log(n)\} |B_{q,i}|$, note that
for $i>q$
\beqn
B_{q,i}=\sum_{j=1}^nf(x_j)\phi_{q,i}(x_j)=\sum_{j=1}^nf(x_j)\sum_{k<j}\alpha_{i,k}(x_j-x_k)^{2q-1},
\eeqn
for some suitable $\alpha_{i,k}$, $k<j$, since $\phi_i(x_j)$ is a spline
function of degree $2q-1$, which is orthogonal to a polynomial
space. Taking the derivative with respect to $q$, gives the result. 
 We conclude that
the contribution of $\partial X_{q,i}^2/\partial q$ is negligible and
therefore subindices $q$ in $X_{q,i}$ and $B_{q,i}$ will be
subsequently omitted.
\subsection{Auxiliary lemmas}\label{appendix:lemmas}

\begin{lemma}\label{lemma:trace}
Let $\bm{S}_{\lambda,q}$ be the smoother matrix.
Then, for all $q>1/2$, $l\in\mathbb{N}$, $m\in\mathbb{N}\cup\{0\}$, and $\lambda$ such that $\lambda\to0$, $n\lambda\to\infty$, as $n\to\infty$,
\[
\tr\big\{(\bm{I}_n-\bm{S}_{\lambda,q})^m\bm{S}_{\lambda,q}^l\big\} =
\sum_{i=1}^n \frac{(\lambda n\eta_{q,i})^m}{\big(1+\lambda n\eta_{q,i}\big)^{m+l}} =
\lambda^{-1/(2q)} \kappa_q\big(m,l\big)\{1+o(1)\},
\]
where the $o(1)$ term depends only on $m$ and $l$, and the constants $\kappa_q\big(m,l\big)$ are
\begin{equation}\label{eq:trace_constant}
\kappa_q\big(m,l\big) = \frac{\Gamma\{m+1/(2q)\}\, \Gamma\{l-1/(2q)\}}{2\pi q\, \Gamma(l+m)}\cdot
\end{equation}
\end{lemma}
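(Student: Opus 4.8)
The plan is to establish the exact trace identity first, then pass to the asymptotic evaluation of the resulting sum via an integral approximation.

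\textbf{Step 1: The exact identity.} Using the Demmler--Reinsch representation $\bm{S}_{\lambda,q} = \bm{\Phi}_q\{\bm{I}_n + \lambda n\diag(\bm{\eta}_q)\}^{-1}\bm{\Phi}_q^T$ with $\bm{\Phi}_q$ orthogonal, both $\bm{S}_{\lambda,q}$ and $\bm{I}_n - \bm{S}_{\lambda,q}$ are simultaneously diagonalised. The $i$-th eigenvalue of $\bm{S}_{\lambda,q}$ is $(1 + \lambda n\eta_{q,i})^{-1}$, hence the $i$-th eigenvalue of $\bm{I}_n - \bm{S}_{\lambda,q}$ is $\lambda n\eta_{q,i}/(1 + \lambda n\eta_{q,i})$. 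Since the trace is the sum of eigenvalues and $(\bm{I}_n-\bm{S}_{\lambda,q})^m\bm{S}_{\lambda,q}^l$ is diagonalised by the same $\bm{\Phi}_q$, we get
\[
\tr\big\{(\bm{I}_n-\bm{S}_{\lambda,q})^m\bm{S}_{\lambda,q}^l\big\} = \sum_{i=1}^n \frac{(\lambda n\eta_{q,i})^m}{(1+\lambda n\eta_{q,i})^{m+l}},
\]
which is the first equality, with the terms $i=1,\dots,q$ vanishing (when $m\ge1$) because $\eta_{q,i}=0$ there.

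\textbf{Step 2: Asymptotic evaluation.} Insert the approximation $n\eta_{q,i} = \pi^{2q}(i-q)^{2q}\{1+o(1)\}$ from~\eqref{eq:diagonalisation}. Writing $g(t) = t^m/(1+t)^{m+l}$, the sum becomes $\sum_{i=q+1}^n g(\lambda\pi^{2q}(i-q)^{2q})$ up to the $o(1)$ perturbation in the argument. I would approximate this Riemann-sum-style: substituting $x = \lambda^{1/(2q)}\pi(i-q)$, the spacing is $\Delta x = \lambda^{1/(2q)}\pi$, so the sum is $\lambda^{-1/(2q)}\pi^{-1}\int_0^\infty g(x^{2q})\,dx\,\{1+o(1)\}$. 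The integral is evaluated by the further substitution $u = x^{2q}$, giving $\frac{1}{2q}\int_0^\infty \frac{u^{m + 1/(2q) - 1}}{(1+u)^{m+l}}\,du = \frac{1}{2q}B\{m + 1/(2q),\, l - 1/(2q)\}$, which is exactly the Beta integral producing $\kappa_q(m,l)$ in~\eqref{eq:trace_constant} after writing the Beta function in terms of Gammas (this requires $l > 1/(2q)$, guaranteed by $l\ge1$ and $q>1/2$).

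\textbf{Main obstacle.} The delicate point is controlling the error in replacing the sum by the integral, because the approximation~\eqref{eq:diagonalisation} is only uniform for $i = o\{n^{2/(2q+1)}\}$, whereas the sum runs up to $i=n$. The resolution is that $g(t)$ decays like $t^{-l}$ for large $t$, so the tail of the sum over $i \gg \lambda^{-1/(2q)}$ contributes negligibly (of smaller order than $\lambda^{-1/(2q)}$) as long as $\lambda\to0$ with $n\lambda\to\infty$; one splits the sum at an index $i_n$ with $\lambda^{-1/(2q)} \ll i_n \ll n^{2/(2q+1)}$, applies~\eqref{eq:diagonalisation} on the main part, and bounds the remainder crudely using monotonicity of $\eta_{q,i}$ and the summability of $g$ along the tail. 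The uniformity of the $o(1)$ in $m,l$ follows from tracking these bounds, which depend on $g$ only through the exponents $m,l$ and not on $\lambda$ or $n$. A Euler--Maclaurin or monotone-comparison argument makes the Riemann-sum step rigorous on the main part.
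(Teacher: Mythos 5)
The paper does not actually prove this lemma --- its ``proof'' is a pointer to \cite{Krivobokova:2013} --- so your self-contained derivation is welcome, and it follows exactly the route one would expect: simultaneous diagonalisation in the Demmler--Reinsch basis for the exact trace identity, then the eigenvalue approximation \eqref{eq:diagonalisation} plus a Riemann-sum/Beta-integral computation. Your Step 2 is correct, including the substitution $u=x^{2q}$ giving $\frac{1}{2q}B\{m+1/(2q),\,l-1/(2q)\}$ and the observation that $l\ge1$, $q>1/2$ guarantee convergence; the handful of terms with $\eta_{q,i}=0$ (relevant when $m=0$) contribute only $O(1)$ and are absorbed into the $o(1)$.

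The one step that needs strengthening is the tail bound in your ``main obstacle'' paragraph. Monotonicity of $\eta_{q,i}$ alone only gives each tail term the bound $g(\lambda n\eta_{q,i})\le(\lambda n\eta_{q,i_n})^{-l}$, hence a tail contribution of order $n(\lambda i_n^{2q})^{-l}$, and this can fail to be $o(\lambda^{-1/(2q)})$ in the admissible regime $\lambda\asymp 1/n$: for $q=l=1$ and $i_n$ just below $n^{2/3}$ the bound is of order $n^{2/3}$, while $\lambda^{-1/2}\asymp n^{1/2}$. What you actually need for ``summability of $g$ along the tail'' is a \emph{global} polynomial lower bound $n\eta_{q,i}\gtrsim (i-q)^{2q}$ valid for all $i\le n$, not just on the range where \eqref{eq:diagonalisation} is uniform; such two-sided global bounds are available (they go back to Utreras and are contained in the eigenvalue results of \citealp{Speckman:1985}). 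With that in hand the tail sum is bounded by a constant times $\lambda^{-l}i_n^{1-2ql}$, and since $(l-1/(2q))/(2ql-1)=1/(2q)$ your own requirement $i_n\gg\lambda^{-1/(2q)}$ is exactly what makes it $o(\lambda^{-1/(2q)})$; the window $\lambda^{-1/(2q)}\ll i_n\ll n^{2/(2q+1)}$ is nonempty precisely because $q>1/2$ and $n\lambda\to\infty$. So the argument is sound once the monotonicity appeal is replaced by (or supplemented with) the global eigenvalue growth bound; as written, that sentence hides the only genuinely delicate point of the proof.
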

\begin{proof}
See \cite{Krivobokova:2013}.
\end{proof}

\begin{lemma}\label{lemma:trace_2}
Consider $\bm{\eta}_{q,i}$ as in~\eqref{eq:diagonalisation}.
Then, as $n\to\infty$ such that $\lambda\to0$ for all  $m\in\mathbb{N}\cup\{0\}$, $r,l\in\mathbb{N}$, $q>1/2$
\beqn
\sum_{i=q+1}^n \frac{(\lambda n\eta_{q,i})^m \{\log(n\eta_{q,i})\}^r}{\big(1+\lambda n\eta_{q,i}\big)^{m+l}}
=\lambda^{-1/(2q)}\{\log(1/\lambda)\}^r\kappa_q(m,l)\{1+o(1)\}.
\eeqn
\end{lemma}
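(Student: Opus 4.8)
The plan is to reduce the claimed asymptotic to the analogue in Lemma~\ref{lemma:trace} by controlling the logarithmic factor. First I would use the approximation $n\eta_{q,i}=\pi^{2q}(i-q)^{2q}\{1+o(1)\}$ from~\eqref{eq:diagonalisation}, so that $\log(n\eta_{q,i}) = 2q\log(i-q) + 2q\log\pi + o(1)$, and the dominant contribution to the sum comes from indices $i$ with $\lambda n\eta_{q,i}\asymp 1$, i.e.\ $i-q\asymp\lambda^{-1/(2q)}$. For such indices $\log(i-q) = -\tfrac{1}{2q}\log\lambda + O(1) = \tfrac{1}{2q}\log(1/\lambda)\{1+o(1)\}$, whence $\log(n\eta_{q,i}) = \log(1/\lambda)\{1+o(1)\}$ on the range of $i$ that matters.

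Next I would make this rigorous by a dyadic/Riemann-sum splitting of the index range. Writing $x_i = \lambda n\eta_{q,i}$, the sum is essentially $\sum_i \frac{x_i^m(\log x_i/\lambda)^r}{(1+x_i)^{m+l}}$ up to the $2q\log\pi$ shift and $o(1)$ corrections inside the logarithm. I would split into the three regimes $x_i = o(1)$, $x_i \asymp 1$, and $x_i \to\infty$. In the middle regime $\log(n\eta_{q,i}) = \log(1/\lambda) + O(1)$ directly, and there are $\asymp\lambda^{-1/(2q)}$ such terms contributing $\asymp\lambda^{-1/(2q)}$, matching Lemma~\ref{lemma:trace}. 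For the two tail regimes one bounds $\{\log(n\eta_{q,i})\}^r$ by $\{\log(1/\lambda)\}^r$ times a slowly varying correction and checks, exactly as in the proof of Lemma~\ref{lemma:trace}, that these ranges contribute only a lower-order fraction of $\lambda^{-1/(2q)}\{\log(1/\lambda)\}^r$; the extra $\{\log(n\eta_{q,i})\}^r$ factor grows only polylogarithmically and is harmless against the polynomial convergence/divergence of $(1+x_i)^{-(m+l)}$ and $x_i^m$ in the tails. Equivalently, one can factor $\{\log(n\eta_{q,i})\}^r = \{\log(1/\lambda)\}^r\bigl(1 + \tfrac{2q\log(i-q)-\log(1/\lambda)/1 + O(1)}{\log(1/\lambda)}\bigr)^r$ and absorb the bracketed deviation, which is $o(1)$ uniformly on $i-q = \lambda^{-\theta/(2q)}$ for any fixed $\theta$ bounded away from $0$ and from the relevant truncation point, into the error term, reducing everything to Lemma~\ref{lemma:trace}.

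The main obstacle I anticipate is handling the very small and very large indices where $\log(n\eta_{q,i})$ is \emph{not} comparable to $\log(1/\lambda)$: for $i-q$ of order $1$ we have $\log(n\eta_{q,i}) = O(1)$, not $\log(1/\lambda)$, and for $i-q$ of order $n$ it is $\log n \gg \log(1/\lambda)$ when $\lambda$ decays faster than polynomially. One must therefore check that the contribution of the smallest block is negligible because the summand is bounded (only $O(1)$ terms, each $O(1)$, versus $\lambda^{-1/(2q)}\{\log(1/\lambda)\}^r\to\infty$), and that the largest block is negligible because $(1+\lambda n\eta_{q,i})^{-(m+l)}$ decays like $(i-q)^{-2q(m+l)}\lambda^{-(m+l)}$ fast enough to beat the $(\log n)^r$ growth — here one uses $l\ge 1$ so the tail sum converges, and the uniformity statement in~\eqref{eq:diagonalisation} is only needed for $i = o(n^{2/(2q+1)})$, which comfortably covers the bulk at $i-q\asymp\lambda^{-1/(2q)}$ whenever $\lambda\gtrsim n^{-1}$. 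Assembling these three estimates and invoking Lemma~\ref{lemma:trace} for the leading term gives the claim.
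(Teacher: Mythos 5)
Your argument is correct and is essentially the proof the paper intends: the paper's own proof just says the result ``follows the same lines as Lemma~\ref{lemma:trace}'', i.e.\ the eigenvalue-approximation/Riemann-sum computation, and your reduction — localising at $i-q\asymp\lambda^{-1/(2q)}$ where $\log(n\eta_{q,i})=\log(1/\lambda)\{1+o(1)\}$, and showing the remaining blocks contribute lower order — is exactly how the extra logarithmic factor is absorbed there. One cosmetic caveat: for $m=0$ the constant $\kappa_q(0,l)$ is carried by the whole range $i-q\lesssim\lambda^{-1/(2q)}$ rather than only $\lambda n\eta_{q,i}\asymp1$, so the lower cutoff in your middle regime must be a slowly vanishing multiple of $\lambda^{-1/(2q)}$ (and the upper one slowly diverging), which your three-regime splitting already accommodates.
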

\begin{proof}
Follows the same lines as Lemma \ref{lemma:trace}.
\end{proof}

\begin{lemma}\label{lemma:quadratic_forms}
Let $m\in\mathbb{N}$, $q>1/2$, and $\lambda\rightarrow0$ such that $\lambda n\rightarrow\infty$.
If $f\in\mathcal{W}_q$,
\beq
\label{eq:quadratic_form_low_q}
\frac1n \sum_{i=q+1}^n \frac{B_i^2 \lambda n\eta_{q,i}
\log(\lambda n\eta_{q,i})}{\big(1+\lambda n\eta_{q,i}\big)^m}
= -\lambda\log(1/\lambda)\|f^{(q)}\|^2\{1+o(1)\}.
\eeq
If $f\in\mathcal{W}_\beta\cap\mathcal{M}$, but $f\not\in\mathcal{W}_{\beta+\epsilon}$, $\epsilon>0$, and $q>\beta>1/2$, then 
\beq
\label{eq:quadratic_form_up_q}
\frac1n \sum_{i=q+1}^n \frac{B_i^2 \lambda n\eta_{q,i}
\log(\lambda n\eta_{q,i})}{\big(1+\lambda n\eta_{q,i}\big)^m} \ge c \lambda^{\beta/q},
\eeq
\beq
\label{eq:quadratic_form_up_q_2}
\frac1n \sum_{i=q+1}^n \frac{B_i^2 \lambda n\eta_{q,i}}{\big(1+\lambda n\eta_{q,i}\big)^m} \ge c \lambda^{\beta/q},
\eeq
for some sufficiently small constant $c>0$.
If $f$ is a polynomial of degree $d-1$ (such that $f\in\mathcal{P}_{d}$ and $f\not\in\mathcal{P}_{d-1}$) and $q\ge d$ then
\beq
\label{eq:quadratic_form_up_q_poly}
\frac1n \sum_{i=q+1}^n \frac{B_i^2 \lambda n\eta_{q,i}
\log(\lambda n\eta_{q,i})}{\big(1+\lambda n\eta_{q,i}\big)^m} =0.
\eeq
\end{lemma}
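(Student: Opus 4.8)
The plan is to treat the three regimes of the lemma separately; in each of them one passes to sums over the Demmler--Reinsch eigenvalues via $n\eta_{q,i}=\pi^{2q}(i-q)^{2q}\{1+o(1)\}$ (equation~\eqref{eq:diagonalisation}) and splits every sum at the effective-dimension threshold $j^*\asymp\lambda^{-1/(2q)}$, where $\lambda n\eta_{q,i}\asymp1$: for $i\le j^*$ the denominators $(1+\lambda n\eta_{q,i})^m$ are bounded, while for $i>j^*$ they supply the decay that renders tails negligible.

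For the asymptotic identity~\eqref{eq:quadratic_form_low_q} I would write $\log(\lambda n\eta_{q,i})=-\log(1/\lambda)+\log(n\eta_{q,i})$ and split the left-hand side into a main term $-\lambda\log(1/\lambda)\,\frac1n\sum_{i=q+1}^nB_i^2 n\eta_{q,i}(1+\lambda n\eta_{q,i})^{-m}$ and a remainder $\lambda\,\frac1n\sum_{i=q+1}^nB_i^2 n\eta_{q,i}\log(n\eta_{q,i})(1+\lambda n\eta_{q,i})^{-m}$. Using the identity $\frac1n\sum_{i=q+1}^nB_i^2 n\eta_{q,i}=\|f^{(q)}\|^2\{1+o(1)\}$ recorded in Section~\ref{appendix:DR_basis}, together with the fact that its tail $\frac1n\sum_{i>J}B_i^2 n\eta_{q,i}$ is $o(1)$ uniformly in $n$ as $J\to\infty$, a standard truncation argument shows the main term equals $-\lambda\log(1/\lambda)\|f^{(q)}\|^2\{1+o(1)\}$, since $(1+\lambda n\eta_{q,i})^{-m}\to1$ for each fixed $i$ and the range $i>j^*$ only contributes through that tail. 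For the remainder, the part with $i$ up to any fixed $J$ is $O_J(1)$, the band $J<i\le j^*$ is controlled by $\log(n\eta_{q,i})\le\log(1/\lambda)+O(1)$ there and the uniform smallness of the tail, and the range $i>j^*$ is negligible because $t\mapsto t^{-m}\log t$ is bounded on $t\ge1$; hence the remainder is $o\{\lambda\log(1/\lambda)\}$, which yields~\eqref{eq:quadratic_form_low_q}. The polynomial case~\eqref{eq:quadratic_form_up_q_poly} is immediate: if $f$ is a polynomial of degree $d-1\le q-1$, then $f(\bm{x})$ lies in the span of the null-space Demmler--Reinsch vectors $\phi_{q,1}(\bm{x}),\dots,\phi_{q,q}(\bm{x})$ (those with $\eta_{q,i}=0$), so the discrete orthonormality $\sum_k\phi_{q,i}(x_k)\phi_{q,j}(x_k)=\delta_{ij}$ gives $B_i=\sum_k\phi_{q,i}(x_k)f(x_k)=0$ for every $i\ge q+1$, and the sum in~\eqref{eq:quadratic_form_up_q_poly} vanishes term by term.

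For the two lower bounds~\eqref{eq:quadratic_form_up_q} and~\eqref{eq:quadratic_form_up_q_2}, where $q>\beta$, $f\in\mathcal{W}_\beta\cap\mathcal{M}$ and $f\notin\mathcal{W}_{\beta+\epsilon}$ for every $\epsilon>0$, I would discard all terms except those in a single block $i\in[j_1,\rho j_1]$, with $j_1\asymp\lambda^{-1/(2q)}$ chosen (via~\eqref{eq:diagonalisation}) so that $e\le\lambda n\eta_{q,i}\le e\rho^{2q}$ throughout the block; there $\log(\lambda n\eta_{q,i})\ge1$ and $\lambda n\eta_{q,i}(1+\lambda n\eta_{q,i})^{-m}\ge c_3>0$, so the left-hand side of either inequality is at least $c_3\,\frac1n\sum_{i=j_1}^{\rho j_1}B_i^2$. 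The polished-tail condition~\eqref{eq:self_similar} then gives $\frac1n\sum_{i=j_1}^{\rho j_1}B_i^2\ge L^{-1}\,\frac1n\sum_{i\ge j_1}B_i^2$, so it remains to establish the scale-uniform tail bound $\frac1n\sum_{i\ge j}B_i^2\ge c'\,j^{-2\beta}$ for $j$ in the relevant range; since $j_1\asymp\lambda^{-1/(2q)}$, this is exactly $\gtrsim\lambda^{\beta/q}$, which is the required inequality. This tail bound is where $f\notin\mathcal{W}_{\beta+\epsilon}$ enters: grouping the coefficients into $\rho$-adic blocks and using that~\eqref{eq:self_similar} forces each block energy $\frac1n\sum_{i=\rho^k j}^{\rho^{k+1}j}B_i^2$ to be comparable to the remaining tail $\frac1n\sum_{i\ge\rho^k j}B_i^2$, one checks that a tail decaying faster than $j^{-2\beta}$ by any fixed positive power would force $\|f^{(\beta+\epsilon)}\|^2<\infty$ for some $\epsilon=\epsilon(L,\rho)>0$, which is impossible by assumption; hence no such decay occurs.

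The main obstacle is precisely this last step: turning the qualitative hypotheses ``$f\in\mathcal{M}$'' and ``$f\notin\mathcal{W}_{\beta+\epsilon}$ for all $\epsilon>0$'' into the quantitative, scale-uniform lower bound $\frac1n\sum_{i\ge j}B_i^2\ge c\,j^{-2\beta}$; this is close in spirit to the polished-tail estimates of~\cite{Szabo:2013}, and it is what makes the constant $c$ in~\eqref{eq:quadratic_form_up_q} and~\eqref{eq:quadratic_form_up_q_2} depend on $L,N,\rho$ (and on $f$ only through $\|f^{(\beta)}\|$). The remaining ingredient---the uniform tail control behind the $o\{\lambda\log(1/\lambda)\}$ remainder in~\eqref{eq:quadratic_form_low_q}---is routine bookkeeping of the kind already carried out in~\cite{Krivobokova:2013}.
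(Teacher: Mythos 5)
Your treatment of \eqref{eq:quadratic_form_low_q} and \eqref{eq:quadratic_form_up_q_poly} is fine and close in spirit to the paper's (the paper proves the first by splitting at $j\asymp\lambda^{-\alpha/(2q)}$, sandwiching between $\lambda\log\lambda\,\|f^{(q)}\|^2$ and $(1-\alpha)\lambda\log\lambda\,\|f^{(q)}\|^2$ and letting $\alpha\to0$; it handles the polynomial case via $\|f^{(q)}\|=0$, and your exact discrete-orthogonality argument giving $B_i=0$ for $i>q$ is if anything cleaner). The problems are in the middle part. First, for \eqref{eq:quadratic_form_up_q} you propose to ``discard all terms except those in a single block $[j_1,\rho j_1]$''. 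Discarding is legitimate for \eqref{eq:quadratic_form_up_q_2}, whose terms are all nonnegative, but in \eqref{eq:quadratic_form_up_q} every index with $\lambda n\eta_{q,i}<1$ contributes a \emph{negative} term, and these cannot be thrown away when proving a lower bound. Controlling them is precisely where the hypothesis $f\in\mathcal{W}_\beta(M)$ enters in the paper: it analyses $g_r(x)=x^r\log(x)/(1+x)^m$ with $r=1-\beta/q$, isolates a window around its minimum whose left endpoint $j_{-2}\to\infty$, and uses the vanishing tail of $\frac1n\sum_i B_i^2(n\eta_{q,i})^{\beta/q}$ to show the negative contribution is $-o(\lambda^{\beta/q})$. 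This refinement matters: a crude bound only gives $-O(M^2)\,\lambda^{\beta/q}$, which could swamp the small, $L$- and $f$-dependent constant $c$ you extract from the positive block.

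Second, and more seriously, the ``scale-uniform tail bound'' $\frac1n\sum_{i\ge j}B_i^2\ge c'j^{-2\beta}$ on which your lower bounds hinge does not follow from the hypotheses. Take coefficients with $\frac1n\sum_{i\ge j}B_i^2\asymp j^{-2\beta}(\log j)^{-2}$: each $\rho$-adic block is then a fixed fraction of the tail, so \eqref{eq:self_similar} holds; summing $B_i^2 i^{2\beta}/n$ over dyadic blocks gives $\sum_k k^{-2}<\infty$, so $f\in\mathcal{W}_\beta$; and $\sum_k 2^{2k\epsilon}k^{-2}=\infty$ shows $f\notin\mathcal{W}_{\beta+\epsilon}$ for every $\epsilon>0$; yet the tail is $o(j^{-2\beta})$, so no constant $c'$ works. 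Your own sketch only excludes tails that are smaller by a fixed \emph{polynomial} factor, exactly as you say, and that is not enough to produce the bound you need at the scale $j_1\asymp\lambda^{-1/(2q)}$. The paper's route is genuinely different at this point: it never asserts a pointwise tail lower bound, but instead bounds the block sum below by (a constant times) $j_1^{-2(\beta+\epsilon)}\cdot\frac1n\sum_{i\ge j_1}B_i^2(n\eta_{q,i})^{(\beta+\epsilon)/q}$, uses the divergence of this weighted tail (coming from $f\notin\mathcal{W}_{\beta+\epsilon}$) to offset the extra factor $\lambda^{\epsilon/q}$, and then lets $\epsilon\to0$ along a suitable sequence to recover a small multiple of $\lambda^{\beta/q}$. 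To repair your argument you would need some version of this $\epsilon\to0$ device (or a strictly stronger self-similarity-type assumption); the polished-tail condition together with non-membership in all higher Sobolev balls does not by itself give $\frac1n\sum_{i\ge j_1}B_i^2\gtrsim j_1^{-2\beta}$.
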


\begin{proof}
We remind that if $f\in\mathcal{W}_q$ then
\[
\frac1n \sum_{i=q+1}^n B_i^2 n \eta_{q,i} = \|f^{(q)}\|^2\{1+o(1)\}<\infty.
\]
Obviously, if $f$ is a polynomial of degree $d-1$, $d\in\mathbb{N}$ ($f\in\mathcal{P}_{d}$, $f\not\in\mathcal{P}_{d-1}$), then $\|f^{(q)}\|=0$, $q\ge d$ and~\eqref{eq:quadratic_form_up_q_poly} follows.

To prove \eqref{eq:quadratic_form_low_q} let $j=j_n(\alpha)$ be like in the proof of Lemma 2 in \cite{Krivobokova:2013}; more specifically, take $j=\pi^{-1}\lambda^{-\alpha/(2q)}$ for $\alpha\in(0,1)$, so that $\lambda n\eta_{q,j}=o(1)$;
following the proof of that lemma, for each $\alpha\in(0,1)$,
\[
\frac1n \sum_{i=q+1}^n \frac{B_i^2 \lambda n\eta_{q,i} \log(\lambda n\eta_{q,i})}{\big(1+\lambda n\eta_{q,i}\big)^m} = 
-\frac\lambda{n} \Big[\sum_{i=q+1}^j B_i^2  n\eta_{q,i}\log\frac1{\lambda n\eta_{q,i}} \{1+o(1)\} -
O\Big(\sum_{i=j+1}^n B_i^2 n\eta_{q,i}\Big) \Big].
\]
If $f\in\mathcal{W}_q$ such that $n^{-1}{\sum}_{i=q+1}^n B_i^2 n\eta_{q,i} = \|f^{(q)}\|^2\{1+o(1)\} <\infty$, whence for any choice of $j\to\infty$ the second term on the right-hand-side is negligible compared to the first.
Conclude that for each $\alpha\in(0,1)$,
\[
\lambda\log\lambda \|f^{(q)}\|^2\{1+o(1)\} \le
\!\frac1n\! \sum_{i=q+1}^n\!\! \frac{B_i^2 \lambda n\eta_{q,i} \log(\lambda n\eta_{q,i})}{\big(1+\lambda n\eta_{q,i}\big)^m} \le
(1-\alpha)\lambda\log\lambda \|f^{(q)}\|^2\{1+o(1)\},
\]
(for an $o(1)$ term which is uniform over $\alpha$) such that~\eqref{eq:quadratic_form_low_q} follows by taking $\alpha\to0$.

Finally, we establish~\eqref{eq:quadratic_form_up_q}.
Let $f\in\mathcal{W}_\beta(M)$, $f\not\in\mathcal{W}_{\beta+\epsilon}$, $\epsilon>0$, $q>\beta+\epsilon$.
Consider the function
\[
g_r(x) = \frac{x^r \log(x)}{(1+x)^m}, \quad r\in(0,1),\; x>0.
\]
This continuous function is negative on the interval $(0,1)$ and positive on $(1,\infty)$.
It converges to zero when $x\to0$, has a global minimum at some point $m_r\in(0,1)$, is zero at $x=1$, has a global maximum at $M_r\in(1,\infty)$, and converges monotonically to zero as $x\to\infty$.
Let $j_{-2}\le j_{-1} \le j_0 \le j_1 \le j_2$ be sequences of integers depending on $n$ such that:
$g_{1-\beta/q}(\lambda n\eta_{q,j_{-2}}) = o(1)$ and
$g_{1-\beta/q}(\lambda n\eta_{q,j_{-1}}) = o(1)$ with $\lambda n\eta_{q,j_{-2}}<m_{1-\beta/q}<\lambda n\eta_{q,j_{-1}}$ and $j_{-2}\to\infty$;
$\lambda n\eta_{q,j_0} < 1 \le \lambda n\eta_{q,j_0+1}$; and for some (small) constant $0<c<g_1(M_1)$,
$g_1(\lambda n\eta_{q,i})\ge c$, $i=j_1, \dots, j_2$, such that $g_1(\lambda n\eta_{q,j_1-1})<c$, $g_1(\lambda n\eta_{q,j_2+1})<c$.
(Note that $m_r$, $M_r$, and the sequences $j_k$, $k=-2\dots,2$ depend on $r$ and $m$.)

Splitting the summation along these sequences, and using the bounds above
\begin{align*}
&\frac1n \sum_{i=q+1}^{j_0} \frac{B_i^2 \lambda n\eta_{q,i} \log(\lambda n\eta_{q,i})}{\big(1+\lambda n\eta_{q,i}\big)^m} \ge\\&\quad\quad\ge
-\lambda^{\beta/q}\frac1n\Big\{o(1)\sum_{i\in I} B_i^2 (n\eta_{q,i})^{\beta/q}
-g_{1-\beta/q}(m_{1-\beta/q})\sum_{i=j_{-2}+1}^{j_{-1}} B_i^2 (\lambda n\eta_{q,i})^{\beta/q} \Big\},
\end{align*}
where $I=\{q+1,\dots,j_{-2}\}\cup\{j_{-1},\dots,j_0\}$.
Since by definition $f\in\mathcal{W}_{\beta}$, then we have $n^{-1}{\sum}_{i=q+1}^nB_i^2 (\lambda n\eta_{q,i})^{\beta/q} = \|f^{(\beta)}\|^2\{1+o(1)\}<\infty$, so, since $j_{-2}\to\infty$ we conclude that the lower bound in the previous display is $- o(\lambda^{\beta/q})\|f^{(\beta)}\|^2$, which lower bounds the sum of the negative terms.
As for the sum of the positive terms,
\[
\frac1n \sum_{i=j_0+1}^n \frac{B_i^2 \lambda n\eta_{q,i} \log(\lambda n\eta_{q,i})}{\big(1+\lambda n\eta_{q,i}\big)^m} \ge
\frac1n \sum_{i=j_1}^{j_2} \frac{B_i^2 \lambda n\eta_{q,i} \log(\lambda n\eta_{q,i})}{\big(1+\lambda n\eta_{q,i}\big)^m} \ge
\frac cn \sum_{i=j_1}^{j_2} B_i^2.
\]
Simple computations show we can take $O(\lambda^{-1/(2q)})=j_1\le j_2/\rho$, for any $\rho>2$.
By the polished-tail condition the previous display can further be lower bounded by
\[
\frac cn\sum_{i=j_1}^{\rho j_1} B_i^2 \ge
\frac {c\, (n\eta_{q,\rho j_1})^{-\frac{\beta+\epsilon}q}}n \sum_{i=j_1}^{\rho j_1}B_i^2(n\eta_{q,i})^{\frac{\beta+\epsilon}q} \ge
\frac {O(j_1^{-2(\beta+\epsilon)})}{n L} \sum_{i=j_1}^{n}B_i^2(n\eta_{q,i})^{\frac{\beta+\epsilon}q}.
\]
The sum ${\sum}_{i=j_1}^{n}B_i^2(n\eta_{q,i})^{\frac{\beta+\epsilon}q}/n$ converges to infinity as long as $j_1=o(n)$, $\epsilon>0$.
The power $j_1^{-2(\beta+\epsilon)}$ is $o(\lambda^{\beta/q})$ for every $\epsilon>0$ but it is $O(\lambda^{\beta/q})$ if $\epsilon=0$.
Conclude that for small enough $\epsilon$, namely such that $j_1^{-2(\beta+\epsilon)}$ is of a larger order than the $o(\lambda^{\beta/q})$ term from the bound on the negative terms, the sum of the positive terms dominates.
In fact, picking an appropriate sequence $\epsilon\to0$ one can show that the sum of the positive terms is lower bounded by a small enough multiple of $\lambda^{\beta/q}.$
The bound in~\eqref{eq:quadratic_form_up_q} follows.
The bound in~\eqref{eq:quadratic_form_up_q_2} follows in the same way by adjusting the definition of the sequences $j_1$ and $j_2$.
\end{proof}

\begin{lemma}[Lemma 3 from~\citealp{Krivobokova:2013}]\label{lemma:quad_trace_at_oracle}
Let $\lambda\to0$ be such that $n\lambda\to\infty$.
Then, for any $l\in\mathbb{N}$,
\begin{align*}
\bm{f}^T(\bm I_n-\bm S_{\lambda,q})\bm S_{\lambda,q}^{1+l}\bm f|_{\lambda=\lambda_q} &=
\sigma^2 \tr(\bm S_{\lambda,q}^2)|_{\lambda=\lambda_q}\{1+o(1)\},\\
\bm{f}^T(\bm I_n-\bm S_{\lambda,q})^{1+l}\bm S_{\lambda,q}\bm f|_{\lambda=\lambda_q} &=
o(\lambda_q^{-1/(2q)}).
\end{align*}
\end{lemma}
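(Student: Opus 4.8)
The plan is to diagonalise all the quadratic forms with the Demmler--Reinsch basis, reduce the two displayed identities to a single scalar estimate, and then establish that estimate by a dyadic splitting of the resulting eigen-sum. Abbreviate $\bm{S}:=\bm{S}_{\lambda_q,q}$, write $\bm{B}=\bm{\Phi}_q^T\bm{f}$, and set $\mu_i:=\lambda_q n\eta_{q,i}$; since $\eta_{q,1}=\dots=\eta_{q,q}=0$ by~\eqref{eq:diagonalisation}, for all integers $m,l\ge1$,
\[
\bm{f}^T(\bm{I}_n-\bm{S})^m\bm{S}^l\bm{f}=\sum_{i=q+1}^n B_i^2\,\frac{\mu_i^m}{(1+\mu_i)^{m+l}}.
\]
I work in the regime in which the Bayes oracle has the usual rate, namely $f\in\mathcal{W}_q(M)$ with $\|f^{(q)}\|>0$, so that $\lambda_q\asymp n^{-2q/(2q+1)}$ and hence $\lambda_q n\asymp\lambda_q^{-1/(2q)}\to\infty$. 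The lemma will follow once I show
\[
R_n:=\bm{f}^T(\bm{I}_n-\bm{S})^2\bm{S}\bm{f}=\sum_{i=q+1}^n B_i^2\,\frac{\mu_i^2}{(1+\mu_i)^3}=o\big(\lambda_q^{-1/(2q)}\big);
\]
I refer to this as $(\star)$.

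Granting $(\star)$, the two displays follow easily. Since $\mu/(1+\mu)\in[0,1]$, the $i$-th term of $\bm{f}^T(\bm{I}_n-\bm{S})^{1+l}\bm{S}\bm{f}$ is $B_i^2\mu_i^2(1+\mu_i)^{-3}\{\mu_i/(1+\mu_i)\}^{l-1}$, at most the $i$-th term of $R_n$, so that form is $\le R_n=o(\lambda_q^{-1/(2q)})$, which is the second display. For the first, the identity $\bm{I}_n-\bm{S}^l=(\bm{I}_n-\bm{S})\sum_{k=0}^{l-1}\bm{S}^k$ gives $\bm{f}^T(\bm{I}_n-\bm{S})\bm{S}\bm{f}-\bm{f}^T(\bm{I}_n-\bm{S})\bm{S}^{1+l}\bm{f}=\sum_{k=0}^{l-1}\bm{f}^T(\bm{I}_n-\bm{S})^2\bm{S}^{1+k}\bm{f}$, and each summand on the right is $\le R_n$ (divide the $i$-th term of $R_n$ by $(1+\mu_i)^k\ge1$), so the left side is $o(\lambda_q^{-1/(2q)})$. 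By the oracle equation~\eqref{eq:EElambda}, $\bm{f}^T(\bm{I}_n-\bm{S})\bm{S}\bm{f}=\sigma^2\tr(\bm{S}^2)+o(1)$, and by Lemma~\ref{lemma:trace}, $\tr(\bm{S}^2)=\lambda_q^{-1/(2q)}\kappa_q(0,2)\{1+o(1)\}\asymp\lambda_q^{-1/(2q)}$; combining, $\bm{f}^T(\bm{I}_n-\bm{S})\bm{S}^{1+l}\bm{f}=\sigma^2\tr(\bm{S}^2)\{1+o(1)\}$, the first display.

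It remains to prove $(\star)$, which I do by splitting the eigen-sum at $\mu_i=1$. On $\{\mu_i>1\}$, use $\mu^2(1+\mu)^{-3}\le\mu^{-1}$ and then $(n\eta_{q,i})^{-1}\le\lambda_q^2\,n\eta_{q,i}$ (valid because $n\eta_{q,i}>1/\lambda_q$ there) to bound the block by $\lambda_q^{-1}\sum_{\mu_i>1}B_i^2/(n\eta_{q,i})\le\lambda_q\sum_{i>i_0}B_i^2 n\eta_{q,i}$, where $i_0\asymp\lambda_q^{-1/(2q)}$ is the index at which $\mu_i\asymp1$. On $\{\mu_i\le1\}$, fix $\e\in(0,1)$ and split once more: the part with $\mu_i\le\e$ is $\le\e\sum_i B_i^2\mu_i=\e\lambda_q n\|f^{(q)}\|^2\{1+o(1)\}$, and the part with $\e<\mu_i\le1$, using $\mu^2(1+\mu)^{-3}\le\tfrac{4}{27}$ and $(n\eta_{q,i})^{-1}\le\lambda_q/\e$, is $\le\tfrac{4}{27}\e^{-1}\lambda_q\sum_{i>i_\e}B_i^2 n\eta_{q,i}$ with $i_\e\asymp\e^{1/(2q)}\lambda_q^{-1/(2q)}$. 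Both tail sums are $o(n)$ for every such growing cut-off: $n^{-1}\sum_{i=q+1}^n B_i^2 n\eta_{q,i}\to\|f^{(q)}\|^2$, and because over $i=o(n^{2/(2q+1)})$ the Speckman approximations $n\eta_{q,i}=\nu_{q,i}\{1+o(1)\}$, $n^{-1}B_i^2=\theta_{q,i}^2\{1+o(1)\}$ are uniform, $n^{-1}\sum_{q<i\le j}B_i^2 n\eta_{q,i}=\sum_{q<i\le j}\theta_{q,i}^2\nu_{q,i}\{1+o(1)\}\to\|f^{(q)}\|^2$ whenever $j=j(n)\to\infty$ with $j=o(n^{2/(2q+1)})$, whence $n^{-1}\sum_{i>j}B_i^2 n\eta_{q,i}\to0$. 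Hence the $\{\mu_i>1\}$ block is $\lambda_q\cdot o(n)=o(\lambda_q^{-1/(2q)})$, and $\limsup_n(\lambda_q n)^{-1}\sum_{\mu_i\le1}B_i^2\mu_i^2(1+\mu_i)^{-3}\le\e\|f^{(q)}\|^2$ for every $\e$, so this block too is $o(\lambda_q^{-1/(2q)})$; adding them yields $(\star)$.

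The crux is $(\star)$, and within it the ``moderate frequency'' band $\e<\mu_i\le1$: there the kernel $\mu\mapsto\mu^2(1+\mu)^{-3}$ is bounded away from zero, so the cheap bound $\mu^2(1+\mu)^{-3}\le\mu$ is too lossy and one must instead use that $f\in\mathcal{W}_q$ forces the Demmler--Reinsch energy $\sum_{i>j}B_i^2 n\eta_{q,i}$ beyond any growing index $j$ to be $o(n)$; this estimate, together with the joint passage $j\to\infty$, $\e\to0$ carried out in the right order, is the only genuinely delicate point, and it is where the uniformity of the Speckman approximations over $i=o(n^{2/(2q+1)})$ enters.
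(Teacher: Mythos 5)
Your proof is essentially correct, but note that the paper does not prove this statement at all: it imports it verbatim as Lemma~3 of Krivobokova and Krivobokova--Kneib--Claeskens (2013) (``\citealp{Krivobokova:2013}''), so your argument is a self-contained substitute rather than a variant of an in-paper proof. What you do is in the same spirit as the paper's own toolkit: you diagonalise in the Demmler--Reinsch basis, reduce both displays to the single estimate $\bm{f}^T(\bm I_n-\bm S_{\lambda_q,q})^2\bm S_{\lambda_q,q}\bm f=o(\lambda_q^{-1/(2q)})$, obtain the first display from the defining oracle equation~\eqref{eq:EElambda} together with Lemma~\ref{lemma:trace} (so that the constant $\sigma^2\tr(\bm S_{\lambda_q,q}^2)$ comes for free from $\mathbb{E}T_\lambda(\lambda_q,q)=0$, with no circularity since~\eqref{eq:EElambda} rests only on Lemmas~\ref{lemma:trace} and~\ref{lemma:trace_2}), and prove the key estimate by splitting the eigen-sum at $\lambda_q n\eta_{q,i}\asymp\e$ and $\asymp 1$ and using that the Sobolev energy $n^{-1}\sum_{i>j}B_i^2n\eta_{q,i}$ beyond any growing index $j=o(n^{2/(2q+1)})$ vanishes; this head/tail device is exactly the one the paper itself uses in the proof of Lemma~\ref{lemma:quadratic_forms}, and your cut-offs $\asymp\lambda_q^{-1/(2q)}$ stay inside the range where Speckman's approximations are valid. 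Two caveats you should make explicit. First, you prove the lemma under $f\in\mathcal{W}_q(M)$ with $\|f^{(q)}\|>0$ and $\lambda_q\asymp n^{-2q/(2q+1)}$; this matches the setting of the cited source, but the paper later invokes the lemma also for penalty orders $q$ that may exceed the smoothness of $f$ (Sections~\ref{appendix:coverage:check_S3} and~\ref{appendix:GCV}), a regime your argument does not cover. Second, your tail claim leans on the approximation $n^{-1}B_i^2=\theta_{q,i}^2\{1+o(1)\}$ holding uniformly over $i=o\{n^{2/(2q+1)}\}$, which the paper states only loosely; this is the same level of rigour the paper itself adopts, but it is the one place where your proof inherits an assumption rather than verifying it.
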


\begin{lemma}\label{lemma:variance}
Let $T_{C_p}(\lambda)$ be the estimating equation for $\lambda_f$ as defined in~\cite{Krivobokova:2013}, $\lambda_f$ the solution to $\mathbb{E}T_{C_p}(\lambda)=0$, and $\hat\lambda_f$ the solution to $T_{C_p}(\lambda)=0$.
Then
\[
\mathbb{E}\big\{\lambda_f^{-1/(4q)}(\hat\lambda_f/\lambda_f-1)\big\}^2 \le
\frac{4\kappa_q(4,2)}{\big\{ 4\kappa_q(1,2)-3\kappa_q(1,3) \big\}^2}\{1+o(1)\},
\]
such that in particular $\mathbb{V}\big(\hat\lambda_f/\lambda_f-1\big) = o(1)$.
\end{lemma}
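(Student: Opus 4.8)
Write $Z_n = \lambda_f^{-1/(4q)}\big(\hat\lambda_f/\lambda_f-1\big)$. Theorem~3 of~\cite{Krivobokova:2013} already establishes that $Z_n$ converges in distribution to a centred Gaussian with variance $v := 2\kappa_q(4,2)\big/\big\{4\kappa_q(1,2)-3\kappa_q(1,3)\big\}^2$; the content of the present lemma is to upgrade this to a bound on $\mathbb{E}Z_n^2$, the extra factor of two over $v$ leaving room for the remainder terms. The plan is a standard $M$-estimation linearisation carried out on the event on which $\hat\lambda_f$ is close to $\lambda_f$, combined with a crude deterministic bound on $Z_n^2$ together with a quantitative consistency estimate on the complementary event.

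First I would recall from~\cite{Krivobokova:2013} the eigenvalue representation of $T_{C_p}(\lambda)$ as a linear combination of the $X_i^2$ with deterministic weights that are smooth functions of $\lambda n\eta_{q,i}$, and, writing $X_i=B_i+\sigma\xi_i$ with $\xi_i$ i.i.d.\ standard Gaussian, use Lemmas~\ref{lemma:trace}--\ref{lemma:quad_trace_at_oracle} to compute, at $\lambda=\lambda_f$, the deterministic quantity
\[
\lambda_f\,\frac{\partial}{\partial\lambda}\,\mathbb{E}T_{C_p}(\lambda)\Big|_{\lambda=\lambda_f} = -\tfrac12\,\sigma^2\lambda_f^{-1/(2q)}\big\{4\kappa_q(1,2)-3\kappa_q(1,3)\big\}\{1+o(1)\}
\]
and the variance of the Gaussian quadratic form
\[
\mathbb{V}\big\{T_{C_p}(\lambda_f)\big\} = 2\,\sigma^4\lambda_f^{-1/(2q)}\kappa_q(4,2)\{1+o(1)\},
\]
the latter being twice the sum of the squares of the relevant weights, again evaluated by Lemma~\ref{lemma:trace}. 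I would also record a Gaussian concentration inequality for $T_{C_p}(\lambda)-\mathbb{E}T_{C_p}(\lambda)$, uniformly over $\lambda$ in a neighbourhood of $\lambda_f$: since the number of ``active'' eigenvalues is of order $\lambda_f^{-1/(2q)}\to\infty$, this yields that $\mathbb{P}\big(|\hat\lambda_f/\lambda_f-1|>\delta\big)$, as well as the probability that $\lambda\,\partial_\lambda T_{C_p}(\lambda)$ differs from its deterministic limit by more than a factor $2$ for some $|\lambda-\lambda_f|\le\delta\lambda_f$, both decay faster than any fixed power of $n$.

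On the event $A_n=\{|\hat\lambda_f/\lambda_f-1|\le\delta\}$, a Taylor expansion of $T_{C_p}$ together with $\mathbb{E}T_{C_p}(\lambda_f)=0$ and $T_{C_p}(\hat\lambda_f)=0$ gives
\[
\hat\lambda_f-\lambda_f = -\,\frac{T_{C_p}(\lambda_f)}{\partial_\lambda T_{C_p}(\bar\lambda)}, \qquad |\bar\lambda-\lambda_f|\le\delta\lambda_f,
\]
and, after multiplying by $\lambda_f^{-1/(4q)-1}$ and replacing, via stochastic equicontinuity, $\partial_\lambda T_{C_p}(\bar\lambda)$ by $\partial_\lambda\mathbb{E}T_{C_p}(\lambda_f)\{1+o_P(1)\}$, one finds on $A_n$ that $Z_n = -\lambda_f^{-1/(4q)}T_{C_p}(\lambda_f)\big/\big\{\lambda_f\,\partial_\lambda\mathbb{E}T_{C_p}(\lambda_f)\big\}\cdot\{1+o_P(1)\}$. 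Combining the two displays of the previous paragraph then gives $\mathbb{E}\big[Z_n^2\mathbf{1}_{A_n}\big]\le 2v\,\{1+o(1)\}$, where the factor of two absorbs the $o_P(1)$ remainders once the denominator has been restricted, by the concentration inequality, to within a factor $2$ of its limit. On the complement, since $\hat\lambda_f\in[1/n,1]$ by construction and $\lambda_f\asymp n^{-2q/(2\beta+1)}$, one has the deterministic bound $Z_n^2\le\lambda_f^{-1/(2q)-2}$, a fixed power of $n$, so $\mathbb{E}\big[Z_n^2\mathbf{1}_{A_n^c}\big]\le\lambda_f^{-1/(2q)-2}\,\mathbb{P}(A_n^c)=o(1)$. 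Adding the two contributions yields the displayed inequality, and finally $\mathbb{V}\big(\hat\lambda_f/\lambda_f-1\big)\le\mathbb{E}\big(\hat\lambda_f/\lambda_f-1\big)^2 = \lambda_f^{1/(2q)}\mathbb{E}Z_n^2 = O\big(\lambda_f^{1/(2q)}\big)=o(1)$ since $\lambda_f\to0$.

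The main obstacle is the step controlling $\mathbb{E}[Z_n^2\mathbf{1}_{A_n^c}]$ and the random denominator on $A_n$ in an $L^2$ rather than merely in-probability sense: convergence in distribution is not enough, and one must verify that the probability of $\hat\lambda_f$ straying from $\lambda_f$ decays faster than the polynomial-in-$n$ worst-case value of $Z_n^2$, which is exactly where a genuinely quantitative large-deviation bound for the relevant Gaussian quadratic forms is required.
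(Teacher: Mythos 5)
Your overall architecture (linearise on a good event, crude polynomial bound times a fast-decaying probability on the bad event) is a legitimate route, but as written it has a genuine gap at exactly the point you flag yourself: the claim that $\mathbb{P}\big(|\hat\lambda_f/\lambda_f-1|>\delta\big)$, and the probability that $\lambda\,\partial_\lambda T_{C_p}(\lambda)$ strays from its deterministic limit uniformly over a neighbourhood, decay faster than any fixed power of $n$. Nothing of this sort is available from the cited results: \cite{Krivobokova:2013} gives consistency and a CLT for $\hat\lambda_f$, i.e.\ $\mathbb{P}(A_n^c)\to0$ at an unquantified rate, while your bad-event bound $Z_n^2\le C\lambda_f^{-1/(2q)-2}$ is a fixed power of $n$, so the product is not $o(1)$ without the super-polynomial tail. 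Supplying that tail is not a routine invocation of Gaussian concentration: since $\hat\lambda_f$ is defined as a zero of the criterion over the whole range $[1/n,1]$, you would need (i) a Hanson--Wright-type bound for the quadratic form $T_{C_p}(\lambda)-\mathbb{E}T_{C_p}(\lambda)$ made uniform in $\lambda$ (chaining or monotonicity), and (ii) a quantitative separation bound showing $|\mathbb{E}T_{C_p}(\lambda)|$ stays bounded away from the relevant noise level for all $\lambda$ with $|\lambda/\lambda_f-1|>\delta$, neither of which you establish. A secondary slip: your stated value $\lambda_f\,\partial_\lambda\mathbb{E}T_{C_p}(\lambda_f)=-\tfrac12\sigma^2\lambda_f^{-1/(2q)}\{4\kappa_q(1,2)-3\kappa_q(1,3)\}\{1+o(1)\}$ carries a spurious factor $\tfrac12$ (and drops the $1/n$ normalisation); taken literally it would inflate the final constant to $8\kappa_q(4,2)/\{4\kappa_q(1,2)-3\kappa_q(1,3)\}^2$ and also contradict the limiting variance in Theorem~3 of \cite{Krivobokova:2013}.

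It is worth noting that the paper's proof is built precisely to avoid the step you are missing. Instead of solving the linearisation for $\hat\lambda_f-\lambda_f$ and truncating, it starts from the exact identity obtained from $T_{C_p}(\hat\lambda_f)=0$, namely
\[
\mathbb{V}\big\{T_{C_p}(\lambda_f)\big\}=\mathbb{E}\Big\{\big(\hat\lambda_f-\lambda_f\big)^2\,\mathbb{E}\big[T_{C_p}'(\tilde\lambda)^2\mid\hat\lambda_f\big]\Big\},
\]
and then only \emph{lower} bounds the derivative factor, by restricting to the event $\{|\hat\lambda_f/\lambda_f-1|<\epsilon\}$ (whose probability tends to one by the already-known consistency), applying Jensen's inequality conditionally, and choosing $\epsilon$ so the derivative is at least $1/\sqrt2$ times its value at $\lambda_f$ --- whence the factor $4$ in the lemma. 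Because the unknown quantity $(\hat\lambda_f-\lambda_f)^2$ sits on the side being bounded from below, the bad event never has to be weighted against a polynomially large worst case, so no large-deviation or uniform concentration input is needed. If you want to keep your decomposition, you must actually prove the super-polynomial tail bound; otherwise the cleaner fix is to reorganise the argument along the paper's second-moment identity.
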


\begin{proof}
By definition of $\hat\lambda_f$, for some $\tilde\lambda$ between $\hat\lambda_f$ and $\lambda_f$,
\[
0 = T_{C_p}(\hat\lambda_f) = T_{C_p}(\lambda_f) + T_{C_p}'(\tilde\lambda) \big(\hat\lambda_f-\lambda_f\big).
\]
It is known (cf.\ the supplementary materials to~\citealp{Krivobokova:2013}) that,
\begin{align*}
\mathbb{V}\big\{T_{C_p}(\lambda_f)\big\}		&= 2\frac{\sigma^4}{n^2} \kappa_q(4,2) \lambda_f^{-1/(2q)}\{1+o(1)\},\\
\mathbb{E}\big\{T_{C_p}'(\tau\lambda_f)\big\}	&= \mathbb{E}\big\{T_{C_p}'(\lambda_f)\big\} \Big\{ 1 + \frac{2q-1}{2q}O(\tau-1) \Big\}\\
									&=
\frac{\sigma^2}n \lambda_f^{-1-1/(2q)}\big(4\kappa_q(1,2)-3\kappa_q(1,3)\big) \Big\{ 1 + \frac{2q-1}{2q}O(\tau-1) \Big\},
\end{align*}
for $\tau\in[1-\epsilon, 1+\epsilon]$ and small enough $\epsilon>0$.
Let $A(\epsilon)$ be the event $\{\big|\hat\lambda_f/\lambda_f-1\big|<\epsilon\}$, and write $\tilde\lambda=\tau\lambda_f$, so that
\begin{align*}
&2\frac{\sigma^4}{n^2} \kappa_q(4,2) \lambda_f^{-1/(2q)}\{1+o(1)\} =\\ & \quad\quad=
\mathbb{E}\Big\{\big( \hat\lambda_f-\lambda_f \big)^2\mathbb{E}\big[ T_{C_p}'(\tilde\lambda)^2\big| \hat\lambda_f \big]\Big\} \ge
\mathbb{E}\Big\{\big( \hat\lambda_f-\lambda_f \big)^2\mathbb{E}\big[ T_{C_p}'(\tau\lambda_f)^2\big| \hat\lambda_f, A(\epsilon) \big]\mathbb{P}\big(A(\epsilon)\big)\Big\}\\ &\quad\quad =
\mathbb{E}\Big\{\big( \hat\lambda_f-\lambda_f \big)^2\mathbb{E}\big[ T_{C_p}'(\tau\lambda_f)\big|\,A(\epsilon) \big]^2\{1+o(1)\}\Big\}\\ &\quad\quad=
\mathbb{E}\Big\{\big( \hat\lambda_f-\lambda_f \big)^2\mathbb{E}\big[ T_{C_p}'(\lambda_f)\big]^2\mathbb{E}\big[ 1 + \frac{2q-1}{2q}O(\tau-1)\big|\,A(\epsilon) \big]^2\{1+o(1)\}\Big\},
\end{align*}
where we use the fact that by definition $\mathbb{E}\big\{T_{C_p}(\lambda_f)\big\}=0$, and Jensen's inequality.
Since the previous display holds for all small enough $\epsilon$, set $\epsilon$ so that $1 + (2q-1)O(\tau-1)/(2q)\ge 1/\sqrt{2}$, say, whence
\[
\mathbb{E}\big\{\hat\lambda_f/\lambda_f-1\big\}^2 \le
\lambda_f^{1/(2q)}\frac{4\kappa_q(4,2)}{\big\{ 4\kappa_q(1,2)-3\kappa_q(1,3) \big\}^2}\{1+o(1)\} = o(1).
\]
\end{proof}

\subsection[Proof of Theorem~\ref{theorem:consistency_q}]{Proof of Theorem~\ref{theorem:consistency_q}}\label{appendix:proof_consistency_q}

Assume $f\in L_2$.
First, note that for some $\tilde\lambda$ between $\hat\lambda_q$ and $\lambda_q$, a.s.,
\begin{equation}\label{eq:variance_q}
T_q(\hat\lambda_q,q) = T_q(\lambda_q,q) + (\hat\lambda_q-\lambda_q)\left.\frac{\partial T_q(\lambda,q)}{\partial\lambda}\right|_{\lambda=\tilde\lambda}.
\end{equation}
We have
\beqn
{\mathbb{V}}T_q(\lambda_q,q)&=&\frac{2\sigma^2}{q^2n^2}\left[\frac{\{\log(1/\lambda_q)\}^2}{\lambda_q^{1/(2q)}/\kappa_q(2,2)}\sigma^2\{1+o(1)\}+2\sum_{i=q+1}^nB_i^2\left\{\frac{\lambda_qn\eta_{q,i}\log(n\eta_{q,i})}{(1+\lambda_qn\eta_{q,i})^2} \right\}^2 
\right]\\ 
&=&O\left\{\lambda_q^{-1/(2q)}\log(1/\lambda_q)^2n^{-2}\right\}\rightarrow 0,\quad n\rightarrow\infty.
\eeqn

Hence, $T_q(\lambda_q,q)-\mathbb{E}T_q(\lambda_q,q)\stackrel{P}{\longrightarrow}0$ for any $q>1/2$.
In fact, for any $\epsilon>0$, and $\mathbb{Q}_n$ as defined at the end of Section~\ref{sec:empirical_bayes},
\begin{align*}&
\mathbb{P}\Big( \max_{q\in \mathbb{Q}_n}\big|T_q(\lambda_q,q)-\mathbb{E}T_q(\lambda_q,q)\big| >\epsilon \Big) \le 
\mathbb{P}\Big( {\sum}_{q\in \mathbb{Q}_n}\big|T_q(\lambda_q,q)-\mathbb{E}T_q(\lambda_q,q)\big| >\epsilon \Big) \le\\ &
\sum_{q\in \mathbb{Q}_n} \mathbb{P}\Big( \big|T_q(\lambda_q,q)-\mathbb{E}T_q(\lambda_q,q)\big| >\epsilon/|\mathbb{Q}_n| \Big) \le
\frac{|\mathbb{Q}_n|^3w}{\epsilon^2}\max_{q\in \mathbb{Q}_n}\mathbb{V}T_q(\lambda_q,q),
\end{align*}
by Chebyshev's inequality.
We assume without loss of generality that $|\mathbb{Q}_n|= O\{1/\log(n)^\alpha\}$, for some $\alpha>2$ such that the upper-bound in the previous display converges to zero, implying that $T_q(\lambda_q,q)-\mathbb{E}T_q(\lambda_q,q)\stackrel{P}{\longrightarrow}0$ takes place uniformly over $q\in \mathbb{Q}_n$.
Note also that ${\mathbb{V}}T_q(\lambda_q,q) = o\big[\{{\mathbb{E}}T_q(\lambda_q,q)\}^2\big]$.
The remaining term in~\eqref{eq:variance_q} is controlled using the same arguments as Lemma~\ref{lemma:variance} so we omit the details.

Consider a polished tail $f\in\mathcal{L}_2$.
Without loss of generality say that $T_q$, $q\not\in \mathbb{Q}_n$, is obtained by linear interpolation of the values of $T_q$ at $\mathbb{Q}_n$.
From the discussion in Sections~\ref{sec:asymptotics:empirical_bayes_lambda} and~\ref{sec:asymptotics:empirical_bayes_q}, the criterion $\mathbb{E}T_q(\lambda_q,q)$ is strictly negative for $q\le\bar\beta$ and is positive for $q> \bar\beta$, whence
\begin{align*}
&\mathbb{P}\Big\{\max_{q\in \{r\in \mathbb{Q}_n:\, r<\bar\beta\}}T_q(\hat\lambda_q,q)<0, \,
\min_{q\in \{r\in \mathbb{Q}_n:\, r>\bar\beta\}}T_q(\hat\lambda_q,q)>0 \Big\} \le\\ & \quad\quad\quad\quad \le
\mathbb{P}\Big\{
\sup_{q\in(1/2,\, \bar\beta)}T_q(\hat\lambda_q,q)<0, \,
\inf_{q\in(\bar\beta,\, \log(n)]}T_q(\hat\lambda_q,q)>0 \Big\} \le
\mathbb{P}\Big\{\hat q = \bar\beta\Big\},
\end{align*}
The first inequality follows from the fact that we linearly interpolate the criterion, and the second inequality follows from the definition of $\hat q$.
From the computations above, the left-hand-side of the previous display converges to 1 as $n\to\infty$, so that the second statement of the theorem follows.
For $f\in\mathcal{W}_\beta(M)$, $\beta>1/2$, by the same arguments as above,
\begin{align*}
&\mathbb{P}\Big[
\max_{q\in \{r\in \mathbb{Q}_n:\, r<\beta\}} T_q(\hat\lambda_q,q)<0, \,
\min_{q\in \{r\in \mathbb{Q}_n:\, r>\beta\}}T_q(\hat\lambda_q,q)=
o_p\{T_q(\hat\lambda_{\beta},\beta)\} \Big] \le\\ & \quad\le 
\mathbb{P}\Big[
\sup_{q\in(1/2,\, \beta)} T_q(\hat\lambda_q,q)<0, \,
\inf_{q\in(\beta,\, \log(n)]} T_q(\hat\lambda_q,q)=
o_p\{T_q(\hat\lambda_{\beta},\beta)\} \Big] \le 
\mathbb{P}\Big[\beta \le \hat q \le \log(n)\Big],
\end{align*}
and the first statement of the theorem follows since the left-hand-side of the previous display converges to 1, as $n\to\infty$.
The last statement of the theorem follows by using the exact same argument.

\subsection[Proof of Theorem~\ref{theorem:coverage}]{Proof of Theorem~\ref{theorem:coverage}}\label{appendix:coverage}

If we assume that for some countable set $Q_n\in\mathbb{Q}_n$,
\begin{equation}\label{eq:A1}
\inf_{\bm{f}\in\mathcal{W}} \mathbb{P}_f \big( \hat q \in Q_n \big) = 1+o(1),
\end{equation}
then~\eqref{eq:honest_coverage} follows if we show that uniformly over $q\in Q_n$,
\begin{equation}\label{eq:C1}
\inf_{\bm{f}\in\mathcal{W}} \mathbb{P}_f \big\{ \|\bm{f}-\bm{\hat{f}}_{\hat\lambda_{ q}, q}\| \le \hat{\sigma}_{\hat\lambda_{ q}, q}\, L\, r_n(\hat\lambda_{ q},  q) \big\} = 1+o\big(1/|Q_n|\big).
\end{equation}
For each $q\in Q_n$, $f\in\mathcal{W}$, consider intervals $L_q=[\underline{\lambda}_q, \bar\lambda_q]$ -- in fact $L_q(f)=[\underline{\lambda}_q(f), \bar\lambda_q(f)]$ -- with $\lambda_q\in L_q$ and $\bar\lambda_q/\underline{\lambda}_q\to1$, such that
\begin{equation}\label{eq:A2}
\inf_{\bm{f}\in\mathcal{W}} \mathbb{P}_f \big\{ \hat\lambda_q \in L_q(f) \big\} = 1+o(1).
\end{equation}
If there exist constants $C_1,C_2,C_3,C_4$, such that uniformly over $f\in\mathcal{W}$ and $q\in Q_n$,
\begin{align}
\inf_{\lambda\in L_q} r_n^2(\lambda,q) &\ge C_1^2 \frac{\bar\lambda_q^{-1/(2q)}}n, \label{eq:S1}\\
\inf_{f\in\mathcal{W}} \mathbb{P}_f  \big( \inf_{\lambda\in L_q} \hat\sigma_{\lambda,q}^2 \ge C_2^2 \big) &= 1+o(1),\label{eq:S2}\\
\sup_{\lambda\in L_q} \| \mathbb{E}_f \bm{\hat{f}}_{\lambda,q} - \bm{f}\|^2 &\le C_3^2 \frac{\bar\lambda_q^{-1/(2q)}}n,\label{eq:S3}\\
\inf_{f\in\mathcal{W}} \mathbb{P}_f 
\Big( \sup_{\lambda\in L_q} \|\bm{\hat{f}}_{\lambda,q} -\mathbb{E}_f \bm{\hat{f}}_{\lambda,q}\|^2 \le C_4^2 \bar\lambda_q^{-1/(2q)}/n
\Big) &= o\big(1/|Q_n|\big), \label{eq:S4}
\end{align}
then, if $L$ is large enough such that $C_4 + C_3 \le L\, C_2\, C_1$, conclude
\[
\inf_{f\in\mathcal{W}} \mathbb{P}_f \Big\{ \sup_{\lambda\in L_q} \|\bm{\hat{f}}_{\lambda,q} -\mathbb{E}_f \bm{\hat{f}}_{\lambda,q}\| + \sup_{\lambda\in L_q} \|\mathbb{E}_f \bm{\hat{f}}_{\lambda,q}-\bm{f}\| \le \inf_{\lambda\in L_q} \hat\sigma_{\lambda,q} L\, r_n(\lambda,q) \Big\} = 1+o\big(1/|Q_n|\big),
\]
such that by the triangle inequality,~\eqref{eq:C1} holds.\\

If~\eqref{eq:A1} holds, then~\eqref{eq:optimal_radius} follows if we show that uniformly over $q\in Q_n$,
\begin{equation}\label{eq:S5}
\inf_{f\in\mathcal{W}_\beta(M)}\mathbb{P}_f \big\{ r_n(\hat\lambda_q,q) \le Kn^{-\beta/(2\beta+1)} \big\} =
1+ o\big(1/|Q_n|\big).
\end{equation}
Since the risk of the smoothing spline satisfies~\eqref{eq:risk}, it is only possible to establish~\eqref{eq:S5} if $q\ge\beta$.
Consider $s_n(\beta)=\max\{q\in\mathbb{Q}_n: q\le\beta\}$.
By the first and second statements of Theorem~\ref{theorem:consistency_q} we conclude that if we take $Q_n=\mathbb{Q}_n\cap(s_n,\bar\beta+1)$, then $\mathbb{P}\big(\hat q\in Q_n\big)=1+o(1)$, such that every $q\in Q_n$ is such that $q\ge\beta$.
Note also that $\mathbb{Q}_n$ can be taken such that $|Q_n|=o\{\log(n)^2\}$, say.
These facts are used when checking~\eqref{eq:S2}, \eqref{eq:S4}, and~\eqref{eq:S5} below.\\

Condition~\eqref{eq:A2} follows from the consistency results in Section~\ref{sec:asymptotics} so that
it suffices to check conditions~\eqref{eq:S1}--\eqref{eq:S5}, which we do in the remainder of this section. 

\subsubsection[Condition~\eqref{eq:S1}]{Checking condition~\eqref{eq:S1}}\label{appendix:coverage:check_S1}

The sequence $r_n(\lambda,q)$ is implicitly defined by
\[
\mathbb{P}\Big\{ R_n(\lambda,q) \le r_n(\lambda,q)^2 \Big\} = 1-\alpha, \quad\text{with}\quad
R_n(\lambda,q)\sim \frac1N\bm{Z}^T\bm{S}_{\lambda,q}\bm{Z} \sim
\frac1N \sum_{i=q+1}^n\frac{\epsilon_i^2}{1+\lambda n\eta_{q,i}},
\]
where $\bm{Z}\sim N(\bm{0},\bm{I}_n)$ and $N\sim\mathcal{X}_n^2$, with $\bm{Z}$ and $N$ independent, and $\bm{\epsilon}=\bm{\Phi}^T\bm{Z}\sim N(\bm{0},\bm{I}_n)$.
Since $R_n(\lambda,q)$ decreases with $\lambda$, the infimum in~\eqref{eq:S1} is attained at $\lambda=\bar\lambda_q$.
Set $R=R_n(\bar\lambda_q,q)$, $r=r_n(\bar\lambda_q,q)$ and note that by the one-sided Chebychev's inequality, $r^2 \ge \mathbb{E}R - \{(1-\alpha)/\alpha\}^{1/2} (\mathbb{V}R)^{1/2}$.
Further, since $N^{-1}$ is independent of $\bm{\epsilon}$, and has an inverse-chi-squared distribution with $n$ degrees of freedom such that $\mathbb{E}N^{-1}=(n-2)^{-1}$,
\[
\mathbb{E}R =
\mathbb{E}\frac1N\, \sum_{i=q+1}^n\frac{\mathbb{E}\epsilon_i^2}{1+\bar\lambda_q n\eta_{q,i}} =
\frac1{n-2}\tr\big(\bm{S}_{\bar\lambda_q,q}\big) =
\frac{\bar\lambda_q^{-1/(2q)}}{n-2} \kappa_q(0,1)\{1+o(1)\}.
\]
By similar computations, since $\mathbb{V}N^{-1} = 2(n-2)^{-2}(n-4)^{-1}$,
\[
\mathbb{V}R =
3\Big(\mathbb{E}\frac1N\Big)^2 \tr\big(\bm{S}_{\bar\lambda_q,q}^2\big) +
\mathbb{V}\frac1N \tr\big(\bm{S}_{\bar\lambda_q,q}\big)^2 +
3\mathbb{V}\frac1N \tr\big(\bm{S}_{\bar\lambda_q,q}^2\big) =
o\Big\{\big(\mathbb{E}R\big)^2\Big\}.
\]
Conclude that~\eqref{eq:S1} holds for $C_1^2<\kappa_q(0,1)\{1+o(1)\}$.

\subsubsection[Condition~\eqref{eq:S2}]{Checking condition~\eqref{eq:S2}}\label{appendix:coverage:check_S2}

We may take $L_q=[c\bar\lambda_q,\bar\lambda_q/c]$, for some $c = 1+o(1) < 1$.
For $\lambda\in L_q$, we bound, a.s.,
\[
\hat{\sigma}_{\lambda,q}^2 =
\frac1n \bm{Y}^T\big(\bm{I}_n-\bm{S}_{\lambda,q}\big)\bm{Y} \ge
\frac{c^2}n \sum_{i=q+1}^n \frac{X_i^2 \bar\lambda_q n\eta_{q,i}}{1+\bar\lambda_q n\eta_{q,i}} \ge
\frac{c^2}n \bm{Y}^T\big(\bm{I}_n-\bm{S}_{\bar\lambda_q,q}\big)\bm{Y}.
\]
Denote this lower-bound by $S=S_n(\bar\lambda_q,q,c)$.
By Chebyshev's inequality,
\[
\mathbb{P} \Big( \inf_{\lambda\in L_q}\hat{\sigma}_{\lambda,q}^2 \ge C_2^2 \Big) \ge 
1-\mathbb{P} \Big\{ S \le \mathbb{E}S - \big(\mathbb{E}S-C_2^2\big) \Big\} \ge
1- \frac{\mathbb{V}S}{\mathbb{V}S+(\mathbb{E}S-C_2^2)^2}.
\]
Using Lemma~\ref{lemma:trace} we have
\[
\mathbb{E}S =
\frac{c^2}n \Big\{ \sigma^2\tr(\bm{I}_n-\bm{S}_{\bar\lambda_q,q}) + \bm{f}^T(\bm{I}_n-\bm{S}_{\bar\lambda_q,q})\bm{f} \Big\} \ge
c^2\sigma^2 -c^2\sigma^2\frac{\tr(\bm{S}_{\bar\lambda_q,q})}{n} = \sigma^2\{1+o(1)\}.
\]
By Lemma~\ref{lemma:trace}, and the bound on $\bm{f}^T(\bm{I}_n-\bm{S}_{\bar\lambda_q,q})^2\bm{f}$ from the next section,
\[
\mathbb{V}S = \frac1{n^2} \Big[ 2\sigma^4 \tr\big\{(\bm{I}_n-\bm{S}_{\bar\lambda_q,q})^2\big\} + 4\sigma^2 \bm{f}^T(\bm{I}_n-\bm{S}_{\bar\lambda_q,q})^2\bm{f} \Big] = O\Big(\frac{\mathbb{E}S}{n}\Big).
\]
Condition~\eqref{eq:S2} holds by taking $C_2^2 < \sigma^2\{1+o(1)\}$.
Since the bound in previous display is $o\{1/\log(n)^2\}$ we further assume that the bound holds uniformly over $q\in Q_n$.

\subsubsection[Condition~\eqref{eq:S3}]{Checking condition~\eqref{eq:S3}}\label{appendix:coverage:check_S3}

Let $L_q$ be as in the previous section.
For $\lambda\in L_q$ the squared bias term satisfies
\[
\| \mathbb{E} \bm{\hat{f}}_{\lambda,q} - \bm{f}\|^2 =
\frac1n\sum_{i=q+1}^n \frac{B_i^2(\lambda n\eta_{q,i})^2}{(1+\lambda n\eta_{q,i})^2} \le
\frac1n\sum_{i=q+1}^n \frac{B_i^2(\bar\lambda_q n\eta_{q,i})^2}{(1+\bar\lambda_q n\eta_{q,i})^2} \le
\frac1{c^2}\| (\bm{S}_{\lambda_q,q} - \bm{I}_n) \bm{f}\|^2.
\]
By Lemma~\ref{lemma:quad_trace_at_oracle} and simple manipulations,
\[
\| (\bm{S}_{\lambda_q,q} - \bm{I}_n) \bm{f}\|^2 = \frac1n\bm{f}^T(\bm{S}_{\lambda_q,q} - \bm{I}_n)\bm{S}_{\lambda_q,q}\bm{f}
- \frac1n\bm{f}^T(\bm{S}_{\lambda_q,q} - \bm{I}_n)\bm{f} \le \sigma^2\frac{\tr(\bm{S}_{\lambda_q,q})}{n}\{1+o(1)\}.
\]
The statement follows by Lemma~\ref{lemma:trace}, for $C_3^2> \sigma^2\kappa_q(0,1)\{1+o(1)\}$.

\subsubsection[Condition~\eqref{eq:S4}]{Checking condition~\eqref{eq:S4}}\label{appendix:coverage:check_S4}

For $L_q$ as in the previous sections, uniformly over $\lambda\in L_q$ the variance term satisfies, a.s.,
\[
\|\bm{\hat{f}}_{\lambda,q} -\mathbb{E} \bm{\hat{f}}_{\lambda,q}\|^2 =
\|\bm{S}_{\lambda,q}(\bm{Y}-\bm{f})\|^2 =
\frac1n \sum_{i=q+1}^n \frac{(X_i-B_i)^2}{(1+\lambda n\eta_{q,i})^2} \le
\frac1n \sum_{i=q+1}^n \frac{(X_i-B_i)^2}{(1+\underline\lambda_q n\eta_{q,i})^2},
\]
where $\bm{X}-\bm{B}=\bm{\Phi}^T(\bm{Y}-\bm{f})\sim N(\bm{0},\sigma^2\bm{I}_n)$.
Denote the upper-bound in the previous display by $V=V_n(\underline\lambda_q,q)$.
By Lemma~\ref{lemma:trace}, the expectation of $V$ is
\[
\mathbb{E}V = \frac{\sigma^2}n \tr\big(\bm{S}_{\underline\lambda_q,q}^2\big) = 
\frac{\sigma^2}n \underline\lambda_q^{-1/(2q)}\kappa_q(0,2)\{1+o(1)\} =
\sigma^2 c^{-1/q}\kappa_q(0,2)\frac{\bar\lambda_q^{-1/(2q)}}n\{1+o(1)\};
\]
the respective variance is $\mathbb{V}V = 3\sigma^2n^{-2} \tr\big(\bm{S}_{\underline\lambda_q,q}^4\big) = O\big(\mathbb{E}V/n\big)$.
Condition~\eqref{eq:S4} follows by an application of the one-sided Chebyshev's inequality for $C_4^2> \sigma^2 \kappa_q(0,2)\{1+o(1)\}$.
Since the bound in previous display is $o\{1/\log(n)^2\}$ we further assume that the bound holds uniformly over $q\in Q_n$ with an $o(1)$ term that is $o\big(1/|Q_n|\big)$.

\subsubsection[Condition~\eqref{eq:S5}]{Checking condition~\eqref{eq:S5}}\label{appendix:coverage:check_S5}

Note that if~\eqref{eq:A2} holds then it suffices to check
\[
\inf_{f\in\mathcal{W}_\beta(M)}\mathbb{P}_f \Big\{ \sup_{\lambda\in L_q}r_n(\lambda,q) \le Kn^{-\beta/(2\beta+1)} \Big\} = 
\inf_{f\in\mathcal{W}_\beta(M)} 1_{\{D_n(f)\}} = 1+o(1),
\]
where $D_n(f)=\{\sup_{\lambda\in L_q(f)}r_n(\lambda,q) \le Kn^{-\beta/(2\beta+1)}\}$.

Proceeding as in Section~\ref{appendix:coverage:check_S1}, the supremum in $D_n(f)$ is attained at $\underline\lambda_q$ so if we redefine $R=R_n(\underline\lambda_q,q)$ and $r=r_n(\underline\lambda_q,q)$, then by the one-sided Chebyshev's inequality $r^2 \le \mathbb{E}R + \{\alpha/(1-\alpha)\}^{1/2}(\mathbb{V}R)^{1/2}$.
By direct substitution, since $\beta\le q<\bar b$, we have that for sufficiently large $n$
\[
\mathbb{E}R = \frac{\underline\lambda_q^{-1/(2q)}}{n} \kappa_q(0,1)\{1+o(1)\} \le K\, n^{-2\beta/(2\beta+1)}, \quad\text{and}\quad
\mathbb{V}R = o\Big\{\big(\mathbb{E}R\big)^2\Big\},
\]
for constant $K$ depending on $\beta$, $\bar b$, $\sigma^2$, and $M$.
Since the bound on the variance in previous display is $o\{1/\log(n)^2\}$ we further assume that the bound holds uniformly over $q\in Q_n$.
Note that in particular, if $q=\beta$,
\[
\big[\|f^{(\beta)}\|^2/\big\{\sigma^2\kappa_\beta(0,2)\big\}\big]^{1/(2\beta+1)} \le
\big[M^2/\big\{\sigma^2\kappa_\beta(0,2)\big\}\big]^{1/(2\beta+1)} = K,
\]
uniformly over $f\in\mathcal{W}_\beta$.
The statement of the theorem follows.

\subsection[Proof of Theorem~\ref{theorem:GCV}]{Proof of Theorem~\ref{theorem:GCV}}\label{appendix:GCV}

For simplicity, we assume in the following that $\hat{f}_{\lambda,q}$ is independent of $\hat\lambda_f$
(say we make two independent observations at each design point in $\bm{x}$;
this certainly does not impair the performance of the GCV based estimate of $\lambda$, and it considerably simplifies the exposition).

Expanding around the oracle $\lambda_f$ we have, a.s.,
\[
\frac1{(1+n\eta_{q,i}\hat\lambda_f)^{m}} = \frac1{(1+n\eta_{q,i}\lambda_f)^{m}}
-m\frac{n\eta_{q,i}\lambda_f}{(1+n\eta_{q,i}\lambda_f)^{m+1}}\, O\big({\hat\lambda_f}/{\lambda_f}-1\big).
\]
Conclude that
$
\tr\big(\bm{S}_{\hat\lambda_f,q}^m\big) = \tr\big(\bm{S}_{\lambda_f,q}^m\big)
-m\tr\big\{(\bm{I}_n-\bm{S}_{\lambda_f,q})\bm{S}_{\lambda_f,q}^m\big\}\, O\big({\hat\lambda_f}/{\lambda_f}-1\big),
$ 
a.s., and that
$
\bm{f}^T\bm{S}_{\hat\lambda_f,q}^m\bm{f} = \bm{f}^T\bm{S}_{\lambda_f,q}^m\bm{f}
-m\bm{f}^T(\bm{I}_n-\bm{S}_{\lambda_f,q})\bm{S}_{\lambda_f,q}^m\bm{f}\, O\big({\hat\lambda_f}/{\lambda_f}-1\big),
$ 
a.s..

We now determine the asymptotic expectation and variance of risk of $\hat{f}_{\hat\lambda_f,q}$.
From $n\|\bm{\hat f}_{\hat\lambda_f,q}-\bm{f}\|^2 = \bm{Y}^T\bm{S}_{\hat\lambda_f,q}^2\bm{Y} - 2\bm{Y}^T\bm{S}_{\hat\lambda_f,q}\bm{f}+\bm{f}^T\bm{f}$, and the expansions above
\begin{align*}
\mathbb{E}\Big[ n\|\bm{\hat f}_{\hat\lambda_f,q}-\bm{f}\|^2 \mid \hat\lambda_f\Big] &=
\sigma^2 \tr\big( \bm{S}_{\hat\lambda_f,q}^2 \big) + \bm{f}^T\big(\bm{I}_n-\bm{S}_{\hat\lambda_f,q}\big)^2\bm{f} =\\ & = 
\Big\{\sigma^2 \tr\big( \bm{S}_{\lambda_f,q}^2 \big) + \bm{f}^T\big(\bm{I}_n-\bm{S}_{\lambda_f,q}\big)^2\bm{f}\Big\}
\Big\{1+O\Big(\frac{\hat\lambda_f}{\lambda_f}-1\Big)\Big\},\\
\mathbb{V}\Big[ n\|\bm{\hat f}_{\hat\lambda_f,q}-\bm{f}\|^2 \mid \hat\lambda_f\Big] &=
2 \sigma^4 \tr\big( \bm{S}_{\hat\lambda_f,q}^4 \big) + 4 \sigma^2\bm{f}^T\big(\bm{I}_n-\bm{S}_{\hat\lambda_f,q} \big)^2\bm{S}_{\hat\lambda_f,q}^2\bm{f} = \\ & = 
\Big\{2 \sigma^4 \tr\big( \bm{S}_{\lambda_f,q}^4 \big) + 4 \sigma^2\bm{f}^T\big(\bm{I}_n-\bm{S}_{\lambda_f,q} \big)^2\bm{S}_{\lambda_f,q}^2\bm{f}\Big\}\Big\{1+O\Big(\frac{\hat\lambda_f}{\lambda_f}-1\Big)\Big\},
\end{align*}
by Lemma~\ref{lemma:trace}, and by Lemma~\ref{lemma:quad_trace_at_oracle}.
By Lemma~\ref{lemma:variance} we conclude that
\begin{align*}
\mathbb{E}\Big\{\mathbb{E}\Big[\|\bm{\hat f}_{\hat\lambda_f,q}-\bm{f}\|^2 \mid \hat\lambda_f\Big]\Big\} &= \frac1n
\Big\{\sigma^2 \tr\big( \bm{S}_{\lambda_f,q}^2 \big) + \bm{f}^T\big(\bm{I}_n-\bm{S}_{\lambda_f,q}\big)^2\bm{f}\Big\}\{1+o(1)\},\\
\mathbb{V}\Big\{\mathbb{E}\Big[\|\bm{\hat f}_{\hat\lambda_f,q}-\bm{f}\|^2 \mid \hat\lambda_f\Big]\Big\} &= \frac1{n^2}
o\Big\{\sigma^2 \tr\big( \bm{S}_{\lambda_f,q}^2 \big) + \bm{f}^T\big(\bm{I}_n-\bm{S}_{\lambda_f,q}\big)^2\bm{f}\Big\}^2,\\
\mathbb{E}\Big\{\mathbb{V}\Big[\|\bm{\hat f}_{\hat\lambda_f,q}-\bm{f}\|^2 \mid \hat\lambda_f\Big]\Big\} &= \frac1{n^2}
\Big\{2 \sigma^4 \tr\big( \bm{S}_{\lambda_f,q}^4 \big) + 4 \sigma^2\bm{f}^T\big(\bm{I}_n-\bm{S}_{\lambda_f,q} \big)^2\bm{S}_{\lambda_f,q}^2\bm{f}\Big\}\{1+o(1)\},
\end{align*}
so that again by Lemma~\ref{lemma:trace}, and by Lemma~\ref{lemma:quad_trace_at_oracle},
\[
\mathbb{V}\Big(\|\bm{\hat f}_{\hat\lambda_f,q}-\bm{f}\|^2\Big) =
o\Big\{\mathbb{E}\Big(\|\bm{\hat f}_{\hat\lambda_f,q}-\bm{f}\|^2\Big)\Big\}^2.
\]

Now, if $\mathbb{E}\Big(\|\bm{\hat f}_{\hat\lambda_f,q}-\bm{f}\|^2 \Big)> C\sigma^2 r_n(\lambda_\beta,\beta)^2$, $C>1$, then by Chebyshev's inequality,
\begin{align*} 
\mathbb{P}\Big( \bm{f} \in \hat{\mathcal{D}}_n \Big) &=
\mathbb{P}\Big\{ \|\bm{\hat f}_{\hat\lambda_f,q}-\bm{f}\|^2 \le \sigma^2 r_n(\lambda_\beta,\beta)^2 \Big\} \le\\
&\le
\bigg[1 + \Big\{ \mathbb{E}\Big(\|\bm{\hat f}_{\hat\lambda_f,q}-\bm{f}\|^2 \Big) - \sigma^2\, r_n(\lambda_\beta, \beta)^2 \Big\}^2\big/
\mathbb{V}\Big(\|\bm{\hat f}_{\hat\lambda_f,q}-\bm{f}\|^2 \Big)\bigg]^{-1} = o(1).
\end{align*}

From Sections~\ref{appendix:coverage:check_S1} 
, $r_n(\lambda,q)^2 = \kappa_q(0,1)\lambda^{-1/(2q)}/n\{1+o(1)\}$.
By the definition of $\lambda_f$ (cf.~\citealp{Krivobokova:2013}),
\begin{align*}&
\mathbb{E}\Big(\|\bm{\hat f}_{\hat\lambda_f,q}-\bm{f}\|^2\Big) \ge
\frac{\sigma^2}n \tr\big( \bm{S}_{\lambda_f,q}^2 \big) + \frac1n \bm{f}^T\big(\bm{I}_n-\bm{S}_{\lambda_f,q}\big)^2\bm{S}_{\lambda_f,q}\bm{f}\\ & \quad=
\frac{\sigma^2}n \Big\{2 \tr\big( \bm{S}_{\lambda_f,q}^2 \big) - \tr\big( \bm{S}_{\lambda_f,q}^3 \big)\Big\}\{1+o(1)\} =
\frac{\sigma^2}n\Big\{ 2\kappa_q(0,2)-\kappa_q(0,3) \Big\}\lambda_f^{-1/(2q)}\{1+o(1)\}.
\end{align*}
From~\eqref{eq:Bayes_oracle} and~\eqref{eq:GCV_oracle}, 
\[
\lambda_{\beta}^{-1/(2\beta)} \ge \big\{\kappa_q(1,2)\big/\kappa_{\beta}(0,2)\big\}^{1/(2\beta+1)}\lambda_f^{-1/(2q)}\{1+o(1)\}.
\]
By basic properties of the gamma function, for any $\beta>1/2$,
\[
2\kappa_q(0,2)-\kappa_q(0,3) > \kappa_\beta(0,1) \big\{\kappa_q(1,2)\big/\kappa_\beta(0,2)\big\}^{1/(2\beta+1)},
\quad q\in[\beta/2,\,\beta],
\]
so that the probability that $\bm{f}$ belongs to $\hat{\mathcal{D}}_n$ is indeed $o(1)$.

\bibliographystyle{chicago}
{\small \bibliography{empirical_smoothing}}

\end{document}